\documentclass{article}
\usepackage{silence}
\WarningFilter{latex}{You have requested package}
\usepackage{amsmath, amsfonts, amsthm, amssymb, amsrefs, amstext}
\usepackage{comment}
\usepackage{xcolor}
\usepackage{faktor}
\usepackage{tabularx}
\usepackage{makecell}
    \setcellgapes{0.25cm}

\theoremstyle{definition}
\newtheorem{definition}{Definition}[subsection]
    
    \newtheorem{example}[definition]{Example}

    \newtheorem*{recall*}{Recall}
\theoremstyle{plain}
    \newtheorem{theorem}[definition]{Theorem}
    \newtheorem{corollary}[definition]{Corollary}
    \newtheorem{proposition}[definition]{Proposition}
    \newtheorem{lemma}[definition]{Lemma}
    
    \newtheorem{conjecture}[definition]{Conjecture}
    \newtheorem*{lemma*}{Lemma}
\theoremstyle{remark}
    \newtheorem{remark}[definition]{Remark}
    \newtheorem*{claim}{Claim}
    
    \newtheorem*{remark*}{Remark}

\newtheoremstyle{appendix}{}{}{\itshape}{}{\bfseries}{.}{.5em}{#1\thmnote{ #3}}
\theoremstyle{appendix}
\newtheorem*{aplemma}{Lemma}

\DeclareMathOperator{\Z}{\mathbb{Z}}
\DeclareMathOperator{\F}{\mathbb{F}}

\newcommand{\legendre}[2]{\genfrac{(}{)}{}{}{#1}{#2}}

\title{Exploration on Incidence Geometry and Sum-Product Phenomena}
\author{Sung-Yi Liao$^1$}
\date{$^1$National Center for Theoretical Sciences, Taiwan}

\begin{document}

\maketitle
\begin{abstract}
In additive combinatorics, Erd\"{o}s-Szemer\'{e}di Conjecture is an important conjecture. It can be applied to many fields, such as number theory, harmonic analysis, incidence geometry, and so on. Additionally, its statement is quite easy to understand, while it is still an open problem. In this dissertation, we investigate the Erd\"{o}s-Szemer\'{e}di Conjecture and its relationship with several well-known results in incidence geometry, such as the Szemer\'{e}di-Trotter Incidence Theorem. We first study these problems in the setting of real numbers and focus on the proofs by Elekes and Solymosi on sum-product estimates. After introducing these theorems, our main focus is the Erd\"{o}s-Szemer\'{e}di Conjecture in the setting of $\mathbb{F}_p$. We aim to adapt several ingenious techniques developed for real numbers to the case of finite fields. Finally, we obtain a result in estimating the number of bisectors over the ring $\mathbb{Z}/p^3\mathbb{Z}$ with $p$ a $4n+3$ prime.
\end{abstract}
\section{Introduction}
\subsection{General settings}
We first introduce the sum set and product set as follows:
\begin{definition}[sum set, product set]
Given a ring $(R,+,\cdot)$ and two non-empty finite subsets $A,B\subseteq R$, define their \textit{sum set}
\[A+B:=\{a+b\mid a\in A,b\in B\}\]
and their \textit{product set}
\[A\cdot B:=\{a\cdot b\mid a\in A,b\in B\}.\]
\end{definition}
It is obvious that the sizes of the sum set $A+B$ and the product set $A\cdot B$ share the same trivial upper bound $|A||B|$ and trivial lower bound $\max\{|A|,|B|\}$ if both $A$ and $B$ contain at least one non-zero-divisor where we use $|A|$ to denote the size of the set $A$. However, if there exists no finite subring, one expects that the sizes of the sum set or product set are much larger than the trivial lower bound. This motivated Erd\"{o}s and Szemer\'{e}di to make their well-known conjecture in the 1980s. Before we go into more details, we first recall the useful Landau notation and Vinogradov notation defined as follows:
\begin{definition}[asymptotic notations]
Throughout this article, all the quantities are nonnegative. The Landau notation and Vinogradov notation are defined as follows: 
\begin{enumerate}
\item We say $f=O(g)$, or equivalently $f\ll g$, if $\lim\limits_{x\to\infty}\frac{f(x)}{g(x)}<\infty$.
\item We say $f=o(g)$ if $\lim\limits_{x\to\infty}\frac{f(x)}{g(x)}=0$.
\item We say $f=\Omega(g)$, or equivalently $f\gg g$, if $\lim\limits_{x\to\infty}\frac{g(x)}{f(x)}<\infty$.
\item We say $f=\omega(g)$ if $\lim\limits_{x\to\infty}\frac{g(x)}{f(x)}=0$.
\item We say $f=\Theta(g)$ if $\lim\limits_{x\to\infty}\frac{f(x)}{g(x)}\in(0,\infty)$.
\item We say $f\sim g$ if $\lim\limits_{x\to\infty}\frac{f(x)}{g(x)}=1$.
\item We say $f\lesssim g$ if there is an absolute $c\in\mathbb{R}$ such that $f(x)\ll g(x)\log^c(x)$.
\item We say $f\approx g$ if $f\lesssim g$ and $g\lesssim f$.
\end{enumerate}
\end{definition}
\subsection{Erd\"{o}s-Szemer\'{e}di Conjecture}
Before we state the Erd\"{o}s-Szemer\'{e}di Conjecture, let us first take a look at the following examples:
\begin{example}
Let $R=\mathbb{R}$.
\begin{enumerate}
\item Consider $A=B=\{1,2,\dotsc,n\}$, an arithmetic progression of length $n$. Then, one can direct compute to see 
\[|A+B|=\left|\{2,3,\dotsc,2n\}\right|=\Theta(|A|),\]
which achieves the lower bound. However, the size of its product set
\[|A\cdot B| \approx |A|^2,\]
which achieves the upper bound. (This result is known as Erd\"{o}s Multiplicative Table Problem and proved by Erd\"{o}s.)
\item On the other hand, consider $A=B=\{2^1,\dotsc,2^n\}$, a geometric progression. We have 
\[|A+B| \approx |A|^2,\]
which achieves the upper bound and its product set
\[|A\cdot B|=\Theta (|A|),\]
which achieves the lower bound.
\end{enumerate}
\end{example}

Based on the above examples, we can see that there are sets whose either sum-set or product-set almost achieves the upper bound. The well-known conjecture of Erd\"{o}s and Szemer\'{e}di states that not only arithmetic progressions or geometric progressions can have either sum-set or product-set almost attain the upper bound but for any finite sets. More precisely, they made the conjecture below.

\begin{conjecture}[Erd\"{o}s-Szemer\'{e}di Conjecture, \cite{Erdos1983sums}]
For any finite subset $A\subseteq\mathbb{Z}$, one has
\[\max\{|A+A|,|A\cdot A|\}\gg_{\delta} {|A|}^{2-\delta}\]
for any constant $\delta>0$.
\end{conjecture}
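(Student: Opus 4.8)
The plan is to attack the conjecture through incidence geometry, exploiting the fundamental incompatibility between additive and multiplicative structure. The starting observation is that both a small sumset and a small product set can be encoded simultaneously as an abundance of point-line incidences. Concretely, writing $s = |A+A|$ and $p = |A \cdot A|$, the point set $P = (A+A) \times (A \cdot A)$ together with the family of lines $\ell_{a,b} : y = a(x - b)$ indexed by pairs $(a,b) \in A \times A$ produces many incidences, since for each $c \in A$ the point $(b+c, ac)$ lies on $\ell_{a,b}$ and belongs to $P$, so each of the $|A|^2$ lines passes through $|A|$ points of $P$. Feeding the resulting $|A|^3$ incidences into the Szemer\'edi-Trotter theorem yields Elekes' bound $\max\{s,p\} \gg |A|^{5/4}$, and Solymosi's refinement using the monotone ordering of $\Z \subseteq \mathbb{R}$ pushes this to $|A|^{4/3}$ up to logarithmic factors. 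The first step is therefore to reprove these bounds and isolate precisely where each loses ground against the target exponent $2 - \delta$.

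The core strategy for closing the gap is a \emph{self-improvement} scheme phrased in terms of energies. Introduce the additive energy $E_+(A) = |\{(a,b,c,d) : a+b = c+d\}|$ and its multiplicative analogue $E_\times(A)$; by Cauchy-Schwarz one has $|A+A| \geq |A|^4 / E_+(A)$ and likewise for products. The plan is to prove a \emph{structural dichotomy}: either $E_+(A)$ is small, forcing $|A+A|$ near $|A|^2$, or $A$ concentrates on a subset with strong additive structure, in which case the Balog-Szemer\'edi-Gowers theorem supplies $A' \subseteq A$ with $|A'+A'| \ll |A'|^{1+o(1)}$. A set with such near-minimal doubling behaves like a generalized arithmetic progression, and on such sets one aims to show the product set is essentially maximal, $|A' \cdot A'| \gg |A'|^{2-o(1)}$; transferring this back to $A$ through the Ruzsa covering lemma then delivers the desired bound on $\max\{|A+A|, |A\cdot A|\}$.

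The hard part — and here I must be candid, since the statement is an open conjecture — is making this dichotomy \emph{quantitatively} sharp enough to reach the exponent $2 - \delta$ rather than merely $4/3$ or $3/2$. Every incidence-based argument is ultimately limited by the fact that the Szemer\'edi-Trotter bound is tight for grid-like configurations, and every energy-based argument bleeds a power of $|A|$ each time Cauchy-Schwarz converts an energy into a cardinality. The genuine obstacle is thus to control \emph{higher}-order additive and multiplicative energies simultaneously, and to rule out that a single set makes both its $k$-th order additive and multiplicative energies large at once. I expect the crux to be a quantitative rigidity statement — morally, that a dense subset of $A$ looking like an arithmetic progression in several independent additive senses cannot also look like a geometric progression — and it is precisely the absence of such a statement that leaves the full conjecture open.
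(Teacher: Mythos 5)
There is no proof to compare against: the statement you were given is the Erd\H{o}s--Szemer\'edi Conjecture itself, which the paper states as an open problem and never proves. The paper's actual content consists of partial results toward it --- Elekes' $|A|^{5/4}$ bound via Szemer\'edi--Trotter and Solymosi's $|A|^{4/3}$ bound via multiplicative energy, both of which your first paragraph reproduces accurately, plus finite-field analogues. Your proposal is honest that it does not close the gap, and that assessment is correct; but since you were asked for a proof, the verdict must be that the proposal contains a genuine, fatal gap, and it is worth naming exactly where your self-improvement scheme breaks down rather than leaving it at ``the crux is absent.''

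The proposed dichotomy is not actually a dichotomy, and each horn leaks polynomial factors incompatible with the target exponent $2-\delta$. First, Cauchy--Schwarz gives $|A+A|\geq |A|^4/E^+(A)$, so ``small energy'' forces a large sumset only when $E^+(A)\ll |A|^{2+\delta}$; but Balog--Szemer\'edi--Gowers requires $E^+(A)\gg |A|^3/K$ to extract structure, and in the intermediate regime $E^+(A)\sim |A|^{2+c}$ with $0<c<1$ neither horn applies --- this is exactly the regime where the conjecture lives. Second, even when BSG does apply, it returns $A'\subseteq A$ with $|A'|\gg K^{-C}|A|$ and $|A'+A'|\ll K^{C'}|A'|$ for absolute constants $C,C'$; since the relevant $K$ is a small power of $|A|$, any conclusion of the form $|A'\cdot A'|\gg |A'|^{2-o(1)}$ (a statement in the spirit of Chang's theorem, which is only known for bounded doubling $K=O(1)$ over $\mathbb{Z}$) transfers back, through Ruzsa covering with further powers of $K$ lost, to a bound like $|A\cdot A|\gg K^{-C''}|A|^{2-o(1)}$, which is vacuous precisely when $K\sim |A|^{\epsilon}$. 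So the scheme caps out near the known exponents $4/3+o(1)$, and the ``quantitative rigidity statement'' you postulate at the end --- that no set is simultaneously close to additive and multiplicative structure in all higher energies --- is not an auxiliary lemma but a restatement of the conjecture itself; assuming it would make the argument circular. Your diagnosis of the obstruction (tightness of Szemer\'edi--Trotter on grids, energy-to-cardinality losses) is sound and matches the literature the paper surveys, but it is a description of why the problem is open, not a route around it.
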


In order to support their conjecture, they proved the following theorem.

\begin{theorem}[Theorem 1, \cite{Erdos1983sums}]
Given any finite subset $A\subseteq\mathbb{Z}$, we have
\[{|A|}^{1+\epsilon_1}\ll\max\{|A+A|,|A\cdot A|\}\ll {|A|}^2\exp\left(-\epsilon_2\log|A|\log\log|A|\right)\]
for some absolute constants $\epsilon_1,\epsilon_2>0$.
\end{theorem}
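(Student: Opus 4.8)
The plan is to read the two inequalities as separate claims of opposite character and prove them as separate lemmas. The left-hand inequality $|A|^{1+\epsilon_1}\ll\max\{|A+A|,|A\cdot A|\}$ is a universal lower bound holding for \emph{every} finite $A$; the right-hand inequality records that this cannot be pushed up to $|A|^2$, and is witnessed not by all $A$ but by a single near-extremal family, one set of each cardinality, for which both the sum set and the product set are genuinely smaller than $|A|^2$. So I would construct an explicit set for the upper half and argue universally for the lower half.

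For the lower bound write $n=|A|$, $s=|A+A|$, $p=|A\cdot A|$. First I would reduce to $A\subseteq\mathbb{Z}_{>0}$: deleting $0$ and replacing $A$ by the larger of $A\cap\mathbb{Z}_{>0}$ and $-(A\cap\mathbb{Z}_{<0})$ costs at most a factor of two in $n$ and can only shrink $s$ and $p$, since both pieces are subsets of $A$ and negating the negative part preserves products. The baseline estimate comes from noting that the map $(a,b)\mapsto(a+b,\,ab)$ from $A\times A$ into $(A+A)\times(A\cdot A)$ is at most two-to-one, because $a,b$ are the two roots of $t^2-(a+b)t+ab$; hence $\binom{n}{2}\ll sp$ and so $\max\{s,p\}\gg n$. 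To upgrade the exponent from $1$ to $1+\epsilon_1$ I would assume for contradiction that $s,p\le Kn$ with $K=n^{\epsilon_1}$ and use that both operations then have small doubling, so that by the Plünnecke--Ruzsa inequality all iterated sum sets and all iterated product sets of $A$ stay of size $K^{O(k)}n$. The heart of the matter is to play these two structures against one another on a mixed expression such as $A\cdot A+A\cdot A$ and to show, by double counting, that a set this close to being simultaneously additively and multiplicatively closed would behave like a finite subring of $\mathbb{Z}$ --- impossible unless $n$ is bounded; optimizing the number of iterations against $K$ then forces $K\gg n^{\epsilon_1}$ for some explicit, if small, $\epsilon_1>0$. (The later parts of this paper develop the arguments of Elekes and Solymosi, which give this lower bound with the explicit exponents $\epsilon_1=\tfrac14$ and $\epsilon_1=\tfrac13$ respectively; I give the self-contained route here since neither tool has been introduced yet.)

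For the upper bound I would exhibit an explicit near-extremal set. Fix $N$ and a smoothness bound $B$, let $A$ be the set of $B$-smooth integers in $[1,N]$ --- those with no prime factor exceeding $B$ --- and put $u=\log N/\log B$. Two features make this work. Since $A\subseteq[1,N]$ we have $A+A\subseteq[2,2N]$, so $|A+A|\le 2N$; and a product of two $B$-smooth numbers below $N$ is a $B$-smooth number below $N^2$, so $|A\cdot A|\le\Psi(N^2,B)$, where $\Psi$ counts smooth integers. Writing $n=|A|=\Psi(N,B)$ and inserting the standard estimate $\Psi(x,B)=x\,\rho(u)^{1+o(1)}$ with Dickman's $\rho(u)=u^{-u(1+o(1))}$, a short computation yields $|A\cdot A|/n^2\approx 2^{-2u}$. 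I would then choose $B,N$ so that $u$ is as large as the constraint $2N\le|A\cdot A|$ permits, which forces $u\asymp\log n/\log\log n$; whereupon $\max\{|A+A|,|A\cdot A|\}=|A\cdot A|\ll n^2\,2^{-2u}=n^2\exp(-\epsilon_2\log n/\log\log n)$, the exponential saving on the right-hand side of the theorem.

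The main obstacle is the lower bound. The two-to-one observation gives the trivial exponent $1$ for free, but extracting any $\epsilon_1>0$ requires the genuinely quantitative statement that large additive structure and large multiplicative structure cannot coexist on a large subset of $\mathbb{Z}$, and converting that qualitative impossibility into a numerical gain --- without the clean incidence-geometric input of Szemerédi--Trotter --- is where all the difficulty lies. The upper bound, by contrast, reduces to selecting the smooth-numbers construction and optimizing the single parameter $u$.
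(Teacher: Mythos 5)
First, a point of comparison that shapes everything below: the paper never actually proves this theorem. It is quoted from \cite{Erdos1983sums} as motivation, and the only related material proved in the paper is the lower-bound machinery of Section 2 (Theorem \ref{Szemeredi-Trotter} via Appendix A, then Theorem \ref{Elekes} and Solymosi's bound). So your proposal must be judged on its own merits and against that surrounding machinery.

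Your upper-bound half is correct, and it is essentially the original Erd\H{o}s--Szemer\'{e}di construction: take the $B$-smooth integers in $[1,N]$, use $|A+A|\le 2N$ and $|A\cdot A|\le\Psi(N^2,B)$, and run the Dickman-$\rho$ computation $\rho(2u)/\rho(u)^2=2^{-2u(1+o(1))}$ with the optimization $u\asymp\log n/\log\log n$. Two remarks in its favor. Your reading of the right-hand inequality as existential (witnessed by one near-extremal family, not by all $A$) is the only tenable one: the paper's own geometric-progression example in Section 1.2 has $|A+A|\approx|A|^2$ and so violates a universal reading. Moreover, your computation lands on $\exp(-\epsilon_2\log n/\log\log n)$, which silently corrects what must be a typo in the statement as printed: with the product $\log|A|\cdot\log\log|A|$ in the exponent, the right-hand side equals $|A|^{2-\epsilon_2\log\log|A|}$, which for large $|A|$ drops below $2|A|-1\le|A+A|$, so the printed inequality is false under any reading. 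Only the quotient form you derived is provable.

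The genuine gap is the lower-bound half. After Pl\"{u}nnecke--Ruzsa gives polynomial control of iterated sum sets and iterated product sets \emph{separately} under the hypothesis $s,p\le n^{1+\epsilon_1}$, your decisive step --- ``show, by double counting, that a set this close to being simultaneously additively and multiplicatively closed would behave like a finite subring of $\mathbb{Z}$, impossible unless $n$ is bounded'' --- is not an argument but a restatement of the sum--product phenomenon itself; converting that qualitative impossibility into any $\epsilon_1>0$ is the entire content of the theorem, and no double count is exhibited. Note also that the mixed object $A\cdot A+A\cdot A$ you propose to analyze is controlled by \emph{neither} hypothesis: small additive doubling of $A$ says nothing about additive doubling of $A\cdot A$, so Pl\"{u}nnecke--Ruzsa cannot even get the iteration started there. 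Within this paper the economical repair is a forward reference: since $A\subseteq\mathbb{Z}\subseteq\mathbb{R}$, Theorem \ref{Elekes} yields the left inequality with $\epsilon_1=\frac{1}{4}$, and this is fully self-contained here because Theorem \ref{Szemeredi-Trotter} is proved in Appendix A. Your stated reason for avoiding that route (the tools appear later in the paper) is a presentational concern, not a mathematical one; as written, your alternative for the lower bound is a plan with its central step missing.
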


Their theorem stimulated many subsequent works that either improved the exponents or generalized it to other settings such as $\mathbb{F}_p$. The conjecture is also widely believed to be true in integers, real numbers, complex numbers, fields, and even arbitrary rings \cite{Tao2009sum}. This conjecture motivates numerous priceless research in various areas and has been extremely important in various research areas. After decades, the conjecture has been connected to other areas, such as Incidence Geometry \cite{Chang2007sum}, Harmonic Analysis \cite{Shkredov2023some}, Number Theory, etc. 
\par

\section{Milestones in Real Number}
In this section, we discuss the Sum-Product Phenomenon in real numbers. Recall that the Erd\"{o}s-Szemer\'{e}di Conjecture in real number is: 
\begin{quote}
For any finite subset $A\subseteq\mathbb{R}$, one has $\max\{|A+A|,|A\cdot A|\}\gg_{\delta} {|A|}^{2-\delta}$ for any $\delta>0$.
\end{quote}
\par
The conjecture is still open, and there have been several important results that are shown in the following table on the next page: 
\begin{table}
\centering
\makegapedcells
\begin{tabularx}{0.9\linewidth}{>{\centering\arraybackslash}X|c}
Authors&Exponential Part\\
\hline
Erd\"{o}s and Szemer\'{e}di (1983, \cite{Erdos1983sums})&$1+\epsilon$\text{ for some }$\epsilon>0$\\
\hline
Elekes (1997, \cite{Elekes1997number})&$1+\frac{1}{4}$\\[3mm]
\hline
Solymosi (2009, \cite{Solymosi2009bounding})&$1+\frac{1}{3}-\delta$ for any $\delta>0$\\[3mm]
\hline
Rudnev and Stevens (2020, \cite{Rudnev2022update})&$1+\frac{1}{3}+\frac{2}{1167}-\delta$ for any $\delta>0$
\end{tabularx}
\end{table}
\newpage
\par
Let us first introduce the beautiful results of Elekes and Solymosi, which are the two most crucial results overall. To start with, we introduce an important notation in Incidence Geometry: Given a set of points $\mathcal{P}\in\mathbb{R}^2$ and a collection $\mathcal{L}$ of lines in $\mathbb{R}^2$, define their incidence number
\[\mathcal{I}(\mathcal{P},\mathcal{L}):=\left|\left\{(p,l)\in\mathcal{P}\times\mathcal{L}\mid p\in l\right\}\right|.\]
\subsection{Elekes' result}\label{2.1}
We first recall the Szemer\'{e}di-Trotter Incidence Theorem, an extremely important theorem in Incidence Geometry (Its proof will be provided in the Appendix A. for the sake of self-containment).
\begin{theorem}[Szemer\'{e}di-Trotter Incidence Theorem, \cite{Szemeredi1983extremal}]\label{Szemeredi-Trotter}
On Euclidean plane $\mathbb{R}^2$, given a point set $\mathcal{P}$ and a line set $\mathcal{L}$, then the number of their incidences
\[\mathcal{I}(\mathcal{P},\mathcal{L})=O\left({|\mathcal{P}|}^{\frac{2}{3}}{|\mathcal{L}|}^{\frac{2}{3}}+|\mathcal{P}|+|\mathcal{L}|\right).\]
\end{theorem}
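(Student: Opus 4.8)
The plan is to deduce the theorem from Székely's graph-theoretic argument, which reduces the incidence bound to the \emph{crossing number inequality}. Recall that this inequality states: for any simple graph $G$ drawn in the plane with $n$ vertices and $m$ edges, if $m \geq 4n$ then the minimum number of edge crossings over all drawings satisfies $\mathrm{cr}(G) \geq m^3/(64n^2)$. I would first either cite this as known or include its short probabilistic proof, namely delete each vertex independently with probability $p$, apply the elementary bound $\mathrm{cr}(H) \geq e(H) - 3v(H)$ (a consequence of Euler's formula) to the random subgraph $H$, take expectations, and optimize over $p$.

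The key construction turns the incidence configuration into such a graph. I would discard every line of $\mathcal{L}$ containing at most one point of $\mathcal{P}$, since these lines contribute at most $|\mathcal{L}|$ to $\mathcal{I}$ in total. For each remaining line $\ell$ carrying $i_\ell \geq 2$ points, I order those points along $\ell$ and join consecutive ones by an edge, producing a plane graph $G$ with vertex set contained in $\mathcal{P}$, so $n \leq |\mathcal{P}|$, and
\[
m = \sum_{\ell} (i_\ell - 1) \geq \mathcal{I} - 2|\mathcal{L}|,
\]
where the sum runs over the kept lines and we use that a line with $i_\ell$ incidences yields $i_\ell - 1$ edges. The only geometric input is that two distinct lines meet in at most one point: this forces any two of the drawn edges to cross at most once, so the total number of crossings is at most $\binom{|\mathcal{L}|}{2} < |\mathcal{L}|^2$, giving $\mathrm{cr}(G) < |\mathcal{L}|^2$.

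Finally I would split into two cases according to whether the crossing lemma applies. If $m < 4|\mathcal{P}|$, then $\mathcal{I} \leq m + 2|\mathcal{L}| < 4|\mathcal{P}| + 2|\mathcal{L}|$, which is absorbed into the $|\mathcal{P}| + |\mathcal{L}|$ term. Otherwise $m \geq 4|\mathcal{P}| \geq 4n$, and the crossing lemma combined with the crossing bound gives $m^3/(64|\mathcal{P}|^2) \leq \mathrm{cr}(G) < |\mathcal{L}|^2$, hence $m \ll |\mathcal{P}|^{2/3}|\mathcal{L}|^{2/3}$ and therefore $\mathcal{I} \leq m + 2|\mathcal{L}| \ll |\mathcal{P}|^{2/3}|\mathcal{L}|^{2/3} + |\mathcal{L}|$. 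Combining the two cases yields the stated estimate. The main obstacle is the crossing lemma itself: everything downstream is bookkeeping, so the real work concentrates in establishing the crossing inequality from Euler's formula plus the random-deletion argument. A secondary subtlety is the careful treatment of the low-incidence lines and the $m \geq 4n$ threshold, which is exactly what makes the additive $|\mathcal{P}| + |\mathcal{L}|$ term appear in the final bound.
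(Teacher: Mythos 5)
Your proposal is correct and follows essentially the same route as the paper's own proof in Appendix A: both are Székely's crossing-number argument, discarding low-incidence lines, joining consecutive points of $\mathcal{P}$ along each remaining line, bounding the crossings by $\binom{|\mathcal{L}|}{2}$ via the fact that two lines meet in at most one point, and then invoking the crossing number inequality with a case split that produces the additive $|\mathcal{P}|+|\mathcal{L}|$ terms. The only cosmetic differences are bookkeeping choices (you discard lines with at most one point and use $m\geq\mathcal{I}-2|\mathcal{L}|$, while the paper discards lines with at most two points and uses $|E|\geq\tfrac12\mathcal{I}$), which do not change the argument.
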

It is important to remark that the result is sharp,  meaning there exists a configuration of points $P$ and lines $\mathcal{L}$ whose incidence is $\sim |P|^{2/3}|\mathcal{L}|^{2/3}$. In 1997, Elekes brilliantly applied the result of Szemer\'{e}di-Trotter Theorem to the Sum-Product problem and obtained the following theorem:
\begin{theorem}[Elekes, \cite{Elekes1997number}]\label{Elekes}
Given any finite subset $A\subseteq\mathbb{R}$, 
\[\max\{|A+A|,|A\cdot A|\}\gg {|A|}^{1+\frac{1}{4}}.\]
\end{theorem}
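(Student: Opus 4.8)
The plan is to apply the Szemer\'{e}di-Trotter Incidence Theorem to a cleverly chosen configuration of points and lines built out of the set $A$, so that a lower bound on incidences forces $|A+A|$ and $|A\cdot A|$ to be large. First I would define the point set to be the Cartesian product
\[
\mathcal{P}:=(A+A)\times(A\cdot A)=\{(s,t)\mid s\in A+A,\ t\in A\cdot A\},
\]
so that $|\mathcal{P}|=|A+A|\,|A\cdot A|$. The key idea is to produce a family of lines, each of which passes through \emph{many} points of $\mathcal{P}$, thereby guaranteeing a large incidence count from below; the Szemer\'{e}di-Trotter bound then supplies a competing upper bound, and comparing the two yields the sum-product estimate.

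Next I would construct the lines. For each pair $(a,b)\in A\times A$ I would take the line
\[
\ell_{a,b}:\quad y=b\,(x-a),
\]
giving a collection $\mathcal{L}=\{\ell_{a,b}\mid a,b\in A\}$ of at most $|A|^2$ lines (I would note these are distinct, so $|\mathcal{L}|=|A|^2$). The crucial observation is that for a fixed $(a,b)$ and \emph{any} $a'\in A$, the point $(a+a',\,b\,a')$ lies on $\ell_{a,b}$, since plugging $x=a+a'$ gives $y=b\,a'$; moreover $a+a'\in A+A$ and $ba'\in A\cdot A$, so this point lies in $\mathcal{P}$. Thus each of the $|A|^2$ lines contains at least $|A|$ points of $\mathcal{P}$, which forces
\[
\mathcal{I}(\mathcal{P},\mathcal{L})\ge |A|^2\cdot|A|=|A|^3.
\]

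I would then invoke Theorem \ref{Szemeredi-Trotter} to bound the same incidence count from above:
\[
\mathcal{I}(\mathcal{P},\mathcal{L})=O\!\left(|\mathcal{P}|^{2/3}|\mathcal{L}|^{2/3}+|\mathcal{P}|+|\mathcal{L}|\right).
\]
Writing $N:=\max\{|A+A|,|A\cdot A|\}$, we have $|\mathcal{P}|\le N^2$ and $|\mathcal{L}|=|A|^2$, so the dominant term $|\mathcal{P}|^{2/3}|\mathcal{L}|^{2/3}$ is $O\!\left(N^{4/3}|A|^{4/3}\right)$. Combining with the lower bound $|A|^3\ll \mathcal{I}(\mathcal{P},\mathcal{L})$ gives $|A|^3\ll N^{4/3}|A|^{4/3}$, hence $N^{4/3}\gg |A|^{5/3}$ and therefore $N\gg |A|^{5/4}=|A|^{1+\frac14}$, which is the desired bound.

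The main obstacle I anticipate is verifying that the incidence count is genuinely dominated by the product term $|\mathcal{P}|^{2/3}|\mathcal{L}|^{2/3}$ rather than by the linear terms $|\mathcal{P}|+|\mathcal{L}|$ in the Szemer\'{e}di-Trotter bound; one must confirm that $N^{4/3}|A|^{4/3}$ dominates both $N^2$ and $|A|^2$ in the relevant regime (for instance, when $N$ is not too large the linear terms are negligible, and when $N$ is large the conclusion $N\gg|A|^{5/4}$ already holds trivially). A secondary technical point is checking that the lines $\ell_{a,b}$ are distinct and that the claimed incident points indeed lie in $\mathcal{P}$, which are both straightforward but should be stated carefully so that the counting $|\mathcal{L}|=|A|^2$ and $\mathcal{I}\ge|A|^3$ are rigorous.
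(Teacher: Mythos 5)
Your proposal is correct and takes essentially the same route as the paper: the same point set $\mathcal{P}=(A+A)\times(A\cdot A)$, the same lines $y=b(x-a)$ with $a,b\in A$, the same observation that each of the $|A|^2$ lines contains at least $|A|$ points of $\mathcal{P}$, and the same comparison of this $|A|^3$ lower bound against the Szemer\'{e}di-Trotter upper bound to get $N\gg|A|^{5/4}$. Your treatment of the secondary terms (noting that if $|\mathcal{P}|$ or $|\mathcal{L}|$ dominated, the conclusion would hold trivially or even more strongly) matches the case analysis implicit in the paper's chain of inequalities.
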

\begin{proof}
Consider the point set $\mathcal{P}=(A+A)\times(A\cdot A)$ and the line set 
\[\mathcal{L}=\{l\mid l:y=a_1(x-a_2),a_1,a_2\in A\}.\]
For any lines $l:y=a(x-a')\in\mathcal{L}$ and $a''\in A$, $(a'+a'',aa'')\in\mathcal{P}$, so for every line in $\mathcal{L}$, there are at least $|A|$ points in $\mathcal{P}$ lie on it. In other words, by Theorem \ref{Szemeredi-Trotter}, we have
\begin{equation*}
\begin{split}
&{|A|}^3=|A||\mathcal{L}|\leq\mathcal{I}(\mathcal{P},\mathcal{L})\ll {|A+A|}^{\frac{2}{3}}{|A\cdot A|}^{\frac{2}{3}}{|A|}^{\frac{4}{3}}+|A+A||A\cdot A|+{|A|}^2\\
&\Rightarrow {|A|}^3\ll\max\left\{{|A+A|}^{\frac{2}{3}}{|A\cdot A|}^{\frac{2}{3}}{|A|}^{\frac{4}{3}},|A+A||A\cdot A|\right\}\\
&\Rightarrow {|A|}^{\frac{5}{3}}\ll {|A+A|}^{\frac{2}{3}}{|A\cdot A|}^{\frac{2}{3}}\\
&\Rightarrow {|A|}^{\frac{5}{4}}\ll {|A+A|}^{\frac{1}{2}}{|A\cdot A|}^{\frac{1}{2}}\leq\max\{|A+A|,|A\cdot A|\}.\\
\end{split}
\end{equation*}
This completes the proof.
\end{proof}
\subsection{Solymosi's result}
To demonstrate Solymosi's result, we need the following definition:
\begin{definition}[additive energy, multiplicative energy]
Given two sets $A,B$, their \textit{additive energy} is defined as
\begin{equation}\label{eq1}
E^+(A,B):=|\{(a_1,a_2,b_1,b_2)\in A\times A\times B\times B\mid a_1+b_1=a_2+b_2\}|
\end{equation}
and their \textit{multiplicative energy} is defined as 
\[E^\times(A,B):=|\{(a_1,a_2,b_1,b_2)\in A\times A\times B\times B\mid a_1b_1=a_2b_2\}|.\]
Especially, if $A=B$, define $E^+(A):=E^+(A,A)$ and $E^\times(A):=E^\times(A,A)$.
\end{definition}
Note that the additive energy (resp. multiplicative energy) indicates how many redundant additive pairs (resp. multiplicative pairs) the sets have. In other words, a set with large additive energy (resp. multiplicative energy) will have a small sum set (resp. product set). This makes these energy estimates useful for estimating sum and product sets. To be more specific, define
\[r_{A+A}(z):=|\{(a_1,a_2)\in A\times A\mid z=a_1+a_2\}|.\]
Then, by Cauchy-Schwarz inequality, we can see that
\begin{equation}\label{eq2}
{|A|}^4={\left(\sum_{z\in A+A}r_{A+A}(z)\right)}^2\leq|A+A|\sum_{z\in A+A}{r_{A+A}(z)}^2=|A+A|E^+(A).
\end{equation}
Therefore, we can obtain a lower bound of the sum set by an upper bound of additive energy.
\begin{theorem}[Solymosi, \cite{Solymosi2009bounding}]
Given any finite subset $A\subseteq\mathbb{R}$, 
\[\max\{|A+A|,|A\cdot A|\}\gg {|A|}^{1+\frac{1}{3}}.\]
\end{theorem}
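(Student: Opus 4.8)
The plan is to deduce the theorem from an upper bound on the \emph{multiplicative} energy $E^\times(A)$, in exact analogy with how the Cauchy--Schwarz bound \eqref{eq2} turns an energy estimate into a lower bound for a sumset. Writing $r_{A\cdot A}(z):=|\{(a_1,a_2)\in A\times A\mid z=a_1a_2\}|$, the multiplicative twin of \eqref{eq2} reads
\[{|A|}^4={\left(\sum_{z\in A\cdot A}r_{A\cdot A}(z)\right)}^2\le|A\cdot A|\sum_{z\in A\cdot A}{r_{A\cdot A}(z)}^2=|A\cdot A|\,E^\times(A),\]
so $|A\cdot A|\ge{|A|}^4/E^\times(A)$. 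Hence it suffices to prove $E^\times(A)\lesssim{|A+A|}^2$: setting $M:=\max\{|A+A|,|A\cdot A|\}$, this would give $M\ge|A\cdot A|\ge{|A|}^4/E^\times(A)\gtrsim{|A|}^4/M^2$, whence $M^3\gtrsim{|A|}^4$ and the theorem follows. Since replacing $A$ by the larger of $A\cap(0,\infty)$ and $-(A\cap(-\infty,0))$ shrinks neither $|A+A|$ nor $|A\cdot A|$ while changing $|A|$ by at most a factor of two, I may assume throughout that every element of $A$ is positive.

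The core is the estimate $E^\times(A)\lesssim{|A+A|}^2$, which I would establish geometrically following Solymosi. View $A\times A$ as a point set in the plane and group its points by the ray through the origin on which they lie; a point $(x,y)$ lies on the ray of slope $y/x$. If $n_i$ denotes the number of points on the $i$-th such ray, then $E^\times(A)=\sum_i n_i^2$, because a multiplicative quadruple with $a_1a_3=a_2a_4$ is exactly a pair of points $(a_1,a_2)$, $(a_4,a_3)$ sharing a ray. Order the rays by slope. The decisive observation is that for two \emph{adjacent} rays $\ell_i,\ell_{i+1}$ the vector addition map $(p,q)\mapsto p+q$, sending a point $p\in\ell_i$ and a point $q\in\ell_{i+1}$ to their sum, is injective—the differing slopes let one recover $p,q$ from $p+q$—while every such sum lies in $(A+A)\times(A+A)$. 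Moreover $p+q$ has slope strictly between those of $\ell_i$ and $\ell_{i+1}$, so the images belonging to distinct consecutive pairs lie in disjoint angular sectors and are therefore pairwise disjoint. Summing these injections over all consecutive pairs yields
\[\sum_i n_in_{i+1}\le\bigl|(A+A)\times(A+A)\bigr|={|A+A|}^2.\]

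The remaining, and genuinely delicate, step is to convert the bound on $\sum_i n_in_{i+1}$ into a bound on $\sum_i n_i^2=E^\times(A)$; I expect this to be the main obstacle, since a direct comparison is false—a configuration in which heavy rays alternate with light ones makes $\sum_i n_in_{i+1}$ far smaller than $\sum_i n_i^2$. I would circumvent this by a dyadic decomposition: partition the rays into classes according to $n_i\in[2^k,2^{k+1})$. Within a single class the rays, taken in slope order, still produce disjoint sectors for consecutive-in-class pairs, so the same injectivity argument bounds the number of rays in that class times $4^k$ by $O({|A+A|}^2)$; as each such ray has $n_i^2<4^{k+1}$, the class contributes $O({|A+A|}^2)$ to $E^\times(A)$. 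Because $n_i\le|A|$ there are only $O(\log|A|)$ nonempty classes, so $E^\times(A)\lesssim{|A+A|}^2\log|A|$. Feeding this into the reduction above gives $M^3\gtrsim{|A|}^4/\log|A|$, that is $M\gtrsim{|A|}^{4/3}/\log^{1/3}|A|\gg{|A|}^{4/3-\delta}$ for every $\delta>0$—matching the exponent $1+\tfrac13-\delta$ in the table, with the logarithmic loss being exactly the price of the dyadic pigeonholing.
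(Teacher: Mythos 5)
Your proposal is correct and takes essentially the same approach as the paper's proof: both implement Solymosi's argument, reducing via Cauchy--Schwarz to the bound $E^\times(A)\lesssim{|A+A|}^2\log|A|$ and proving it by grouping $A\times A$ into rays through the origin and using injectivity of vector addition on consecutive rays together with disjointness of the resulting sectors inside $(A+A)\times(A+A)$. The only difference is bookkeeping: you sum the contribution of every dyadic class of ray multiplicities, while the paper pigeonholes to the single heaviest class (and appends an auxiliary vertical line so every ray has a successor), both yielding the same ${|A|}^{4/3}/\log^{1/3}|A|$ lower bound.
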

\begin{proof}
Without loss of generality, we may assume that every element in $A$ is positive. Let 
\[r_{A/A}(x):=\{(a_1,a_2)\in A\times A\mid x=a_1/a_2\},xA:=\{xa\mid a\in A\}\]
for any element $x$ and 
\[A/A:=\{a/a'\mid a,a'\in A\}.\]
First, we have 
\begin{equation*}
\begin{split}
E^\times(A)&=\sum_{x\in A/A}{r_{A/A}(x)}^2=\sum_{x\in A/A}{|xA\cap A|}^2\\
&=\sum_{i=1}^{\lceil\log_2|A|\rceil}\sum_{\substack{x:2^{i-1}\leq|xA\cap A|<2^i}}{|xA\cap A|}^2.
\end{split}
\end{equation*}
\par 
Then, by the pigeonhole principle, there is an index $1\leq i_0\leq\lceil\log_2|A|\rceil$ such that
\begin{equation}\label{eq3}
\frac{E^\times(A)}{\lceil\log_2|A|\rceil}\leq\sum_{x:2^{i_0-1}\leq|xA\cap A|<2^{i_0}}{|xA\cap A|}^2.
\end{equation}
Let $D:=\{x\mid 2^{i_0-1}\leq|xA\cap A|<2^{i_0}\}=\{s_1,\dotsc,s_n\}$ with $|D|=n$ and $s_1<\dotsb<s_n$. Set the line $l_i:y=s_ix$ for $i=1,\dotsc,n$ and $l_{n+1}:x=a_1$ where $a_1$ is the smallest element in $A$. For $i=1,\dotsc,n$, consider the point sets
\[P_i:=\big(l_i\cap(A\times A)\big)+\big(l_{i+1}\cap(A\times A)\big)\text{ and }P:=\bigsqcup_{i=1}^nP_i\subseteq(A\times A)+(A\times A)\]
where the addition symbol means vector addition and $A\times A$ means the Cartesian product.
\par
Next, since $|l_i\cap(A\times A)|\geq2^{i_0-1}$,
\[|P|=\sum_{i=1}^n|P_i|\geq n2^{i_0}.\]
On the other hand, 
\[|P|\leq|(A+A)\times(A+A)|={|A+A|}^2\Rightarrow n2^{i_0}\leq {|A+A|}^2.\]
Together with Equation \eqref{eq3}, we get
\[\frac{E^\times(A)}{\lceil\log_2|A|\rceil}\leq n2^{i_0}\leq {|A+A|}^2.\]
Combining it with the multiplicative version of Equation \eqref{eq2}, we get
\[\frac{{|A|}^4}{|A\cdot A|\lceil\log_2|A|\rceil}\leq\frac{E^\times(A)}{\lceil\log_2|A|\rceil}\leq {|A+A|}^2.\]
Hence, 
\[\frac{{|A|}^{\frac{4}{3}}}{{\lceil\log_2|A|\rceil}^{\frac{1}{3}}}\leq {|A+A|}^{\frac{2}{3}}{|A\cdot A|}^{\frac{1}{3}}\leq\max\{|A+A|,|A\cdot A|\},\]
which completes this proof.
\end{proof}
Note that in the above proof, we use the pigeonhole principle to choose a suitable index $i_0$ that satisfies Equation \eqref{eq3}. This skill, called the ``dyadic argument,'' is widely used in the proofs of many results of the Sum-Product Phenomena \cite{Tao2006additive}.
\section{Connection between Incidence Geometry and Sum-Product Phenomena in \texorpdfstring{$\mathbb{F}_p$}{}}
In this section, we will display several useful Incidence Geometry results in $\mathbb{F}_p$ and especially focus on those that have strong connections with Sum-Product estimates. In fact, the known result by Bourgain, Katz, and Tao has established the first non-trivial bound for the point-line incidences in the setting of $\mathbb F_p$, which was an application of their sum-product estimate. However, there have been many improvements since then, which we state below.
\subsection{Szemer\'{e}di-Trotter type theorem}
We start with a familiar example that we mentioned in Section \ref{2.1}. In the proof of Theorem \ref{Elekes}, the most important lemma is Theorem \ref{Szemeredi-Trotter} (Szemer\'{e}di-Trotter Theorem). Therefore, obtaining Szemer\'{e}di-Trotter type theorem in $\mathbb{F}_p$, where $p$ is a prime, can also provide a non-trivial bound for Erd\"{o}s-Szemer\'{e}di Conjecture. Sophie Stevens and Frank De Zeeuw obtain an explicit result as follows \cite{Stevens2017improved}. (Proof of the following theorem will be provided in Appendix for the sake of self-containment.)
\begin{theorem}[Szemer\'{e}di-Trotter type Theorem in $\mathbb{F}_p$, \cite{Stevens2017improved}]\label{Szemeredi-Trotter_in_Fp}
On $\mathbb{F}_p^2$ where $p$ is a prime, given a point set $\mathcal{P}=A\times B$ and a line set $\mathcal{L}$ with 
\[|A|\leq|B|,|A|{|B|}^2\leq {|\mathcal{L}|}^3\text{, and }|A||\mathcal{L}|\ll p^2\]
then the number of their incidences
\[\mathcal{I}(A\times B,\mathcal{L})=O\left({|A|}^{\frac{3}{4}}{|B|}^{\frac{1}{2}}{|\mathcal{L}|}^{\frac{3}{4}}+|\mathcal{L}|\right).\]
\end{theorem}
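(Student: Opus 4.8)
The plan is to deduce this planar point--line estimate from a point--plane incidence bound in three dimensions, namely Rudnev's theorem: for a set of points $\mathcal{Q}$ and a set of planes $\Pi$ in $\mathbb{F}_p^3$ with $|\mathcal{Q}|\le|\Pi|$ and $|\mathcal{Q}|=O(p^2)$, one has $\mathcal{I}(\mathcal{Q},\Pi)\ll|\Pi|\,|\mathcal{Q}|^{1/2}+k|\Pi|$, where $k$ is the largest number of collinear points in $\mathcal{Q}$. First I would dispose of the degenerate lines: lines carrying at most one point of $A\times B$ contribute at most $|\mathcal{L}|$ in total, and the vertical lines meet $A\times B$ in at most $|A|\,|B|$ points altogether (only $x=a$ with $a\in A$ meet the grid), a quantity that the hypothesis $|A||B|^2\le|\mathcal{L}|^3$ forces to be dominated by the main term. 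Every remaining line is written as $y=mx+b$, so $\mathcal{L}$ is identified with a set of pairs $(m,b)$ and an incidence of $(a,c)\in A\times B$ with $(m,b)$ reads $c=ma+b$.

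The heart of the argument is to pass to the second moment. Splitting off the poor lines and applying Cauchy--Schwarz over the rich ones gives $\mathcal{I}(A\times B,\mathcal{L})\ll|\mathcal{L}|+\bigl(|\mathcal{L}|\,T\bigr)^{1/2}$, where $T:=\sum_{\ell\in\mathcal{L}}|\ell\cap(A\times B)|^2$, so it suffices to bound $T$. I would realize $T$ as a point--plane incidence count by taking the point set $\mathcal{Q}=\{(m,b,ma)\in\mathbb{F}_p^3 : (m,b)\in\mathcal{L},\ a\in A,\ ma+b\in B\}$ and, in variables $(U,V,W)$, the plane set $\Pi=\{\,xU+V=y : (x,y)\in A\times B\,\}$. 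A point $(m,b,ma)$ of $\mathcal{Q}$ lies on the plane indexed by $(x,y)$ exactly when $mx+b=y$, i.e. when $(x,y)$ is a grid point on the line $(m,b)$; hence each incidence records an ordered pair of grid points on a common line of $\mathcal{L}$ and $\mathcal{I}(\mathcal{Q},\Pi)=T$. Here $|\mathcal{Q}|\le|A|\,|\mathcal{L}|$ and $|\Pi|=|A|\,|B|$, and the decisive geometric input is that every line of $\mathbb{F}_p^3$ meets $\mathcal{Q}$ in at most $|A|$ points: for a fixed slope the relevant points already lie on the single line $\{(m,b,\ast)\}$ and number at most $|A|$, while any other line carries at most one point of $\mathcal{Q}$ for each value of $a$. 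Thus $k\le|A|$, and Rudnev's bound yields $T\ll|A|\,|B|\,(|A|\,|\mathcal{L}|)^{1/2}+|A|\cdot|A|\,|B|=|A|^{3/2}|B|\,|\mathcal{L}|^{1/2}+|A|^2|B|$.

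The three hypotheses enter precisely at this bookkeeping stage. The range condition $|A|\,|\mathcal{L}|\ll p^2$ is exactly what places $\mathcal{Q}$ in the regime $|\mathcal{Q}|=O(p^2)$ in which Rudnev's theorem is valid. The constraints $|A|\le|B|$ and $|A||B|^2\le|\mathcal{L}|^3$, which together force $|A|\le|\mathcal{L}|$, guarantee that the collinearity contribution $|A|^2|B|$ is absorbed by $|A|^{3/2}|B|\,|\mathcal{L}|^{1/2}$. Feeding the resulting bound $T\ll|A|^{3/2}|B|\,|\mathcal{L}|^{1/2}$ back through the Cauchy--Schwarz step produces the leading term $|A|^{3/4}|B|^{1/2}|\mathcal{L}|^{3/4}$, while the poor-line count produces the additive $|\mathcal{L}|$, completing the stated estimate.

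The main obstacle I anticipate is making the collinearity bound and the size hypotheses of Rudnev's theorem hold simultaneously. Bounding $k$ rigorously requires ruling out transverse lines in $\mathbb{F}_p^3$ that could be unexpectedly rich in points of $\mathcal{Q}$, which relies on the lift $W=ma$ genuinely separating points sharing a common $(m,b)$; and confirming the orientation $|\mathcal{Q}|\le|\Pi|$ needed to invoke Rudnev in the favourable direction is exactly where the arithmetic relations among $|A|,|B|,|\mathcal{L}|$ must be exploited rather than the crude size bounds, since the reverse orientation returns a strictly weaker exponent. Aligning these inequalities with the precise hypotheses, together with the careful treatment of the degenerate cases ($m=0$, vertical lines, and lines with at most one grid point) feeding the $|\mathcal{L}|$ term, is the delicate part of the proof.
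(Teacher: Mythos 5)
Your strategy---Cauchy--Schwarz over the lines to reduce to $T=\sum_{\ell}|\ell\cap(A\times B)|^2$, then a point--plane encoding of $T$---is genuinely different from the paper's proof, but it has two real gaps, both sitting exactly where you flagged ``delicacy.'' The first is that your collinearity bound $k\le|A|$ is false. If $\mathcal{L}$ contains a large pencil of concurrent lines, say the lines $y=mx$ for $m\in M$, then for a \emph{fixed} $a_0\in A$ the lifted points $(m,0,ma_0)$ all lie on the single line $\{(t,0,a_0t):t\in\mathbb{F}_p\}$ of $\mathbb{F}_p^3$; choosing $B\supseteq a_0M$ (which is compatible with all three hypotheses, e.g.\ $B=a_0M$, $|A|\le|M|$) puts $|M|=|\mathcal{L}|$ points of $\mathcal{Q}$ on one line, all coming from the same value of $a$ --- so ``at most one point of $\mathcal{Q}$ for each value of $a$'' fails precisely for these transverse lines, and the error term $k|\Pi|$ blows up. Parallel families in $\mathcal{L}$ cause the same problem. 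This is not a technicality: it is exactly why the paper's proof (Stevens--De Zeeuw) contains a pruning step with no analogue in your outline, removing concurrent and parallel families so that no more than $|A|^{1/2}|\mathcal{L}|^{1/2}$ lines share a point or a direction before the three-dimensional argument begins.

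The second gap is the orientation of Rudnev's theorem, and it appears to be fatal to this particular reduction rather than merely delicate. The form you quote requires $|\mathcal{Q}|\le|\Pi|$, i.e.\ morally $|A||\mathcal{L}|\le|A||B|$; but the hypotheses $|A|\le|B|$, $|A||B|^2\le|\mathcal{L}|^3$, $|A||\mathcal{L}|\ll p^2$ nowhere force $|\mathcal{L}|\le|B|$, and the regime that matters in applications (e.g.\ Corollary \ref{Stevens}, where $|\mathcal{L}|=|A_0|^2\ge |B|$) is the opposite one. A case split does not rescue it: if $|\mathcal{Q}|\le|A||B|$ your computation goes through (granting the pruning fix above), but if $|\mathcal{Q}|>|A||B|$ the only admissible orientation of Rudnev gives $T\ll|\mathcal{Q}|(|A||B|)^{1/2}+k'|\mathcal{Q}|$ with $k'\le|A|$, and since $|\mathcal{Q}|$ equals the incidence count itself, Cauchy--Schwarz then yields only $\mathcal{I}\ll|A|^{1/2}|B|^{1/2}|\mathcal{L}|$, which misses the target $|A|^{3/4}|B|^{1/2}|\mathcal{L}|^{3/4}$ by the factor $(|\mathcal{L}|/|A|)^{1/4}\ge1$. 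The structural issue is that second-moment-over-lines always produces a point--plane problem with sides of sizes roughly $|A||\mathcal{L}|$ (or $\mathcal{I}$) and $|A||B|$, and Rudnev is too weak in the direction you need. The paper sidesteps both problems at once by running Cauchy--Schwarz over $B$ instead of over $\mathcal{L}$: this replaces $T$ by the energy $E=|\{(a,c,d,a',c',d'):ca+d=c'a'+d'\}|$, realized as incidences between the point set $A\times\mathcal{L}^*$ and a plane set also indexed by $A\times\mathcal{L}^*$, so both sides have size exactly $|A||\mathcal{L}|$, the orientation condition is automatic, $|A||\mathcal{L}|\ll p^2$ is precisely the required size restriction, and the pruned concurrency bound supplies $k\le|A|^{1/2}|\mathcal{L}|^{1/2}$. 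That symmetrization, together with the pruning step, is what your proposal is missing.
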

With this theorem, we can deduce the following theorem via an argument similar to the proof of Theorem \ref{Elekes}.
\begin{corollary}[Corollary 9, \cite{Stevens2017improved}]\label{Stevens}
Given any subset $A\subseteq\mathbb{F}_p$ with $|A|\ll p^{\frac{2}{3}}$, 
\[\max\{|A+A|,|A\cdot A|\}\gg {|A|}^{1+\frac{1}{5}}.\]
\end{corollary}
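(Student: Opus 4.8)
The plan is to transcribe Elekes' argument (the proof of Theorem~\ref{Elekes}) almost verbatim, replacing the real Szemer\'edi--Trotter theorem by its $\mathbb{F}_p$ analogue, Theorem~\ref{Szemeredi-Trotter_in_Fp}. Concretely, I would take the point set $\mathcal{P}=(A+A)\times(A\cdot A)$ and the line set $\mathcal{L}=\{\,y=a_1(x-a_2)\mid a_1,a_2\in A\,\}$. After discarding $0$ from $A$ if present (which changes $|A|$ by at most one and does not affect the asymptotics), the lines with nonzero slope are pairwise distinct, so $|\mathcal{L}|=|A|^2$, and exactly as in Elekes' proof every line $y=a(x-a')$ carries the $|A|$ distinct points $(a'+a'',aa'')$, $a''\in A$, all lying in $\mathcal{P}$. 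This gives the incidence lower bound $\mathcal{I}(\mathcal{P},\mathcal{L})\ge |A|\,|\mathcal{L}|=|A|^3$.

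The next step is to fit the configuration into the precise shape demanded by Theorem~\ref{Szemeredi-Trotter_in_Fp}, whose bound is asymmetric in the two coordinate factors and requires the \emph{first} factor to be the smaller one. Writing $N:=\max\{|A+A|,|A\cdot A|\}$, I would reflect the whole configuration across the line $y=x$ when necessary; this sends lines to lines (a line $y=a_1(x-a_2)$ becomes $y=a_1^{-1}x+a_2$ for $a_1\ne 0$, still $|A|^2$ distinct lines) and preserves incidences, so I may assume the first coordinate set is the smaller of $A+A$ and $A\cdot A$. Calling the coordinate factors $X$ and $Y$ with $|X|\le|Y|$, both have size at most $N$, and the two size hypotheses are then automatic: since $N\le|A|^2$ we get $|X|\,|Y|^2\le N^3\le|A|^6=|\mathcal{L}|^3$.

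With the hypotheses in force, Theorem~\ref{Szemeredi-Trotter_in_Fp} yields $\mathcal{I}(\mathcal{P},\mathcal{L})\ll|X|^{3/4}|Y|^{1/2}|\mathcal{L}|^{3/4}+|\mathcal{L}|\ll N^{5/4}|A|^{3/2}+|A|^2$. Comparing with the lower bound $|A|^3$ and observing that the term $|A|^2$ cannot dominate (else $|A|^3\ll|A|^2$), I obtain $|A|^3\ll N^{5/4}|A|^{3/2}$, that is $N\gg|A|^{6/5}=|A|^{1+\frac15}$, which is the claim.

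The main obstacle is the one remaining hypothesis of Theorem~\ref{Szemeredi-Trotter_in_Fp}, namely $|X|\,|\mathcal{L}|\ll p^2$, which is exactly where the restriction on $|A|$ relative to $p$ is spent. I would dispose of it by a dichotomy: if $N$ already exceeds the target $|A|^{6/5}$ there is nothing to prove, so I may assume $N<|A|^{6/5}$, whence $|X|\le N<|A|^{6/5}$ and $|X|\,|\mathcal{L}|<|A|^{16/5}$, and it then suffices that the hypothesis on $|A|$ make $|A|^{16/5}\ll p^2$ so that the theorem applies and the previous paragraph forces $N\gg|A|^{6/5}$ (contradicting the assumption unless the target already holds). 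The delicate point is pinning down the exact admissible range of $|A|$ that this final computation requires and checking it against the stated bound, since it is this constraint --- not the geometry, which is a direct copy of Elekes' argument --- that determines the $p$-dependent hypothesis; the only other bookkeeping to watch is the reflection step and the distinctness of the lines.
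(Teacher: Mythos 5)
Your construction is exactly the paper's: the Elekes point set $(A+A)\times(A\cdot A)$ with lines $y=a(x-a')$, and your reflection across $y=x$ is precisely the paper's second case, where it takes the lines $a(y-a')=x$ and the point set $(A\cdot A)\times(A+A)$ so that the smaller factor sits in the first coordinate. The incidence lower bound $|A|^3$, the distinctness of the lines, the check $|X|\,|Y|^2\le N^3\le|A|^6=|\mathcal{L}|^3$, and the deduction $|A|^3\ll N^{5/4}|A|^{3/2}\Rightarrow N\gg|A|^{6/5}$ are all correct and match the paper's computation (the paper keeps the factors separate and gets $|A|^6\ll|A+A|^3|A\cdot A|^2$, which yields the same exponent).

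However, the point you flag as ``delicate'' is a genuine, unresolved gap, and it cannot be closed under the stated hypothesis. Your dichotomy reduces the remaining hypothesis of Theorem \ref{Szemeredi-Trotter_in_Fp} to $|X|\,|\mathcal{L}|<|A|^{16/5}\ll p^2$, which is equivalent to $|A|\ll p^{5/8}$. The corollary is stated for $|A|\ll p^{2/3}$, and since $\frac{2}{3}>\frac{5}{8}$ this range is strictly larger: for $p^{5/8}\lesssim|A|\lesssim p^{2/3}$ one can have $|A|^{16/5}$ as large as $p^{32/15}\gg p^2$, so the incidence theorem cannot be invoked, and no choice of threshold repairs the dichotomy (a threshold $T$ must satisfy both $T\gg|A|^{6/5}$, to make the first horn of the dichotomy conclusive, and $T|A|^2\ll p^2$, to make the second horn legal, which together force $|A|^{16/5}\ll p^2$ again). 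So your argument, completed honestly, proves the corollary only for $|A|\ll p^{5/8}$ --- which is in fact the hypothesis in the original Corollary 9 of \cite{Stevens2017improved}. For comparison, the paper's own proof is silent on this verification: it simply asserts that $A'$, $B'$, $\mathcal{L}$ ``satisfy the assumption'' of Theorem \ref{Szemeredi-Trotter_in_Fp}. Your more careful bookkeeping has therefore exposed a discrepancy between the range $p^{2/3}$ claimed in the statement and what this argument actually supports, rather than a defect in your own geometry.
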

\begin{proof}
For a subset $A$, divide it into two cases:
\begin{enumerate}
\item If $|A+A|\leq|A\cdot A|$, then set 
\[\mathcal{L}:=\{l:y=a(x-a')\mid a,a'\in A\},A'=A+A\text{, and }B'=A\cdot A.\]
We will find that $A', B'$, and $\mathcal{L}$ satisfy the assumption of Theorem \ref{Szemeredi-Trotter_in_Fp}. Additionally, for any line $l:y=a(x-a')$ in $\mathcal{L}$ and $a''\in A$, 
\[(a'+a'',aa'')\in(A'\times B')\cap l.\]
In other words, by Theorem \ref{Szemeredi-Trotter_in_Fp},
\begin{equation*}
\begin{split}
&{|A|}^3=|A||\mathcal{L}|\leq I(A'\times B',\mathcal{L})\ll {|A+A|}^{\frac{3}{4}}{|A\cdot A|}^{\frac{1}{2}}{|A|}^{\frac{3}{2}}+{|A|}^2\\
&\Rightarrow {|A|}^6\ll {|A+A|}^3{|A\cdot A|}^2\\
&\Rightarrow {|A|}^{1+\frac{1}{5}}\ll\max\{|A+A|,|A\cdot A|\}.
\end{split}
\end{equation*}
\item On the other hand, if $|A+A|>|A\cdot A|$, then set 
\[\mathcal{L}:=\{l:a(y-a')=x\mid a,a'\in A\},A'=A\cdot A\text{, and }B'=A+A.\]
We can see that $\mathcal{L}, A', B'$ satisfy the assumptions of Theorem \ref{Szemeredi-Trotter_in_Fp}. Moreover, for any line $l:a(y-a')=x$ in $\mathcal{L}$ and $a''\in A$, 
\[(aa'',a'+a'')\in(A'\times B')\cap l.\]
That is, 
\begin{equation*}
\begin{split}
&{|A|}^3=|A||\mathcal{L}|\leq\mathcal{I}(A'\times B',\mathcal{L})\ll {|A\cdot A|}^{\frac{3}{4}}{|A+A|}^{\frac{1}{2}}{|A|}^{\frac{3}{2}}+{|A|}^2\\
&\Rightarrow {|A|}^6\ll {|A\cdot A|}^3{|A+A|}^2\\
&\Rightarrow {|A|}^{1+\frac{1}{5}}\ll\max\{|A+A|,|A\cdot A|\}.
\end{split}
\end{equation*}
\end{enumerate}
To sum up, in both cases, our statement holds.
\end{proof}
\begin{corollary}\label{Stevens2}
In the above proof, we can see that 
\[{|A|}^6\ll\min\{{|A+A|}^3{|A\cdot A|}^2, {|A\cdot A|}^3{|A+A|}^2\}.\]
Thus, in the extremal cases, we have the following bounds:
\begin{enumerate}
\item If $|A+A|=\Theta(|A|)$, then $|A\cdot A|\gg {|A\cdot A|}^{1+\frac{1}{2}}$.
\item If $|A\cdot A|=\Theta(|A|)$, then $|A+A|\gg {|A+A|}^{1+\frac{1}{2}}$.
\end{enumerate}
\end{corollary}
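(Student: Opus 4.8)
The plan is to harvest both halves of the asserted minimum from the two cases already completed in the proof of Corollary \ref{Stevens}, and then to specialize to the two extremal regimes.

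The first step is to recognize that each case of Corollary \ref{Stevens} produces exactly the minimum of the two products, not merely one of them. Indeed, the first construction (valid when $|A+A|\le|A\cdot A|$) gives $|A|^6\ll|A+A|^3|A\cdot A|^2$, while the second (valid when $|A+A|>|A\cdot A|$) gives $|A|^6\ll|A\cdot A|^3|A+A|^2$; and since the ratio of the two products is $|A+A|/|A\cdot A|$, the product appearing in each case is precisely the smaller of the two. Consequently $|A|^6\ll\min\{|A+A|^3|A\cdot A|^2,\,|A\cdot A|^3|A+A|^2\}$ holds for every $A$ with $|A|\ll p^{\frac{2}{3}}$, which is the first assertion.

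For the extremal bounds I would substitute into this minimum. If $|A+A|=\Theta(|A|)$, then because $|A\cdot A|\ge|A|$ always holds in $\mathbb{F}_p$ we have $|A+A|\ll|A\cdot A|$, so the minimum equals $|A+A|^3|A\cdot A|^2=\Theta(|A|^3)\,|A\cdot A|^2$; inserting this into $|A|^6\ll\min\{\cdots\}$ and dividing by $|A|^3$ yields $|A\cdot A|^2\gg|A|^3$, that is $|A\cdot A|\gg|A|^{1+\frac{1}{2}}$. The regime $|A\cdot A|=\Theta(|A|)$ is entirely symmetric and gives $|A+A|\gg|A|^{1+\frac{1}{2}}$.

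I expect no serious obstacle here: the whole content of the corollary is the minimum-identification in the first step, and the extremal bounds are pure substitution. The one item worth checking is that the hypotheses of Theorem \ref{Szemeredi-Trotter_in_Fp} survive in the extremal regimes, and here they are comfortably met: when $|A+A|=\Theta(|A|)$ the size condition $|A'||\mathcal{L}|\ll p^2$ becomes $\Theta(|A|^3)\ll p^2$, which is exactly the standing assumption $|A|\ll p^{\frac{2}{3}}$. Everything else reduces to the bookkeeping of which product realizes the minimum, as carried out above.
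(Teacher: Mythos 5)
Your proposal is correct and takes essentially the same route as the paper, which offers no separate proof and treats the corollary as an immediate observation on the two cases of the proof of Corollary \ref{Stevens}: each case yields precisely the smaller of the two products (the ratio being $|A+A|/|A\cdot A|$), giving the minimum bound, after which the extremal statements follow by substitution exactly as you do. Note only that what you prove are the intended conclusions $|A\cdot A|\gg{|A|}^{1+\frac{1}{2}}$ and $|A+A|\gg{|A|}^{1+\frac{1}{2}}$; the right-hand sides ${|A\cdot A|}^{1+\frac{1}{2}}$ and ${|A+A|}^{1+\frac{1}{2}}$ in the paper's statement are typographical errors.
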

\subsection{Estimation of a special kind of energy}
In this part, we will focus on the estimation of a special kind of energy, which is first considered in \cite{Murphy2015variations}. After the estimation, we will see that this energy will provide another brilliant bound for Erd\"{o}s-Szemer\'{e}di Conjecture.
\begin{definition}[variant-slope set, $r_Q$, restricted variant-slope energy]
Let $Z,A_1,A_2$ be subsets of $\mathbb{F}_p$. Define \textit{variant-slope set} 
\[Q:=(A_1+A_2)/(A_1+A_2)=\left\{\frac{a_1+a_2}{a_1'+a_2'}\,\middle|\,a_1,a_1'\in A_1, a_2,a_2'\in A_2\right\},\]
a function
\[r_Q(z):=\left\{(a_1,a_1',a_2,a_2')\in A_1\times A_1\times A_2\times A_2\,\middle|\,z=\frac{a_1+a_2}{a_1'+a_2'}\right\},\]
and the \textit{restricted variant-slope energy}
\begin{equation*}
\begin{split}
R(Z,A_1,A_2):=&\sum_{z\in Z}{r_Q(z)}^2\\
=&\text{ the number of solutions to }``z=\frac{a_{1,1}+a_{2,1}}{a_{1,2}+a_{2,2}}=\frac{a_{1,3}+a_{2,3}}{a_{1,4}+a_{2,4}}"\\
&\text{ where }z\in Z,a_{i,j}\in A_i\text{ for all }i=1,2,j=1,2,3,4.
\end{split}
\end{equation*}
\end{definition}
Note that subtractive energy, defined as 
\[E^-(A):=\{a_1,a_2,a_3,a_4\in A\times A\times A\times A\mid a_1-a_2=a_3-a_4\},\]
is as same as additive energy, so a subtraction acts similar way as an addition from the perspective of energy. Also, notice that the collection of slopes of lines passing both $A_1\times A_1$ and $A_2\times A_2$ is 
\[\frac{A_1-A_2}{A_1-A_2}=\left\{\frac{a_1-a_2}{a_1'-a_2'}\,\middle|\,a_1,a_1'\in A_1,a_2,a_2'\in A_2\right\},\]
which is a dual version of $Q=(A_1+A_2)/(A_1+A_2)$. In other words, $Q$ is the collection of variant slopes from the energy's point of view. Furthermore, compared to Equation \eqref{eq1}, when we take $Z=(A_1-A_2)/(A_1-A_2)$ or $Z=\mathbb{F}_p$, the definition of $R(Z,A_1,A_2)$ is in energy form. For general $Z$, since the value of variant-slope is restricted in $Z$, we call $R(Z, A_1, A_2)$ restricted variant-slope energy, and it gives us a non-trivial bound for Erd\"{o}s-Szemer\'{e}di Conjecture in $\mathbb{F}_p$.
\par
First, without loss of generality, assume $|A_1|\leq|A_2|$. Then, there is a trivial upper bound
\[R(Z,A_1,A_2)\leq R(\mathbb{F}_p,A_1,A_2)\leq {|A_1|}^4{|A_2|}^3.\]
\par 
Next, we want to improve the upper bound of $R(Z, A_1, A_2)$. Boqing Xue (2021, \cite{Xue2021asymmetric}) solved a similar case in the real number for this question. Following the pace of Xue, Dung (2022, \cite{Dung2022reu}) proved the following two upper bounds in $\mathbb{F}_p$:
\begin{lemma}[\cite{Dung2022reu}]\label{Dung}
Let $Z,A_1,A_2\subseteq\mathbb{F}_p$ with $|A_1|\leq|A_2|,|Z|\leq {|A_1|}^2$, and ${|A_1|}^3\ll p^2$. Then, 
\begin{equation}\label{eq4}
\sum_{z\in Z}r_Q(z)\lesssim {|Z|}^{\frac{1}{2}}{|A_1|}^{\frac{33}{16}}{|A_2|}^{\frac{5}{4}}.
\end{equation}
Additionally, we have
\begin{equation}\label{eq5}
R(Z,A_1,A_2)\lesssim {|A_1|}^{\frac{33}{8}}{|A_2|}^{\frac{5}{2}}.
\end{equation}
\end{lemma}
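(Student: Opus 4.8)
The plan is to treat the energy bound \eqref{eq5} as the main estimate and to deduce the first-moment bound \eqref{eq4} from it. Since $R(Z,A_1,A_2)=\sum_{z\in Z}r_Q(z)^2$, the Cauchy--Schwarz inequality gives $\sum_{z\in Z}r_Q(z)\le |Z|^{1/2}R(Z,A_1,A_2)^{1/2}$, so \eqref{eq5} immediately yields \eqref{eq4} together with the factor $|Z|^{1/2}$. Conversely, \eqref{eq5} also follows from \eqref{eq4} applied to each level set $\{z:r_Q(z)\sim\tau\}$ by a dyadic argument, the $O(\log)$ loss being absorbed into $\lesssim$; thus the two bounds are essentially equivalent, and the whole problem reduces to the $Z$-free estimate $R(Z,A_1,A_2)\lesssim |A_1|^{33/8}|A_2|^{5/2}$.

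The engine is to read $r_Q(z)$ as an incidence quantity. Writing the defining relation $a_1+a_2=z(a_1'+a_2')$ as $a_1-za_1'=za_2'-a_2$, for each fixed $z$ a solution is exactly a pair consisting of one point of the grid $A_1\times A_1$ and one point of the grid $A_2\times A_2$ lying on a common line of slope $z$; hence $r_Q(z)=\sum_c j_z(c)\,N_z(c)$, where $j_z(c)$ and $N_z(c)$ count the points of the two grids on the slope-$z$ line with intercept $c$. To bound $R(Z)=\sum_{z\in Z}r_Q(z)^2$ I would decompose $Z$ dyadically by the size of $r_Q(z)$ and estimate $n_\tau:=|\{z\in Z:r_Q(z)\sim\tau\}|$. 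A lower bound $r_Q(z)\gtrsim\tau$ forces the slope-$z$ lines to be collectively rich against \emph{both} grids, and counting these rich lines is precisely what Theorem \ref{Szemeredi-Trotter_in_Fp} controls: one application to the larger grid $A_2\times A_2$ bounds the total incidence of the relevant line family, while a second application to the smaller grid $A_1\times A_1$ bounds how many $\tau$-rich lines can occur across the slopes in $Z$. Feeding the two estimates into $R(Z)\approx\sum_\tau\tau^2 n_\tau$ and optimising over the dyadic scale is designed to produce the exponents $\tfrac{33}{8}$ and $\tfrac{5}{2}$.

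The hypotheses of the lemma are exactly the inputs that let one run Theorem \ref{Szemeredi-Trotter_in_Fp} at every scale: the ordering $|A_1|\le|A_2|$ fixes which grid plays the role of $A\times B$ with $|A|\le|B|$; the bound $|Z|\le|A_1|^2$ keeps the number of admissible slopes below the grid size, so that the cardinality condition $|A||B|^2\le|\mathcal L|^3$ holds for the line sets produced at each level; and $|A_1|^3\ll p^2$ is what secures the $\mathbb F_p$-threshold $|A||\mathcal L|\ll p^2$ throughout. I expect the main obstacle to be precisely this bookkeeping rather than the incidence input itself: the line families arising at different dyadic scales have different sizes, and one must verify that the two competing bounds for $n_\tau$—the trivial one coming from $\sum_{z}r_Q(z)$ and the Szemer\'{e}di--Trotter one coming from the two-grid richness count—are each valid in their own range, and that the crossover scale is exactly the point realising the exponent $\tfrac{33}{8}$. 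Making this multi-parameter optimisation close while every $\mathbb F_p$-hypothesis remains satisfied is the delicate step.
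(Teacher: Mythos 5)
Your logical scaffolding is fine: the Cauchy--Schwarz deduction of \eqref{eq4} from \eqref{eq5} is valid, and your ``converse'' level-set argument is exactly how the paper itself passes from \eqref{eq4} to \eqref{eq5} (it applies \eqref{eq4} to $Z_t=\{z\in Z\mid r_Q(z)\geq t\}$, which inherits $|Z_t|\leq|Z|\leq{|A_1|}^2$, obtains $|Z_t|\ll{|A_1|}^{\frac{33}{8}}{|A_2|}^{\frac{5}{2}}t^{-2}$, and sums). Your reading of $r_Q(z)$ as $\sum_y r_1(z,y)r_2(z,y)$, a bilinear count over slope-$z$ lines against the two grids, is also the paper's starting point \eqref{eq6}. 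The only organizational difference is that the paper makes \eqref{eq4} primary and you make \eqref{eq5} primary; either would be acceptable \emph{if} the primary estimate were actually proved.

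The genuine gap is in your proposed proof of \eqref{eq5}: bounding $n_\tau$ using only rich-line counts in the two grids $A_1\times A_1$ and $A_2\times A_2$, slope by slope, is provably insufficient to reach the exponent $\frac{33}{8}$. Concretely, take $|A_1|=|A_2|=|A|$ and $|Z|=|A|^2$, and consider a hypothetical configuration in which each of $n=|A|^2$ slopes $z\in Z$ carries $m=|A|^{\frac{5}{3}}$ lines of slope $z$, each containing $\alpha=|A|^{\frac{1}{3}}$ points of each grid. This satisfies every constraint your scheme can impose: $m\alpha\leq|A|^2$ (lines of a fixed slope are parallel, so they partition the grid), $n\leq|Z|$, and the total number of $\alpha$-rich lines is $nm=|A|^{\frac{11}{3}}$, matching the rich-line bound ${|A|}^5/\alpha^4={|A|}^{\frac{11}{3}}$ from Theorem \ref{Szemeredi-Trotter_in_Fp} in both grids. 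Yet it forces $r_Q(z)\gtrsim m\alpha^2=|A|^{\frac{7}{3}}$ for all $n$ slopes, hence $R(Z,A_1,A_2)\gtrsim n\left(m\alpha^2\right)^2=|A|^{\frac{20}{3}}$, while the target is $|A|^{\frac{33}{8}+\frac{5}{2}}=|A|^{\frac{53}{8}}$, and $\frac{20}{3}=\frac{160}{24}>\frac{159}{24}=\frac{53}{8}$. So no bookkeeping or dyadic optimization over your two families of rich-line bounds can close the argument. What excludes this configuration in the paper is the mixed second-moment bound \eqref{eq13}, $\sum_{(z,y)\in\mathcal{P}}{r_1(z,y)}^2\ll{|Z|}^{\frac{1}{2}}{|A_1|}^{\frac{13}{4}}$ (the configuration would have this sum at least $nm\alpha^2=|A|^{\frac{13}{3}}$, exceeding $|A|^{\frac{17}{4}}$): its proof in \eqref{eq11}--\eqref{eq12} applies Theorem \ref{Szemeredi-Trotter_in_Fp} to the point grids $A_1\times Z$ or $Z\times A_1$ --- grids in which the slope variable is itself a coordinate, so information is coupled \emph{across} slopes --- with lines parametrized by pairs from $A_1\times A_1$. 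That $Z$-grid incidence input, inserted through the H\"{o}lder splitting \eqref{eq9} as $\left(\sum r_1\right)^{\frac{1}{2}}\left(\sum r_1^2\right)^{\frac{1}{4}}\left(\sum r_2^4\right)^{\frac{1}{4}}$ together with the fourth-moment bound \eqref{eq14}, is what yields ${|Z|}^{\frac{5}{8}}{|A_1|}^{\frac{29}{16}}{|A_2|}^{\frac{5}{4}}$; it is the ingredient your plan lacks. Note also that the hypothesis $|Z|\leq{|A_1|}^2$ is not used to verify $|A|{|B|}^2\leq{|\mathcal{L}|}^3$ as you suggest, but in the final conversion ${|Z|}^{\frac{5}{8}}{|A_1|}^{\frac{29}{16}}\leq{|Z|}^{\frac{1}{2}}{|A_1|}^{\frac{33}{16}}$ and in legitimizing the $A_1\times Z$ incidence application.
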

\begin{proof}
To prove this claim, note that
\begin{equation}\label{eq6}
\begin{split}
&\sum_{z\in Z}r_Q(z)\\
&=\sum_{z\in Z}\sum_{y\in\mathbb{F}_p}\left|\left\{(a_1,a_1',a_2,a_2')\,\middle|\,a_1-a_1'z=a_2'z-a_2=y,a_i,a_i'\in A_i\right\}\right|\\
&=\sum_{(z,y)\in\mathcal{P}}r_1(z,y)r_2(z,y)
\end{split}
\end{equation}
where 
\[\mathcal{P}:=\{(z,y)\mid z\in Z,y\in(A_1-zA_1)\cap(A_2-zA_2)\},\]
\[r_1(z,y):=\left|\left\{(a,a')\in A_1\times A_1\mid a-a'z=y\right\}\right|,\]
and
\[r_2(z,y):=\left|\left\{(a,a')\in A_2\times A_2\mid a'z-a=y\right\}\right|.\]
\par 
Next, we split the summation in Equation \eqref{eq6} into four parts $S_{1,1},S_{2,1},S_{2,2}$, and $S_{1,2}$ with 
\[S_{j_1,j_2}:=\sum_{\substack{(z,y)\in\mathcal{P}\\r_1(z,y)\in I_{j_1}\\r_2(z,y)\in I_{j_2}}}r_1(z,y)r_2(z,y)\]
where $j_1,j_2\in\{1,2\}$ and $I_1=\{1\},I_2=\{2,3,\dotsc\}$. Now, we will bound $S_{1,1},S_{2,1},S_{2,2}$, and $S_{1,2}$, separately:
\begin{enumerate}
\item For $S_{1,1}$, we know that
\begin{equation}\label{eq7}
S_{1,1}\leq|\mathcal{P}|\leq|Z|{|A_1|}^2.
\end{equation}
\item For $S_{2,1}$, we obtain
\begin{equation}\label{eq8}
\begin{split}
S_{2,1}&\leq\sum_{(z,y)\in\mathcal{P}}r_1(z,y)\leq|\{(z,y,a_1,a_1')\in Z\times\mathbb{F}_p\mid a_1-a_1'z=y\}|\\
&=|Z|{|A_1|}^2.
\end{split}
\end{equation}
\item For $S_{2,2}$, applying H\"{o}lder inequality several times, we see that
\begin{equation}\label{eq9}
\begin{split}
S_{2,2}&=\sum_{\substack{(z,y)\in\mathcal{P}\\r_1(z,y),r_2(z,y)\geq2}}r_1(z,y)r_2(z,y)\\
&\leq {\left(\sum_{\substack{(z,y)\in\mathcal{P}\\r_1(z,y),r_2(z,y)\geq2}}{r_1(z,y)}^{\frac{4}{3}}\right)}^{\frac{3}{4}}{\left(\sum_{\substack{(z,y)\in\mathcal{P}\\r_2(z,y)\geq2}}{r_2(z,y)}^4\right)}^{\frac{1}{4}}\\
&\leq {\left(\sum_{(z,y)\in\mathcal{P}}r_1(z,y)\right)}^{\frac{1}{2}}{\left(\sum_{(z,y)\in\mathcal{P}}{r_1(z,y)}^2\right)}^{\frac{1}{4}}{\left(\sum_{(z,y)\in\mathcal{P}}{r_2(z,y)}^4\right)}^{\frac{1}{4}}
\end{split}
\end{equation}
Next, we separately estimate these three parts:
\begin{enumerate}
\item For the first part, we have
\begin{equation}\label{eq10}
\sum_{(z,y)\in\mathcal{P}}r_1(z,y)\leq |Z|{|A_1|}^2.
\end{equation}
\item Secondly, for $a\in A_1$, we define
\[r_{1,a}(z):=\left|\left\{(a_1,a_1',a_1'')\in A\times A\times A\,\middle|\,a_1''-a_1'=z(a_1-a)\right\}\right|.\]
Then, 
\[\sum_{y\in\mathbb{F}_p}{r_1(z,y)}^2=\sum_{a\in A_1}r_{1,a}(z).\]
According to the relation between $|A_1|$ and $|Z|$, divide it into two cases:
\begin{enumerate}
\item Assume $|A_1|\leq|Z|$:
\par
For a fixed $a\in A_1$, let $\mathcal{L}_a$ be the collection of lines of the form $l:a_1'-x=y(a-a_1)$ where $(a_1,a_1')\in A_1\times A_1$. Set $A:=A_1,B:=Z,\mathcal{L}:=\mathcal{L}_a$ and notice that $A,B,\mathcal{L}_a$ satisfy the assumptions of Theorem \ref{Szemeredi-Trotter_in_Fp}. Thus, 
\begin{equation}\label{eq11}
\begin{split}
\sum_{(z,y)\in\mathcal{P}}{r_1(z,y)}^2&=\sum_{z\in Z}\sum_{a\in A_1}r_{1,a}(z)\leq\sum_{a\in A_1}\mathcal{I}(A_1\times Z,\mathcal{L}_a)\\
&\ll\sum_{a\in A_1}{|A_1|}^{\frac{3}{4}}{|Z|}^{\frac{1}{2}}{|A_1|}^{\frac{3}{2}}+{|A_1|}^2\\
&\ll {|Z|}^{\frac{1}{2}}{|A_1|}^{\frac{13}{4}}.
\end{split}
\end{equation}
\item On the other hand, assume $|A_1|\geq|Z|$:
\par
The proving technique is similar to the above. But this time, define $\mathcal{L}_a$ as the family of lines in the form 
$l:a_1'-y=x(a_1-a)$ where $a_1,a_1'\in A_1$ and set $A:=Z,B:=A_1,\mathcal{L}:=\mathcal{L}_a$. Applying Theorem \ref{Szemeredi-Trotter_in_Fp}, we obtain 
\begin{equation}\label{eq12}
\begin{split}
\sum_{(z,y)\in\mathcal{P}}{r_1(z,y)}^2&\leq\sum_{z\in Z}\sum_{a\in A_1}r_{1,a}(z)\leq\sum_{a\in A_1}\mathcal{I}(Z\times A_1,\mathcal{L}_a)\\
&\ll\sum_{a\in A_1}{|Z|}^{\frac{3}{4}}{|A_1|}^{\frac{1}{2}}{|A_1|}^{\frac{3}{2}}+{|A_1|}^2\\
&\ll {|Z|}^{\frac{3}{4}}{|A_1|}^3\leq {|Z|}^{\frac{1}{2}}{|A_1|}^{\frac{13}{4}}.
\end{split}
\end{equation}
\end{enumerate}
Hence, by Equation \eqref{eq11} and \eqref{eq12}, we have
\begin{equation}\label{eq13}
\sum_{(z,y)\in\mathcal{P}}{r_1(z,y)}^2\ll {|Z|}^{\frac{1}{2}}{|A_1|}^{\frac{13}{4}}.
\end{equation}
\item As for the last part, let $\mathcal{L}=\{l_{z_0,y_0}:y=z_0x+y_1\mid(z_0,y_0)\in\mathcal{P}\}$. For $2\leq k\leq|A_i|$, let $\mathcal{L}_{i,k}$ be the set of lines in $\mathcal{L}$ that contains at least $k$ points in $A_i\times A_i$. Using Theorem \ref{Szemeredi-Trotter_in_Fp}, we obtain
\begin{equation*}
\begin{split}
&k|\mathcal{L}_{i,k}|\leq\mathcal{I}(A_i\times A_i,\mathcal{L})\ll {|A_i|}^{\frac{3}{4}}{|A_i|}^{\frac{1}{2}}{|\mathcal{L}_{i,k}|}^{\frac{3}{4}}+|\mathcal{L}_{i,k}|\\
&\Rightarrow|\mathcal{L}_{i,k}|\ll\frac{{|A_i|}^5}{k^4}.
\end{split}
\end{equation*}
Thus, by the above estimate, we get
\begin{equation}\label{eq14}
\begin{split}
\sum_{\substack{(z,y)\in\mathcal{P}\\r_2(z,y)\geq2}}{r_2(z,y)}^4&=\sum_{2\leq k\leq|A_2|}k^4\left|\left\{(z,y)\in\mathcal{P}\mid r_2(z,y)=k\right\}\right|\\
&\ll\sum_{2\leq k\leq|A_2|}k^3|\mathcal{L}_{2,k}|\ll\sum_{2\leq k\leq|A_2|}k^{-1}{|A_2|}^5\\
&\lesssim {|A_2|}^5.
\end{split}
\end{equation}
\end{enumerate}
Combining Equation \eqref{eq9}, \eqref{eq10},\eqref{eq13}, and \eqref{eq14}, we know that 
\begin{equation}\label{eq15}
S_{2,2}\lesssim {\left(|Z|{|A_1|}^2\right)}^{\frac{1}{2}}{\left({|Z|}^{\frac{1}{2}}{|A_1|}^{\frac{13}{4}}\right)}^{\frac{1}{4}}{\left({|A_2|}^5\right)}^{\frac{1}{4}}={|Z|}^{\frac{5}{8}}{|A_1|}^{\frac{29}{16}}{|A_2|}^{\frac{5}{4}}.
\end{equation}
\item For $S_{1,2}$, applying Cauchy-Schwarz inequality twice and Equation \eqref{eq14}, we get
\begin{equation}\label{eq16}
\begin{split}
S_{1,2}&\leq\sum_{\substack{(z,y)\in\mathcal{P}\\r_2(z,y)\geq2}}r_2(z,y)\leq {|\mathcal{P}|}^{\frac{1}{2}}{\left(\sum_{\substack{(z,y)\in\mathcal{P}\\r_2(z,y)\geq2}}{r_2(z,y)}^2\right)}^{\frac{1}{2}}\\
&\leq {|\mathcal{P}|}^{\frac{3}{4}}{\left(\sum_{\substack{(z,y)\in\mathcal{P}\\r_2(z,y)\geq2}}{r_2(z,y)}^4\right)}^{\frac{1}{4}}\lesssim {|Z|}^{\frac{3}{4}}{|A_1|}^{\frac{3}{2}}{|A_2|}^{\frac{5}{4}}.
\end{split}
\end{equation}
\end{enumerate}
By the estimate of Equation \eqref{eq7}, \eqref{eq8}, \eqref{eq15}, and \eqref{eq16}, we get
\begin{equation}\label{eq17}
\begin{split}
\sum_{z\in Z}r_Q(z)&=S_{1,1}+S_{1,2}+S_{2,1}+S_{2,2}\\
&\lesssim|Z|{|A_1|}^2+|Z|{|A_1|}^2+{|Z|}^{\frac{5}{8}}{|A_1|}^{\frac{29}{16}}{|A_2|}^{\frac{5}{4}}+{|Z|}^{\frac{3}{4}}{|A_1|}^{\frac{3}{2}}{|A_2|}^{\frac{5}{4}}\\
&\lesssim {|Z|}^{\frac{5}{8}}{|A_1|}^{\frac{29}{16}}{|A_2|}^{\frac{5}{4}}\lesssim {|Z|}^{\frac{1}{2}}{|A_1|}^{\frac{33}{16}}{|A_2|}^{\frac{5}{4}}
\end{split}
\end{equation}
This completes the proof of Equation \eqref{eq4}. 
\par
Next, to prove Equation \eqref{eq5}, define 
\[Z_t:=\{z\in Z\mid r_Q(z)\geq t\}\text{ for any }t\geq1.\]
Note that 
\[|Z_t|\leq|Z|\leq {|A_1|}^2.\]
Replacing $Z$ with $Z_t$ and applying \eqref{eq17}, we have
\[t|Z_t|\leq\sum_{z\in Z_t}r_Q(z)\ll {|Z_t|}^{\frac{1}{2}}{|A_1|}^{\frac{33}{16}}{|A_2|}^{\frac{5}{4}}.\]
That is, 
\[|Z_t|\ll\frac{{|A_1|}^{\frac{33}{8}}{|A_2|}^{\frac{5}{2}}}{t^2}.\]
As a result, 
\begin{equation*}
\begin{split}
R(Z,A_1,A_2)&=\sum_{z\in Z}{r_Q(z)}^2=\sum_{t\leq {|A_1|}^2|A_2|}t^2|\{z\mid r_Q(z)=t\}|\\
&\ll\sum_{t\leq {|A_1|}^2|A_2|}t|Z_t|\ll\sum_{t\leq {|A_1|}^2|A_2|}t^{-1}{|A_1|}^{\frac{33}{8}}{|A_2|}^{\frac{5}{4}}\\
&\lesssim {|A_1|}^{\frac{33}{8}}{|A_2|}^{\frac{5}{4}},
\end{split}
\end{equation*}
which completes the proof of Equation \eqref{eq5}.
\end{proof}
With Lemma \ref{Dung}, we can start to estimate the relation between the sum set and the product set. The following results are also motivated by \cite{Murphy2015variations},\cite{Stevens2017improved}, and \cite{Xue2021asymmetric}. To be more specific, Xue studied Erd\"{o}s-Szemer\'{e}di Conjecture in the real number while Stevens and De Zeeuw proved several useful Incidence Geometry results in arbitrary fields. Thus, we combine their works to obtain Proposition \ref{third_additive_energy_estimate} and Theorem \ref{my_first_result}, which are the $\mathbb{F}_p$-version of Proposition 3.2 and Theorem 1.6 in \cite{Xue2021asymmetric}.
\begin{definition}[$n$th-order energy]
Previously, the energy we considered, which is called second-order energy, is equal to 
\begin{equation*}
\begin{split}
E^+(A,B)&:=\left|\left\{(a,a',b,b')\in A\times A\times B\times B\mid a-a-b=a'-b'\right\}\right|\\
&=\sum_{z\in A-B}{r_{A-B}(z)}^2
\end{split}
\end{equation*}
where $r_{A-B}(z):=\left|\{(a,b)\in A\times B\mid z=a-b\}\right|$. Now, we generalize this concept into \textit{$n$th-order energy}, which is defined as follows:
\begin{equation*}
\begin{split}
E_n^+(A,B)&:=\left|\left\{(a_1,\dotsc,a_n,b_1,\dotsc,b_n)\in A^n\times B^n\mid a_1-b_1=\dotsb=a_n-b_n\right\}\right|\\
&=\sum_{z\in A-B}{r_{A-B}(z)}^n.
\end{split}
\end{equation*}
\end{definition}
\begin{proposition}\label{third_additive_energy_estimate}
Let $A,B\subseteq\mathbb{F}_p$. Suppose that $|A|\lesssim|B|\ll p^{\frac{2}{5}}$. Then, 
\[{E_3^+(A,B)}^{\frac{4}{3}}{|B|}^{-4}\lesssim|A\cdot A|.\]
\end{proposition}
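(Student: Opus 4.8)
The plan is to establish the equivalent estimate $E_3^+(A,B)\lesssim |B|^3\,|A\cdot A|^{3/4}$, which follows from the stated inequality upon raising both sides to the power $3/4$. Since additive and subtractive energies coincide, I would first replace $B$ by $-B$ (which changes neither $|B|$ nor $|A\cdot A|$) and work with the sumset, writing $E_3^+(A,B)=\sum_{s}r_{A+B}(s)^3$ with $r_{A+B}(s)=|\{(a,b)\in A\times B\mid a+b=s\}|$. The natural opening move is a dyadic argument in the spirit of Solymosi's proof: partition the popular sums according to the size of $r_{A+B}$ and use the pigeonhole principle to isolate a single dyadic level $P=\{s: r_{A+B}(s)\sim\Delta\}$ which carries, up to a factor of $\log|A|$, the full third energy, so that $E_3^+(A,B)\lesssim \Delta^3|P|$.

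The core of the argument is to bound $\Delta^3|P|$ by feeding the variant-slope energy of Lemma \ref{Dung} with the multiplicative structure of $A$. One device I would try is that ratios of popular sums lie in the variant-slope set $Q=(A+B)/(A+B)$: counting the quadruples whose two sums both lie in $P$ in two different ways produces $\Delta^2|P|^2\le \sum_{\lambda\in P/P}r_Q(\lambda)$, after which Lemma \ref{Dung}, Equation \eqref{eq4} (with $A_1=A$, $A_2=B$, and $Z=P/P$) gives a bound of the shape $\Delta^2|P|^2\lesssim |P/P|^{1/2}|A|^{33/16}|B|^{5/4}$. To invoke the lemma I must verify its hypotheses: $|A|\le|B|$ holds by assumption, the condition $|A|^3\ll p^2$ follows from $|B|\ll p^{2/5}$, and the constraint $|Z|\le|A|^2$ must be checked (or enforced by splitting off the regime where the relevant ratio set is too large and treating it by hand). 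The product set $A\cdot A$ is then meant to enter through the multiplicative form of the Cauchy--Schwarz inequality \eqref{eq2}, namely $|A|^4\le |A\cdot A|\,E^\times(A)$, which is the standard mechanism for turning multiplicative/ratio information into the factor $|A\cdot A|$; a final optimization over the dyadic parameter $\Delta$ should recover the exponent $3/4$.

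The step I expect to be the main obstacle is exactly the passage that introduces $|A\cdot A|$ with the correct exponent. The quantity emerging from Lemma \ref{Dung} is phrased purely in terms of $|A|$, $|B|$, and the size of an auxiliary ratio set, and matching this to the product set is not a single clean inequality but the delicate asymmetric bookkeeping characteristic of Xue's method; getting the powers of $|B|$ and $|A\cdot A|$ to land simultaneously is the crux, and it is likely that the ratio-of-popular-sums reduction above must be replaced or supplemented by an incidence configuration encoding the product set directly, in the manner of Corollary \ref{Stevens}. A secondary but genuine difficulty is respecting the dimension constraints throughout: one must keep every incidence configuration below the threshold imposed by $|B|\ll p^{2/5}$, since the finite-field Szemer\'edi--Trotter bound \ref{Szemeredi-Trotter_in_Fp} underlying Lemma \ref{Dung}, unlike its real counterpart, is only available in this restricted range, and one must control the size of the ratio set relative to $|A|^2$ so that Equation \eqref{eq4} may legitimately be applied.
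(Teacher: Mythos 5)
Your opening move --- the dyadic pigeonhole isolating a popular level set $P$ of $r_{A-B}$ with $E_3^+(A,B)\approx t^3|P|$ --- is exactly how the paper's proof begins, and your instinct that Cauchy--Schwarz in the multiplicative form of \eqref{eq2} is the mechanism that must introduce the product set is also correct. But the reduction you build between these two points is a dead end, and the obstacle you flag at the end is precisely the idea you are missing, not a matter of bookkeeping. Your inequality $\Delta^2|P|^2\le\sum_{\lambda\in P/P}r_Q(\lambda)$ restricts slopes to ratios of \emph{popular sums} and counts representations over $(A,B)$; after applying Equation \eqref{eq4} this yields a bound purely in terms of $|A|,|B|,|P|$, and there is no route back to $|A\cdot A|$: nothing in this configuration produces an \emph{upper} bound on any multiplicative energy, which is what the Cauchy--Schwarz step needs as input. (A secondary but real problem: $|P|$ can be as large as $|A||B|/t$, so $|P/P|$ can greatly exceed $|A|^2$ and the hypothesis $|Z|\le|A_1|^2$ of Lemma \ref{Dung} genuinely fails for $Z=P/P$.)

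The paper's proof inserts a \emph{second} dyadic pigeonhole and then runs your slope-restriction idea the other way around. One extracts a set of popular abscissae $A_1:=\{a\in A\mid q\le r_{P+B}(a)<2q\}\subseteq A$ with $|A_1|q\approx|P|t$; then every multiplicative quadruple $a_1/a_2=a_3/a_4$ with $a_i\in A_1$ lifts, via the $\approx q$ representations $a_i=x_i+b_i$ with $x_i\in P$, $b_i\in B$, to $\approx q^4$ solutions of
\[
\frac{x_1+b_1}{x_2+b_2}=\frac{x_3+b_3}{x_4+b_4}\in A_1/A_1 .
\]
This gives $E^\times(A_1)\,q^4\lesssim R(A_1/A_1,P,B)$: the variant-slope energy is applied to the pair $(P,B)$ with slopes restricted to $A_1/A_1$, rather than to $(A,B)$ with slopes in $P/P$. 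Now $E^\times(A_1)\ge{|A_1|}^4/|A_1\cdot A_1|\ge{|A_1|}^4/|A\cdot A|$ brings in the product set, and the required upper bound on $R$ comes from two different lemmas according to the size of $P$: when $|P|\le|B|$ one uses Lemma \ref{lemma3.2.7}, $R(\mathbb{F}_p,P,B)\lesssim{|P|}^{\frac{11}{3}}{|B|}^3$ --- this case, which your sketch never touches, is what produces the factor ${|B|}^3$ in the final bound --- and when $|P|>|B|$ one uses Lemma \ref{Dung} with the roles of the two sets swapped, which is legitimate because $|A_1/A_1|\le{|A|}^2\lesssim{|B|}^2$. Unwinding $q\approx|P|t/|A_1|$ in both cases yields $E_3^+(A,B)\approx|P|t^3\lesssim{|A\cdot A|}^{\frac{3}{4}}{|B|}^3$, which is the stated estimate. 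So your toolbox is the right one, but the configuration that makes it click --- popular abscissae inside $A$, multiplicative quadruples lifted into $R(\cdot,P,B)$, and Lemma \ref{lemma3.2.7} alongside Lemma \ref{Dung} --- is absent from your proposal.
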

To prove this proposition, we will need the following Lemmas:
\begin{lemma}\label{lemma3.2.5}
Let $A,B\subseteq\mathbb{F}_p$ with $|A|\leq|B|\ll p^{\frac{2}{5}}$ and 
\[\mathcal{L}:=\{l_{a,b}:y=ax+b\mid(a,b)\in A\times B\}.\]
Then, for any $\mathcal{P}\subseteq\mathbb{F}_p^2$ with $|A|{|B|}^2\leq {|\mathcal{P}|}^3$ and $|A||\mathcal{P}|\ll p^2$, we have
\[\mathcal{I}(\mathcal{P},\mathcal{L})\ll {|A|}^{\frac{3}{4}}{|B|}^{\frac{1}{2}}{|\mathcal{P}|}^{\frac{3}{4}}+|\mathcal{P}|.\]
\end{lemma}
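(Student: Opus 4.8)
The plan is to deduce this lemma from the Szemer\'{e}di--Trotter type theorem in $\mathbb{F}_p$ (Theorem \ref{Szemeredi-Trotter_in_Fp}) by \emph{point--line duality}. The point of the observation is that the present configuration is exactly the dual of the one treated in that theorem: here it is the \emph{line} set $\mathcal{L}$ that carries the Cartesian-product structure (slopes in $A$, intercepts in $B$) while the point set $\mathcal{P}$ is arbitrary, whereas Theorem \ref{Szemeredi-Trotter_in_Fp} puts the product structure on the \emph{point} set. Since incidences are preserved under the standard affine duality, the bound should transfer directly.

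First I would set up the duality explicitly. Send each line $l_{a,b}:y=ax+b$ to the dual point $(a,b)$, so that $\mathcal{L}$ corresponds to the Cartesian product $A\times B$ in the dual plane with coordinates $(\alpha,\beta)$. Send each point $(s,t)\in\mathcal{P}$ to the dual line $l^*_{s,t}:\beta=-s\alpha+t$. The map $(s,t)\mapsto l^*_{s,t}$ is injective, so the resulting line set $\mathcal{L}^*$ has exactly $|\mathcal{P}|$ lines. Because the incidence relation $t=as+b$ rearranges to $b=-sa+t$, a point $(s,t)$ lies on $l_{a,b}$ if and only if the dual point $(a,b)$ lies on the dual line $l^*_{s,t}$; hence this correspondence preserves incidences and gives $\mathcal{I}(\mathcal{P},\mathcal{L})=\mathcal{I}(A\times B,\mathcal{L}^*)$. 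Note that $\mathcal{L}$ consists of graphs $y=ax+b$ and the dual lines $\beta=-s\alpha+t$ also have finite slope, so no vertical lines arise and the affine duality is clean.

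Next I would verify that the dual configuration $(A\times B,\mathcal{L}^*)$ meets the three hypotheses of Theorem \ref{Szemeredi-Trotter_in_Fp}: the inequality $|A|\leq|B|$ holds by assumption; the condition $|A||B|^2\leq|\mathcal{L}^*|^3=|\mathcal{P}|^3$ is precisely the hypothesis $|A||B|^2\leq|\mathcal{P}|^3$; and $|A||\mathcal{L}^*|=|A||\mathcal{P}|\ll p^2$ is the remaining hypothesis. Applying Theorem \ref{Szemeredi-Trotter_in_Fp} then yields $\mathcal{I}(A\times B,\mathcal{L}^*)\ll|A|^{\frac{3}{4}}|B|^{\frac{1}{2}}|\mathcal{P}|^{\frac{3}{4}}+|\mathcal{P}|$, which is exactly the claimed bound once the duality is undone.

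There is no genuine obstacle here; the argument is essentially a bookkeeping of the duality, and the only points demanding care are the injectivity that guarantees $|\mathcal{L}^*|=|\mathcal{P}|$ and the confirmation that the three translated conditions really coincide with the lemma's hypotheses. I would note in passing that the extra assumption $|B|\ll p^{\frac{2}{5}}$ is not consumed by this argument beyond what the three conditions above already supply; it is carried along only for compatibility with how the lemma is invoked in the sequel.
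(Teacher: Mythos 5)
Your proposal is correct and is essentially the paper's own proof: the paper likewise dualizes by taking $\mathcal{P}':=A\times B$ as the point set and $\mathcal{L}':=\{-y=cx-d\mid(c,d)\in\mathcal{P}\}$ as the line set, then applies Theorem \ref{Szemeredi-Trotter_in_Fp}. Your write-up just spells out the incidence-preservation, injectivity, and hypothesis-checking that the paper leaves implicit.
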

\begin{proof}
Define $\mathcal{P}':=A\times B$ and $\mathcal{L}':=\{l_{c,d}:-y=cx-d\mid (c,d)\in\mathcal{P}\}$. Then, $I(\mathcal{P},\mathcal{L})=I(\mathcal{P}',\mathcal{L}')$. Thus, by theorem \ref{Szemeredi-Trotter_in_Fp}, we know that 
\[\mathcal{I}(\mathcal{P},\mathcal{L})=\mathcal{I}(\mathcal{P}',\mathcal{L}')\ll {|A|}^{\frac{3}{4}}{|B|}^{\frac{1}{2}}{|\mathcal{P}|}^{\frac{3}{4}}+|\mathcal{P}|.\]
\end{proof}
\begin{lemma}\label{lemma3.2.6}
Let $A,B,X\subseteq\mathbb{F}_p$ such that $|X|\leq|A||B|$ and $|A|\leq|B|\ll p^{\frac{2}{5}}$. Then, 
\[\sum_{x\in X}E^+(A,xB)\ll {|A|}^{\frac{5}{3}}{|B|}^{\frac{4}{3}}{|X|}^{\frac{2}{3}}.\]
\end{lemma}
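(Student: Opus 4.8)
The plan is to read $\sum_{x\in X}E^+(A,xB)$ as a weighted point--line incidence count and then run a dyadic rich-point argument powered by Theorem~\ref{Szemeredi-Trotter_in_Fp}. First I would unfold the energy: for $x\neq0$ the map $b\mapsto xb$ is a bijection, so
\[E^+(A,xB)=\left|\left\{(a_1,a_2,b_1,b_2)\in A^2\times B^2\mid a_1+xb_1=a_2+xb_2\right\}\right|.\]
Reading the common value $y=a_1+xb_1=a_2+xb_2$ as the ordinate of a point $(x,y)$, this says that $(x,y)$ lies on both lines $y=b_1x+a_1$ and $y=b_2x+a_2$ drawn from the family $\mathcal{L}:=\{\,y=bx+a\mid a\in A,\ b\in B\,\}$, with $|\mathcal{L}|\le|A||B|$. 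Writing $r(x,y)$ for the number of lines of $\mathcal{L}$ through $(x,y)$ and $\mathcal{P}$ for the set of points with abscissa in $X$ lying on at least one line, I obtain
\[\sum_{x\in X}E^+(A,xB)=\sum_{p\in\mathcal{P}}r(p)^2,\qquad\text{while}\qquad\sum_{p\in\mathcal{P}}r(p)=|X||A||B|,\]
the second being the exact total-incidence identity (for each fixed $x$ one has $\sum_y r(x,y)=|A\times B|$). The $x=0$ term and the diagonal $a_1=a_2,\ b_1=b_2$ together contribute at most $|X||A||B|$, which lies below the target and is harmless.

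Next I would estimate the rich points $\mathcal{P}_k:=\{p\in\mathcal{P}\mid r(p)\ge k\}$; since $r(p)\le|A|$, only $1\le k\le|A|$ occurs. By point--line duality the incidences between $\mathcal{P}_k$ and $\mathcal{L}$ equal those between the Cartesian point set $B\times A$ and a family of $|\mathcal{P}_k|$ lines (with slopes in $-X$). Applying Theorem~\ref{Szemeredi-Trotter_in_Fp} with the smaller factor $A$ yields
\[k\,|\mathcal{P}_k|\le\mathcal{I}(\mathcal{P}_k,\mathcal{L})\ll|A|^{3/4}|B|^{1/2}|\mathcal{P}_k|^{3/4},\qquad\text{hence}\qquad|\mathcal{P}_k|\ll\frac{|A|^{3}|B|^{2}}{k^{4}}.\]
Its two hypotheses $|A||B|^2\le|\mathcal{P}_k|^3$ and $|A||\mathcal{P}_k|\ll p^2$ must be verified on the range actually used; the second is exactly where $|A|\le|B|\ll p^{2/5}$ and $|X|\le|A||B|$ enter, forcing $|A||\mathcal{P}_k|\ll p^{28/15}\ll p^2$.

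I would then balance the two estimates inside the decomposition $\sum_{p}r(p)^2\ll\sum_{\text{dyadic }k}k^2|\mathcal{P}_k|$. The total-incidence identity gives $k^2|\mathcal{P}_k|\le k\,|X||A||B|$, increasing in $k$, while Theorem~\ref{Szemeredi-Trotter_in_Fp} gives $k^2|\mathcal{P}_k|\ll|A|^3|B|^2/k^2$, decreasing in $k$. Using the first bound for small $k$ and the second for large $k$, the crossover sits at $k_*\approx(|A|^2|B|/|X|)^{1/3}$, where both contributions equal $|A|^{5/3}|B|^{4/3}|X|^{2/3}$; since both dyadic series are geometric they are dominated by this crossover value with no logarithmic loss, which is the claim. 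In the complementary regime $k_*>|A|$ the entire sum falls in the first branch and the identity alone yields $\sum_p r(p)^2\le|A|^2|B||X|$, which is $\le|A|^{5/3}|B|^{4/3}|X|^{2/3}$ precisely because $k_*>|A|$ forces $|A||X|\le|B|$.

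The main obstacle is the bookkeeping around this incidence input: I must confirm that the hypotheses of Theorem~\ref{Szemeredi-Trotter_in_Fp} genuinely hold at every dyadic scale fed into the rich-point bound, the critical point being that $|\mathcal{P}_k|\ge(|A||B|^2)^{1/3}$ throughout $k\le|A|$, which is exactly where $r(p)\le|A|$ together with $|B|\ge|A|$ leaves room (note $(|A|^2|B|)^{1/3}\ge|A|$), and that the few scales or degenerate terms escaping this range are of strictly lower order, so that the final estimate survives as a clean $\ll$ rather than merely $\lesssim$. This mirrors the dyadic step in the proof of Lemma~\ref{Dung} leading to the rich-line bound $|\mathcal{L}_{i,k}|\ll|A_i|^5/k^4$, where the same self-consistency between the maximal richness and the Szemer\'{e}di--Trotter hypotheses is invoked.
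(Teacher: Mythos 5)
Your proposal is correct and follows essentially the same route as the paper's proof: rewriting the energy sum as $\sum_{p}r(p)^2$, establishing the rich-point bound $|\mathcal{P}_k|\ll|A|^3|B|^2/k^4$ by dualizing to a Cartesian product point set and invoking Theorem~\ref{Szemeredi-Trotter_in_Fp} (this is exactly the content of Lemma~\ref{lemma3.2.5} and the Claim inside the paper's proof), then balancing the trivial incidence identity against the rich-point bound at the crossover $k_*\approx(|A|^2|B|/|X|)^{1/3}$, which coincides with the paper's threshold $\Delta\sim|A|^{\frac{2}{3}}|B|^{\frac{1}{3}}|X|^{-\frac{1}{3}}$. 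The one imprecision is your assertion that $|\mathcal{P}_k|\ge(|A||B|^2)^{1/3}$ holds throughout $k\le|A|$ --- it need not (e.g.\ $\mathcal{P}_k$ may be empty); the correct statement, and the paper's, is a dichotomy: either $|A||B|^2\le|\mathcal{P}_k|^3$ and the incidence theorem applies, or $|\mathcal{P}_k|<(|A||B|^2)^{1/3}\le|A|^3|B|^2/k^4$ for all $k\le|A|\le|B|$, so the rich-point bound holds trivially in that case as well.
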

\begin{proof}
Note that 
\begin{equation}\label{eq18}
\sum_{x\in X}E^+(A,xB)=\sum_{x\in X}\sum_{y\in\mathbb{F}_p}r_{A+xB}^2(y).
\end{equation}
\begin{claim}
Let $R_t:=\{(x,y)\mid r_{A+xB}(y)\geq t\}$. Then, for any integer $2\leq t\leq|A|$, 
\[|R_t|\ll\frac{{|A|}^3{|B|}^2}{t^4}.\]
\end{claim}
Note that for $t>|A|$, $R_t=\emptyset$. Define a collection of lines $\mathcal{L}:=\{l_{a,b}:y=ax+b\mid (a,b)\in A\times B\}$. Since $r_{A+xB}(y)=|\{(a,b)\mid y=ax+b\}|$. Thus, for every pair $(x,y)\in R_t$, $|\{l_{a,b}\mid (x,y)\in l_{a,b}\}|\geq t$. Divide it into two cases:
\begin{enumerate}
\item Assume $|A|{|B|}^2\leq {|R_t|}^3$ so that we can apply lemma \ref{lemma3.2.5}. By lemma \ref{lemma3.2.5}, 
\begin{equation*}
\begin{split}
&t|R_t|\leq\mathcal{I}(R_t,\mathcal{L})\ll {|A|}^{\frac{3}{4}}{|B|}^{\frac{1}{2}}{|R_t|}^{\frac{3}{4}}+|R_t|\\
&\Rightarrow|R_t|\ll\frac{{|A|}^3{|B|}^2}{t^4}+\frac{|R_t|}{t}\ll\frac{{|A|}^3{|B|}^2}{t^4}
\end{split}
\end{equation*}
since $|R_t|\leq {|A|}^2{|B|}^2$. This completes the proof of the claim. 
\item Assume ${|R_t|}^3\leq |A|{|B|}^2$. Then, via this assumption,
\[|R_t|\leq\underbrace{{|A|}^{\frac{1}{3}}{|B|}^{\frac{2}{3}}\times\frac{t^4}{{|A|}^3{|B|}^2}}_{=O(1)} \times\frac{{|A|}^3{|B|}^2}{t^4}\ll\frac{{|A|}^3{|B|}^2}{t^4},\]
which also completes this proof of the claim.
\end{enumerate}
\par 
Let $\Delta$ be a parameter to be determined later. Then, by equation \eqref{eq18},
\begin{equation*}
\begin{split}
\sum_{x\in X}E^+(A,xB)&=\sum_{x\in X}\sum_{y\in\mathbb{F}_p}r_{A+xB}^2(y)\\
&=\sum_{\substack{(x,y)\in X\times\mathbb{F}_p\\r_{A+xB}(y)\leq\Delta}}r_{A+xB}^2(y)+\sum_{\substack{(x,y)\in X\times\mathbb{F}_p\\r_{A+xB}(y)>\Delta}}r_{A+xB}^2(y)
\end{split}
\end{equation*}
\begin{enumerate}
\item For the first term, observe that 
\[\sum_{\substack{(x,y)\in\mathbb{F}_p\\r_{A+xB}(y)\leq\Delta}}r_{A+xB}^2(y)\leq\Delta\sum_{x\in X}\sum_{y\in\mathbb{F}_p}r_{A+xB}(y)=\Delta|A||B||X|.\]
\item For the second term, by the above claim, we have 
\begin{equation*}
\begin{split}
\sum_{\substack{(x,y)\in X\times\mathbb{F}_p\\r_{A+xB}(y)>\Delta}}r_{A+xB}^2(y)&=\sum_{j\geq1}\sum_{\substack{(x,y)\in X\times\mathbb{F}_p\\2^{j-1}\Delta<r_{A+xB}(y)\leq 2^j\Delta}}r_{A+xB}^2(y)\\
&\ll\sum_{j\geq1}\left(\frac{{|A|}^3{|B|}^2}{2^{4j-4}\Delta^4}\right)2^{2j}\Delta^2\ll\frac{{|A|}^3{|B|}^2}{\Delta^2}.
\end{split}
\end{equation*}
\end{enumerate}
\par
Let $\Delta\sim {|A|}^{\frac{2}{3}}{|B|}^{\frac{1}{3}}{|X|}^{-\frac{1}{3}}$. Then, we know that 
\[\sum_{x\in X}E^+(A,xB)\ll {|A|}^{\frac{5}{3}}{|B|}^{\frac{4}{3}}{|X|}^{\frac{2}{3}}.\]
\end{proof}
\begin{lemma}\label{lemma3.2.7}

Let $A,B$ be finite subsets of $\mathbb{F}_p$ with $|A|\ll|B|\ll p^{\frac{2}{5}}$. Then, 
\[R(\mathbb{F}_p,A,B)\lesssim {|A|}^{\frac{11}{3}}{|B|}^3.\]
\end{lemma}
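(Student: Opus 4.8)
The plan is to estimate $R(\mathbb{F}_p,A,B)=\sum_{z\in\mathbb{F}_p}r_Q(z)^2$ by a dyadic decomposition over the size of $r_Q(z)$, using Lemma \ref{lemma3.2.6} to control the level sets. Note first that Lemma \ref{Dung} cannot be invoked directly: it requires the index set to have size at most $|A|^2$, whereas here the sum runs over all of $\mathbb{F}_p$. Writing
\[r_Q(z)=\#\{(a,a',b,b')\in A\times A\times B\times B : a+b=z(a'+b'),\ a'+b'\neq 0\},\]
each admissible quadruple determines a unique $z$, so the total mass satisfies $\sum_{z\in\mathbb{F}_p}r_Q(z)\le|A|^2|B|^2$. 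I will combine this with a strong bound on the level sets $Z_t:=\{z:r_Q(z)\ge t\}$.

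The crucial step—and the one I expect to be the main obstacle to find—is to pair the variables so that Lemma \ref{lemma3.2.6} becomes applicable with its favorable hypothesis $|X|\le|A||B|$ rather than the weaker $|X|\le|A|^2$ that the naive grouping forces. Instead of grouping the two copies of $A$ together and the two copies of $B$ together, I rewrite the defining relation $a+b=z(a'+b')$ as $a-zb'=za'-b$. Setting $r_{A-zB}(w):=\#\{(a,b')\in A\times B:a-zb'=w\}$ and $r_{zA-B}(w):=\#\{(a',b)\in A\times B:za'-b=w\}$, this yields the mixed identity $r_Q(z)\le\sum_{w}r_{A-zB}(w)\,r_{zA-B}(w)$. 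Applying Cauchy--Schwarz in $w$ and summing over $z\in Z_t$,
\[\sum_{z\in Z_t}r_Q(z)\le\Big(\sum_{z\in Z_t}E^+(A,zB)\Big)^{1/2}\Big(\sum_{z\in Z_t}E^+(zA,B)\Big)^{1/2},\]
where I used $\sum_w r_{A-zB}(w)^2=E^+(A,zB)$ and $\sum_w r_{zA-B}(w)^2=E^+(zA,B)$. Since $E^+(zA,B)=E^+(A,z^{-1}B)$, both factors have the shape $\sum_{x\in X}E^+(A,xB)$ with $|X|=|Z_t|$, so Lemma \ref{lemma3.2.6} gives, provided $|Z_t|\le|A||B|$,
\[\sum_{z\in Z_t}r_Q(z)\ll|A|^{5/3}|B|^{4/3}|Z_t|^{2/3}.\]
Combining this with $t|Z_t|\le\sum_{z\in Z_t}r_Q(z)$ produces the level-set bound $|Z_t|\ll|A|^5|B|^4 t^{-3}$, valid once $t\gtrsim t_*:=|A|^{4/3}|B|$, a range for which a short bootstrap confirms the hypothesis $|Z_t|\le|A||B|$ needed above.

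It then remains to split the dyadic sum at $t_*$. For the heavy part,
\[\sum_{z:\,r_Q(z)\ge t_*}r_Q(z)^2\approx\sum_{2^j\ge t_*}2^{2j}|Z_{2^j}|\ll|A|^5|B|^4\sum_{2^j\ge t_*}2^{-j}\ll\frac{|A|^5|B|^4}{t_*}=|A|^{11/3}|B|^3,\]
the geometric series being dominated by its smallest term. For the light part I use the total mass,
\[\sum_{z:\,r_Q(z)<t_*}r_Q(z)^2\le t_*\sum_{z}r_Q(z)\le t_*|A|^2|B|^2=|A|^{10/3}|B|^3\le|A|^{11/3}|B|^3.\]
Adding the two and absorbing the $O(\log)$ loss from the dyadic pigeonholing into $\lesssim$ yields $R(\mathbb{F}_p,A,B)\lesssim|A|^{11/3}|B|^3$.

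The main obstacle is the mixed pairing. The symmetric grouping only licenses Lemma \ref{lemma3.2.6} for $|Z|\le|A|^2$, which forces a larger threshold and makes the light/moderate range overshoot the target whenever $|B|\gg|A|^{5/3}$; re-expressing $r_Q(z)$ through $E^+(A,zB)$ and $E^+(zA,B)$ is precisely what enlarges the admissible range to $|Z|\le|A||B|$ and, via the smaller threshold $t_*=|A|^{4/3}|B|$, brings both contributions under the desired bound for all admissible $A,B$. A minor technical point I would handle separately is the degenerate contribution (the term $z=0$ and quadruples with $a'+b'=0$), which is of strictly lower order and easily absorbed.
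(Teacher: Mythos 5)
Your proof is correct and takes essentially the same route as the paper's: the identical mixed pairing (rewriting $a+b=z(a'+b')$ as $a-zb'=za'-b$ so that $r_Q(z)$ is controlled by the energies $E^+(A,zB)$ and $E^+(zA,B)$), the same application of Lemma \ref{lemma3.2.6} to the level sets $Z_t$ with the same threshold $t_*=|A|^{4/3}|B|$, and the same level-set summation. The only cosmetic differences are that the paper splits the product of energies by AM--GM rather than Cauchy--Schwarz, and it treats the light range via a three-way case split on $|Z_t|$ (using the separate bound $|Z_t|\ll|A|^3|B|^3/t^2$ when $|Z_t|\geq|A||B|$) where you use the cleaner total-mass estimate $t_*\sum_z r_Q(z)$.
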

\begin{proof}
Let $Q:=(A+B)/(A+B)$ and $Z_t:=\{z\in\mathbb{F}_p\mid r_Q(z)\geq t\}$.
Note that for any $t\geq1$,
\[t|Z_t|\leq\sum_{z\in Z_t}r_Q(z)\leq\sum_{z\in Q}r_Q(z)={|A|}^2{|B|}^2.\]
\begin{enumerate}
\item Assume $|Z_t|\geq|A||B|$. Then,
\[|A|B|\geq t\Rightarrow|Z_t|\leq\frac{{|Z_t|}^2}{t}\leq\frac{{|A|}^3{|B|}^3}{t^2}.\]
\item On the other hand, suppose $|Z_t|\leq|A||B|$. Observe that every solution of the equation 
\[z=\frac{a_2+b_2}{a_1+b_1}\]
is a solution to
\[b_2-za_1=zb_1-a_2=y\in\mathbb{F}_p.\]
Thus, 
\begin{equation*}
\begin{split}
r_Q(z)&\leq\sum_{y\in\mathbb{F}_p}r_{B-zA}(y)r_{zB-A}(y)\leq\sum_{y\in\mathbb{F}_p}\frac{r_{B-zA}^2(y)+r_{zB-A}^2(y)}{2}\\
&=\frac{1}{2}\left(E^+(B,zA)+E^+(zB,A)\right).
\end{split}
\end{equation*}
We sum over $z\in Z_t$ and get
\begin{equation*}
\begin{split}
&t|Z_t|\leq\frac{1}{2}\sum_{z\in Z_t}\left(E^+(B,zA)+E^+(zB,A)\right)\ll {|A|}^{\frac{5}{3}}{|B|}^{\frac{4}{3}}{|Z_t|}^{\frac{2}{3}}\\
&\Rightarrow|Z_t|\ll\frac{{|A|}^5{|B|}^4}{t^3}
\end{split}
\end{equation*}
by lemma \ref{lemma3.2.6}.
\end{enumerate}
Therefore, 
\begin{equation*}
\begin{split}
R(\mathbb{F}_p,A,B)&=\sum_{z\in Q}r_Q^2(z)=\sum_{t=1}^{|Q|}t^2\left(|Z_t|-|Z_{t+1}|\right)\sim\sum_{t=1}^{|Q|}t|Z_t|\\
&=\sum_{t:|Z_t|\geq|A||B|}t|Z_t|+\sum_{t:|Z_t|\leq|A||B|}t|Z_t|\\
&\ll\sum_{t:|Z_t|\geq|A||B|}\frac{{|A|}^3{|B|}^3}{t}+\sum_{\substack{t:|Z_t|<|A||B|\\t\leq {|A|}^{\frac{4}{3}}|B|}}t|Z_t|+\sum_{\substack{t:|Z_t|<|A||B|\\t\geq {|A|}^{\frac{4}{3}}|B|}}\frac{{|A|}^5{|B|}^4}{t^2}\\
&\lesssim {|A|}^3{|B|}^3+{|A|}^{\frac{11}{3}}{|B|}^3= {|A|}^{\frac{11}{3}}{|B|}^3.
\end{split}
\end{equation*}
\end{proof}
\begin{lemma}\label{lemma3.2.8}
Let $A\subseteq\mathbb{F}_p$ with $|A|\ll p^{\frac{1}{5}}$. Then, there is a subset $A'\subseteq A$ such that 
\[|A'|\gtrsim {E_3^+(A)}^{\frac{1}{2}}{|A|}^{-1}\text{ and }{E_3^+(A)}^4{E^\times(A')}^3\ll {|A|}^{12}{|A'|}^{12}.\]
\end{lemma}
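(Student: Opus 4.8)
The plan is to run a dyadic ``popularity'' argument on the differences of $A$, extract a family of candidate subsets whose size is automatically of the right order, and then invoke the $\mathbb{F}_p$ Szemer\'edi--Trotter machinery (Theorem \ref{Szemeredi-Trotter_in_Fp}, together with the variant-slope energy bound of Lemma \ref{lemma3.2.7}) to pin down a member of the family with small multiplicative energy.

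Write $\sigma(x):=r_{A-A}(x)=|A\cap(A+x)|$, so that $E_3^+(A)=\sum_x\sigma(x)^3$ while $\sum_x\sigma(x)=|A|^2$. First I would dyadically decompose the first sum: there is a scale $\tau$ and a set $P=\{x:\tau\le\sigma(x)<2\tau\}$ of popular differences carrying a $\gg1/\log|A|$ proportion of the energy, so that $|P|\tau^3\gtrsim E_3^+(A)$. Combining this with $|P|\tau\le\sum_x\sigma(x)=|A|^2$ gives $\tau^2\gtrsim E_3^+(A)|A|^{-2}$, i.e.\ $\tau\gtrsim E_3^+(A)^{1/2}|A|^{-1}$. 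For each $x\in P$ set $A_x:=A\cap(A+x)$; then $|A_x|=\sigma(x)\ge\tau\gtrsim E_3^+(A)^{1/2}|A|^{-1}$, so \emph{every} such $A_x$ already meets the size requirement. The whole content is therefore to choose $x_0\in P$ for which $A':=A_{x_0}$ additionally has small multiplicative energy, which I would obtain by bounding the average of $E^\times(A_x)$ over $x\in P$ and selecting $x_0$ at most this average.

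The heart of the matter is the upper bound on $\frac{1}{|P|}\sum_{x\in P}E^\times(A_x)$. Here I would exploit that each $A_x$ is an intersection $A\cap(A+x)$: every element of $A_x$ lies both in $A$ and in the translate $A+x$, so each multiplicative coincidence $ab=cd$ inside $A_x$ is witnessed by extra additive data in $A$ (namely $a-x,b-x,c-x,d-x\in A$). Summing over the popular differences assembles these witnesses, and the natural way to count them is to convert each multiplicative relation into a coincidence of \emph{ratios of sums}, that is, into the variant-slope energy $R(\mathbb{F}_p,\cdot,\cdot)$. One then applies Lemma \ref{lemma3.2.7} (equivalently, the incidence bound of Theorem \ref{Szemeredi-Trotter_in_Fp} through Lemma \ref{lemma3.2.5}), followed by a further Cauchy--Schwarz step and dyadic pigeonholing exactly in the spirit of the proofs of Lemmas \ref{lemma3.2.6} and \ref{lemma3.2.7}, to bound this count. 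Feeding the resulting average bound back through the two relations $|P|\tau^3\gtrsim E_3^+(A)$ and $|A'|\sim\tau$, and then cubing to clear the fractional exponents, should yield $E_3^+(A)^4E^\times(A')^3\ll|A|^{12}|A'|^{12}$.

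Throughout, I would check that $|A|\ll p^{1/5}$ is precisely what keeps every configuration inside the admissible range of Theorem \ref{Szemeredi-Trotter_in_Fp}: the point sets and line families are built from $A$, the $A_x$, their sumsets and quotients, whose sizes are at most $|A|^2\ll p^{2/5}$, so the side conditions $|A|\,|\mathcal{P}|\ll p^2$ and $|A|\,|B|^2\le|\mathcal{L}|^3$ can be arranged. I expect the main obstacle to be exactly this bridging step: setting up the correct point--line (or variant-slope) configuration so that the multiplicative energies of the additively structured sets $A_x$ are genuinely dominated by an incidence count, and then executing the exponent bookkeeping so that the powers $4$ and $3$ and the clean right-hand side $|A|^{12}|A'|^{12}$ emerge. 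A secondary difficulty is verifying the non-degeneracy and cardinality hypotheses of Theorem \ref{Szemeredi-Trotter_in_Fp} at each of its applications.
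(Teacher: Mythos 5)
Your opening moves are sound and agree with the paper's first step: the dyadic pigeonholing producing $P$ and $\tau$ with $|P|\tau^3\gtrsim E_3^+(A)$, and the deduction $\tau\gtrsim E_3^+(A)^{1/2}|A|^{-1}$ from $|P|\tau\le|A|^2$, are both correct, so every candidate set of size $\approx\tau$ automatically meets the size requirement. The gap is in the energy step, and it is fatal to your specific choice of candidates $A_x=A\cap(A+x)$. When you convert $\sum_{x\in P}E^\times(A_x)$ into a variant-slope count, each multiplicative quadruple $ab=cd$ in $A_x$ produces exactly \emph{one} configuration, namely
\[\frac{(a-x)+x}{(c-x)+x}=\frac{(d-x)+x}{(b-x)+x},\]
a solution of the $R$-equation in which all four entries from $P$ are equal to the same $x$. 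The embedding into $R(\mathbb{F}_p,A,P)$ is injective but carries no multiplicity, so after averaging and pigeonholing the best you get (in the case $|P|\le|A|$, via Lemma \ref{lemma3.2.7}) is $E^\times(A_{x_0})\lesssim R(\mathbb{F}_p,A,P)/|P|\lesssim|P|^{8/3}|A|^3$. Feeding this through $E_3^+(A)\approx|P|\tau^3$ and $|A'|\approx\tau$ gives $E_3^+(A)^4E^\times(A')^3\lesssim|P|^{12}\tau^{12}|A|^9$, which is $\ll|A|^{12}|A'|^{12}$ only when $|P|\ll|A|^{1/4}$. That is false in general: for an arithmetic progression the dominant scale has $t\approx|P|\approx|A|$, and your bound overshoots the target by roughly $|A|^{9}$.

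The missing idea is the mechanism by which the paper makes each multiplicative quadruple count with \emph{high multiplicity} rather than multiplicity one. The paper does not take intersections $A\cap(A+x)$; instead it runs two further dyadic pigeonholings, producing $A_1\subseteq A$ of popular abscissae with $r_{P+A}(a)\approx q_1$ and then $A_2\subseteq A$ with $r_{A_1-P}(b)\approx q_2$, where $|A_2|q_2\approx|A_1|q_1\approx|P|t$. By construction every $b\in A_2$ has $\approx q_2$ representations $b=a-s$ with $a\in A_1$ and $s\in P$ varying \emph{independently}, so each quadruple $b_1/b_2=b_3/b_4$ in $A_2$ yields $\approx q_2^4$ distinct solutions of the variant-slope equation with ratio restricted to $Z=A_2/A_2$; this gives $E^\times(A_2)\,q_2^4\ll R(Z,A,-P)$, and the amplification factor $q_2^4$ (rather than your factor $|P|$ from averaging) is precisely what produces the clean exponents $|A|^{12}|A_2|^{12}$. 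Note also that the paper needs a case split your sketch cannot reach: when $|P|\le|A|$ it applies Lemma \ref{lemma3.2.7}, but when $|P|>|A|$ it must invoke Lemma \ref{Dung} for the \emph{restricted} energy $R(A_2/A_2,A,-P)$, since the hypothesis $|Z|\le|A_1|^2$ of that lemma is exactly what the restriction to $Z=A_2/A_2$ supplies; an argument that only ever sees the unrestricted $R(\mathbb{F}_p,\cdot,\cdot)$ has no route through that case.
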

\begin{proof}
Via a regular dyadic pigeonhole argument, there is a number $t\leq|A|$ and a set of popular differences
\[P:=\{x\in A-A\mid t\leq r_{A-A}(x)<2t\}\text{ such that }E_3^+(A)\approx|P|t^3.\]
Again, by applying dyadic argument, there is a $q_1\leq|A|$ and a set of popular abscissae
\[A_1=\{a\in A\mid q_1\leq r_{P+A}(a)<2q_1\}\]
such that 
\[|A_1|q_1\approx\sum_{a\in A_1}r_{P+A}(a)\approx\sum_{x\in P}r_{A-A}(x)\approx|P|t.\]
Using dyadic argument once again, there is a $q_2\leq|A_1|$ and a set of popular coordinates
\[A_2=\{b\in A\mid q_2\leq r_{A_1-P}(b)<2q_2\}\]
such that 
\[|A_2|q_2\approx\sum_{b\in A_2}r_{A_1-P}(b)\approx\sum_{a\in A_1}r_{P+A}(a)\approx|A_1|q_1.\]
Since $q_2\lesssim|A_1|$, either $q_1\lesssim|A_1|$ or $q_2\lesssim|A_1|\lesssim q_1\lesssim|A_2|$. Assume 
\[q_2\lesssim|A_1|\lesssim q_1\lesssim|A_2|\]
For the other case, the proof is similar. By construction, for any $b\in A_2$, $|A\cap(P+b)|\approx q_2$. Now there are $E^\times(A_2)$ quadruples $(b_1,b_2,b_3,b_4)\in {A_2}^4$ such that 
\[\frac{b_1}{b_2}=\frac{b_3}{b_4}.\]
Thus, for every such tuple $(b_1,b_2,b_3,b_4)$, there are approximately ${q_2}^4$ choices of $(a_1,a_2,a_3,a_4)$ such that $a_i\in A_1\cap(P+b_i)$ and 
\[\frac{a_1-(a_1-b_1)}{a_2-(a_2-b_2)}=\frac{a_3-(a_3-b_3)}{a_4-(a_4-b_4)}.\]
Denote $s_i=a_i-b_i\in P$. Then,
\begin{equation}\label{eq19}
E^\times(A_2){\left(\frac{|P|t}{|A_2|}\right)}^4\approx E^\times(A_2){q_2}^4\ll N,
\end{equation}
where 
\begin{equation*}
\begin{split}
N&=\left|\left\{(a_1,\dotsc,a_4,s_1,\dotsc,s_4)\in A^4\times P^4\,\middle|\,\frac{a_1-s_1}{a_2-s_2}=\frac{a_3-s_3}{a_4-s_4}\in A_2/A_2\right\}\right|\\
&=R(Z,A,-P)
\end{split}
\end{equation*}
with $Z:=A_2/A_2$ and $-P:=\{-s\mid s\in P\}$. Denote 
\[r(z)=|\{(a_1,a_2,s_1,s_2)\in A^2\times P^2\mid (a_1-s_1)=z(a_2-s_2)\}|\Rightarrow N\leq\sum_{z\in Z}r^2(z).\]
Divide it into two cases:
\begin{enumerate}
\item Assume $|P|\leq|A|$. By lemma \ref{lemma3.2.7} and equation \eqref{eq19}, 
\begin{equation*}
\begin{split}
&N=R(Z,A,-P)\ll {|A|}^3{|P|}^{\frac{11}{3}}\Rightarrow E^\times(A_2){\left(\frac{|P|t}{|A_2|}\right)}^4\ll {|A|}^3{|P|}^{\frac{11}{3}}\\
&\Rightarrow {|P|}^{\frac{1}{3}}t^4\ll\frac{{|A|}^3{|A_2|}^4}{E^\times(A_2)}.
\end{split}
\end{equation*}
Thus, 
\begin{equation}\label{eq20}
E_3^+(A)\approx|P|t^3\ll {|P|}^{\frac{3}{4}}{\left(\frac{{|A|}^3{|A_2|}^4}{E^\times(A_2)}\right)}^{\frac{3}{4}}\ll\frac{{|A|}^3{|A_2|}^3}{{E^\times(A_2)}^{\frac{3}{4}}}.
\end{equation}
\item On the other hand, for $|P|>|A|$, it satisfies the assumption of Lemma \ref{Dung}. Thus, by Lemma \ref{Dung} and Equation \eqref{eq19},
\begin{equation*}
\begin{split}
&N=R(Z,A,-P)\lesssim {|A|}^{\frac{33}{8}}{|A_2|}^{\frac{5}{2}}\Rightarrow E^\times(A_2){\left(\frac{|P|t}{|A_2|}\right)}^4\lesssim {|A|}^{\frac{33}{8}}{|P|}^{\frac{5}{2}}\\
&\Rightarrow {|P|}^{\frac{3}{2}}t^4\lesssim\frac{{|A|}^{\frac{33}{8}}{|A_2|}^4}{E^\times(A_2)}.
\end{split}
\end{equation*}
Thus, 
\begin{equation}\label{eq21}
E_3^+(A)\approx|P|t^3\lesssim {|P|}^{-\frac{1}{8}}{\left(\frac{{|A|}^{\frac{33}{8}}{|A_2|}^4}{E^\times(A_2)}\right)}^{\frac{3}{4}}\ll\frac{{|A|}^{\frac{95}{32}}{|A_2|}^3}{{E^\times(A_2)}^{\frac{3}{4}}}
\end{equation}
\end{enumerate}
Combining equations \eqref{eq20} and \eqref{eq21}, we get
\[E_3^+(A)\lesssim\frac{{|A|}^3{|A_2|}^3}{{E^\times(A_2)}^{\frac{3}{4}}}\Rightarrow {E_3^+(A)}^4{E^\times(A_2)}^3\lesssim {|A|}^{12}{|A_2|}^{12}.\]
Additionally, since $t\leq|A|$, 
\[{|A_2|}^2\gtrsim|A_2|q_2\approx|P|t\approx\frac{E_3^+(A)}{t^2}\geq\frac{E_3^+(A)}{{|A|}^2}.\]
Set $A'=A_2$ and we get the conclusion.
\end{proof}
\begin{proof}[Proof of Proposition \ref{third_additive_energy_estimate}]
Similar to the proof of lemma \ref{lemma3.2.8}, there is a number $t\leq|A|$ and a set of popular differences
\[P:=\{x\in A-B\mid t\leq r_{A-B}(x)<2t\}\text{ such that }E_3^+(A,B)\approx|P|t^3.\]
By dyadic decomposition again, there is a number $q\leq|A|$ and a set of popular abscissae 
\[A_1:=\{a\in A\mid q\leq r_{P+B}(a)<2q\}\]
such that
\[|A_1|q\approx\sum_{a\in P+B}r_{P+B}(a)\approx\sum_{x\in P}r_{A-B}(x)\approx|P|t.\]
Divide it into two cases:
\begin{enumerate}
\item If $|P|\leq|B|$, then 
\begin{equation}\label{eq22}
\begin{split}
&\frac{{|A_1|}^4}{|A_1\cdot A_1|}\times\frac{{|P|}^4t^4}{{|A_1|}^4}\lesssim E^\times(A_1)q^4\approx R(\mathbb{F}_p,P,B)\lesssim {|P|}^{\frac{11}{3}}{|B|}^3\\
&\Rightarrow {|P|}^{\frac{1}{3}}t^4\lesssim|A_1\cdot A_1|{|B|}^3.\\
&\Rightarrow E_3^+(A,B)\approx {|P|}^{\frac{3}{4}}{\left({|P|}^{\frac{1}{3}}t^4\right)}^{\frac{3}{4}}\lesssim {|P|}^{\frac{3}{4}}{|A_1\cdot A_1|}^{\frac{3}{4}}{|B|}^{\frac{9}{4}}\lesssim {|A_1\cdot A_1|}^{\frac{3}{4}}{|B|}^3.
\end{split}
\end{equation}
\item If $|P|>|B|$, then since $|A_1/A_1|\lesssim {|B|}^2$, we can apply Lemma \ref{Dung} to get
\[\frac{{|A_1|}^4}{|A_1\cdot A_1|}\times\frac{{|P|}^4t^4}{{|A_1|}^4}\lesssim E^\times(A_1)q^4\approx R(\mathbb{F}_p,P,B)\lesssim {|B|}^{\frac{33}{8}}{|P|}^{\frac{5}{2}}.\]
That is, 
\begin{equation}\label{eq23}
\begin{split}
&{|P|}^{\frac{3}{2}}t^4\lesssim|A\cdot A|{|B|}^{\frac{33}{8}}\Rightarrow {\left(|P|t^3\right)}^{\frac{4}{3}}\lesssim {|P|}^{-\frac{1}{6}}|A\cdot A|{|B|}^{\frac{33}{8}}\\
&\Rightarrow {E_3^+(A,B)}^{\frac{4}{3}}\approx {\left(|P|t^3\right)}^{\frac{4}{3}}\lesssim|A\cdot A|{|B|}^{\frac{95}{24}}\\
&\Rightarrow E_3^+(A,B)\lesssim {|A\cdot A|}^{\frac{3}{4}}{|B|}^{\frac{95}{32}}.
\end{split}
\end{equation}
\end{enumerate}
Combining equations \eqref{eq22} and \eqref{eq23}, we get
\[E_3^+(A,B)\lesssim {|A\cdot A|}^{\frac{3}{4}}{|B|}^3\Rightarrow {E_3^+(A,B)}^{\frac{4}{3}}{|B|}^{-4}\lesssim|A\cdot A|,\]
which completes this proof.
\end{proof}
Finally, using the above lemmas, we obtain a new sum-product estimate in $\mathbb{F}_p$.
\begin{theorem}\label{my_first_result}
Let $A\subseteq\mathbb{F}_p$ with $|A|\ll p^{\frac{2}{5}}$. Then, 
\[{|A+A|}^8{|A\cdot A|}^3\gtrsim {|A|}^{12}.\]
Especially, we have
\[\max\{|A+A|,|A\cdot A|\}\gtrsim {|A|}^{1+\frac{1}{11}}.\]
\end{theorem}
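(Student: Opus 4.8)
The plan is to derive the stated inequality directly from Proposition \ref{third_additive_energy_estimate} by pairing it with an elementary H\"{o}lder lower bound for the third-order energy. Given the Proposition, the theorem is essentially a two-line deduction, so almost all of the difficulty has already been absorbed into the energy estimate and its supporting lemmas; at this level the only point demanding genuine care is a sum-versus-difference subtlety in the definition of $E_3^+$.

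First I would produce a lower bound for the third-order energy in terms of the sumset. Since every ordered pair in $A\times A$ contributes to exactly one value of $r_{A+A}$, we have $\sum_{z\in A+A}r_{A+A}(z)=|A|^2$, while the support of $r_{A+A}$ has size $|A+A|$. Applying H\"{o}lder's inequality with exponents $3$ and $\tfrac{3}{2}$ gives
\[
|A|^2=\sum_{z\in A+A}r_{A+A}(z)\leq\left(\sum_{z}r_{A+A}(z)^3\right)^{\frac{1}{3}}|A+A|^{\frac{2}{3}},
\]
so that $|A|^6\leq\left(\sum_z r_{A+A}(z)^3\right)|A+A|^2$. The delicate point is that $\sum_z r_{A+A}(z)^3$ is a \emph{sum}-based third energy, whereas $E_3^+$ in the excerpt is defined through differences; unlike the second-order case, these two quantities need not coincide. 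If one naively sets $B=A$ in Proposition \ref{third_additive_energy_estimate}, the H\"{o}lder step above instead forces $|A-A|$ into the denominator and one ends up bounding $\max\{|A-A|,|A\cdot A|\}$. To obtain the stated $A+A$ version cleanly I would instead take $B=-A$, observing that $r_{A+A}(z)=r_{A-(-A)}(z)$ and hence $\sum_z r_{A+A}(z)^3=E_3^+(A,-A)$.

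With this choice the hypotheses of Proposition \ref{third_additive_energy_estimate} are satisfied, since $|{-A}|=|A|\ll p^{2/5}$ and the conclusion concerns the product set $A\cdot A$ of the first argument, which is unaffected by replacing $B$ by $-A$. The Proposition therefore yields
\[
E_3^+(A,-A)^{\frac{4}{3}}|A|^{-4}\lesssim|A\cdot A|,\qquad\text{equivalently}\qquad E_3^+(A,-A)\lesssim|A\cdot A|^{\frac{3}{4}}|A|^3.
\]
Substituting this into $|A|^6\leq E_3^+(A,-A)\,|A+A|^2$ gives $|A|^6\lesssim|A\cdot A|^{\frac{3}{4}}|A|^3|A+A|^2$, hence $|A|^3\lesssim|A\cdot A|^{\frac{3}{4}}|A+A|^2$, and raising to the fourth power produces exactly ${|A+A|}^8{|A\cdot A|}^3\gtrsim{|A|}^{12}$.

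Finally, the ``Especially'' clause follows at once: writing $M=\max\{|A+A|,|A\cdot A|\}$, we have $M^{11}\geq{|A+A|}^8{|A\cdot A|}^3\gtrsim|A|^{12}$, whence $M\gtrsim|A|^{\frac{12}{11}}=|A|^{1+\frac{1}{11}}$. I do not expect a substantive obstacle in this final assembly once Proposition \ref{third_additive_energy_estimate} is in hand; the one place to be careful is the sum/difference distinction for the third energy, which the $B=-A$ substitution resolves without any loss in the exponents.
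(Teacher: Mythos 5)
Your proof is correct and follows essentially the same route as the paper: a H\"{o}lder lower bound $|A|^6\leq|A+A|^2\sum_z r_{A+A}(z)^3$ combined with Proposition \ref{third_additive_energy_estimate}, then raising to the fourth power. In fact you treat one point more carefully than the paper itself: the paper simply writes $E_3^+(A)$ for $\sum_z r_{A+A}(z)^3$, silently conflating the sum-based third moment with the difference-based quantity $\sum_z r_{A-A}(z)^3$ that its own definition of $E_3^+$ (and the proof of Proposition \ref{third_additive_energy_estimate}, which works with popular differences in $A-B$) actually requires, whereas your substitution $B=-A$ --- for which $E_3^+(A,-A)=\sum_z r_{A+A}(z)^3$, the hypotheses $|A|\lesssim|B|\ll p^{2/5}$ hold trivially, and the conclusion still involves $|A\cdot A|$ --- makes the appeal to the Proposition literally valid with no loss in the exponents.
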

\begin{proof}
By H\"{o}lder inequality,
\[{|A|}^2=\sum_{x\in A+A}r_{A+A}(x)\leq {|A+A|}^{\frac{2}{3}}{\left(\sum_{x\in A+A}r_{A+A}^3(x)\right)}^{\frac{1}{3}}\Rightarrow |A+A|\geq\frac{{|A|}^3}{{E_3^+(A)}^{\frac{1}{2}}}.\]
Together with Proposition \ref{third_additive_energy_estimate}, we have
\begin{equation*}
\begin{split}
&\left\{\begin{array}{l}
{E_3^+(A)}^{\frac{4}{3}}{|A|}^{-4}\lesssim|A\cdot A|\\
{E_3^+(A)}^{-\frac{1}{2}}{|A|}^3\leq|A+A|
\end{array}\right.\Rightarrow\left\{\begin{array}{l}
{E_3^+(A)}^4{|A|}^{-12}\lesssim {|A\cdot A|}^3\\
{E_3^+(A)}^{-4}{|A|}^{24}\leq {|A+A|}^8
\end{array}\right.\\
&\Rightarrow {|A|}^{12}\lesssim {|A+A|}^8{|A\cdot A|}^3,
\end{split}
\end{equation*}
which completes the proof.
\end{proof}
\begin{remark}
Although we can see that if we compare Theorem \ref{my_first_result} and Theorem \ref{Stevens}, the latter is stronger in general, the proving technique of Theorem \ref{my_first_result} is totally different from Elekes' method, so this provides a new perspective to Erd\"{o}s-Szemer\'{e}di Conjecture in $\mathbb{F}_p$.
\end{remark}
\subsection{A better estimate of additive energy in \texorpdfstring{$\mathbb{F}_p$}{}}
In this part, we use ``the point-plane incidence" bound, proved in \cite{Rudnev2018number}, to give another estimate. This is also a piece of evidence to show the strong connection between Incidence Geometry and sum-product estimate.
\begin{theorem}[Theorem 3, \cite{Rudnev2018number}]\label{theorem3.3.1}
Let $\mathcal{P},\Pi$ be sets of points and planes, of cardinalities respectively $m$ and $n$ in $\mathbb{F}_p$. Suppose that $m\geq n$ and $n=O(p^2)$. Let $k$ be the maximum number of collinear planes. Then, 
\[|\mathcal{I}(\mathcal{P},\Pi)|=O(m\sqrt{n}+km).\]
\end{theorem}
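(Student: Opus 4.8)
The plan is to pass from point–plane incidences in $\mathbb{F}_p^3$ to an incidence problem about \emph{lines} in projective $3$-space, where a finite-field analogue of the Guth–Katz line bound is available. First I would normalize the configuration projectively: embed the affine space in $PG(3,p)$, discard the (few) points and planes at infinity, and record each plane by its dual coordinates, so that a point–plane incidence becomes the vanishing of a linear form $v_0 + v_1 x + v_2 y + v_3 z = 0$. This relation is symmetric in points and planes, which is consistent with, and lets me freely use, the hypothesis $m \ge n$.

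The core reduction is through the Klein correspondence. I would encode the pencils of planes through a common line as points of the Klein quadric $\mathcal{K} \subseteq PG(5,p)$, under which lines of $PG(3,p)$ become points of $\mathcal{K}$ and the relation ``two lines meet'' becomes ``their joining line lies on $\mathcal{K}$'', i.e. the two Plücker points are conjugate under the alternating bilinear form. The parameter $k$, the maximum number of collinear planes, is exactly the degeneracy quantity that controls how many of the resulting lines can be concurrent or coplanar; tracking it carefully is what will later separate the main term from the degenerate term.

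The decisive ingredient is an incidence bound for points and lines in $\mathbb{F}_p^3$: a family of $N$ lines with $N = O(p^2)$ and with at most $O(\sqrt N)$ lines in any common plane determines only $O(N^{3/2})$ points lying on two or more of them. Over $\mathbb{R}$ this is the Guth–Katz theorem; the point is that the portion of its argument actually needed — that lines with many pairwise intersections must cluster on a low-degree ruled surface — is purely algebraic and persists over $\mathbb{F}_p$ provided $N \le p^2$, which is precisely the role of the hypothesis $n = O(p^2)$. I would apply this to the line family produced by the Klein correspondence, interpret the count of $2$-rich points as a second-moment count of incidences, and split the total: the ``generic'' part yields $O(m\sqrt n)$, while the planes sharing a common line (at most $k$ per line) contribute $O(km)$, assembling to $\mathcal{I}(\mathcal{P},\Pi) = O(m\sqrt n + km)$.

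The main obstacle is exactly the transfer of the Guth–Katz line bound to the finite field together with the verification that $N = O(p^2)$ is the correct degree threshold: over $\mathbb{R}$ one relies on polynomial partitioning and real-topological facts (an irreducible surface meeting a line either contains it or meets it in few points; ruledness detected by the flecnode polynomial) that must be replaced by their purely algebraic counterparts, and the concurrent/coplanar lines must be quarantined into the $km$ term rather than allowed to inflate the main term. A secondary, more routine difficulty is the Cauchy–Schwarz and dyadic bookkeeping needed to convert the bound on rich points back into a clean bound on the raw incidence count $\mathcal{I}(\mathcal{P},\Pi)$.
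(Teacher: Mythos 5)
There is no internal proof to compare you against: the paper does not prove Theorem~\ref{theorem3.3.1} at all, but imports it verbatim as Theorem~3 of \cite{Rudnev2018number} and uses it as a black box (it is the engine behind the proof of Theorem~\ref{Szemeredi-Trotter_in_Fp} in Appendix~B and behind Theorem~\ref{theorem3.3.2}). So your sketch must stand on its own. Its architecture does coincide with Rudnev's published proof: point--plane flags in projective $3$-space are converted, via pencils and the Pl\"ucker/Klein correspondence, into pairs of intersecting lines drawn from two families, and a Guth--Katz-type bound on pairwise line intersections, valid in positive characteristic under a constraint of the shape $N=O(p^2)$, finishes the count, with the collinear-plane degeneracy isolated in the $km$ term.

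The genuine gap is that the decisive ingredient is asserted rather than proved, and it is essentially the whole theorem. Saying that the Guth--Katz argument ``is purely algebraic and persists over $\mathbb{F}_p$ provided $N\leq p^2$'' is not a transfer one can wave through: the real-variable proof rests on polynomial partitioning, which has no analogue over $\mathbb{F}_p$, and the characteristic-$p$ substitute (lines with many mutual intersections clustering on low-degree ruled surfaces, controlled by flecnode-polynomial arguments) is a nontrivial theorem of Koll\'ar building on Voloch's analysis of flecnodes in positive characteristic --- this is exactly where the threshold $p^2$ originates. Either you cite that theorem in its precise form, in which case your argument is a legitimate reduction but not a proof, or you owe its proof. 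Two further steps also need repair. First, you invoke a bound on the number of $2$-rich points, but the reduction requires counting \emph{pairs} of intersecting lines (each flag produces one pair); a point where many lines concur contributes quadratically many pairs, so a rich-point count alone does not suffice without a multiplicity (dyadic) argument, and the concurrent or coplanar line families arising from up to $k$ collinear planes must be permitted to exceed the $O(\sqrt{N})$ degeneracy threshold and be routed explicitly into the $km$ term. Second, the symmetric bound $O(N^{3/2})$ applied to $N=m+n$ lines yields only $O(m^{3/2})$ when $m\gg n$, which is weaker than the claimed $O(m\sqrt{n})$; the asymmetric form needs an additional device --- for instance, partitioning the $m$ points into $\lceil m/n\rceil$ blocks of size at most $n$ and applying the symmetric bound to each block against all $n$ planes --- which your sketch does not supply.
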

Using this bound, we prove the following theorem.
\begin{theorem}\label{theorem3.3.2}
Let $A\subseteq\mathbb{F}_p$ with $|A|\ll p^{\frac{1}{2}}$. Then, 
\[{|A+A|}^2{|A\cdot A|}^3\gtrsim {|A|}^6.\]
Especially, we have
\[\max\{|A+A|,|A\cdot A|\}\gtrsim {|A|}^{1+\frac{1}{5}}.\]
\end{theorem}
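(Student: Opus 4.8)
The plan is to use the point-plane incidence bound of Theorem~\ref{theorem3.3.1} to control the multiplicative energy $E^\times(A)$, and then convert this into the stated sum-product estimate via the Cauchy--Schwarz relation in Equation~\eqref{eq2}. The starting point is to count solutions to the multiplicative-energy equation by rewriting it as an incidence problem. Recall that $E^\times(A) = |\{(a,b,c,d)\in A^4 \mid ab = cd\}|$, but to bring in the sum set we want a formulation where points and planes are built out of sums and products of elements of $A$. The standard device is to linearize a multiplicative relation of the form $\frac{a+b}{c+d}$ or $a(b+c)=\dots$ so that each equation becomes an incidence between a point in $\mathbb{F}_p^3$ and a plane determined by three parameters from $A$.

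First I would set up the point set and plane set so that each incidence corresponds to a quadruple contributing to the energy. Concretely, one writes an equation of the shape $x_1 = x_2 x_3 + x_4$ (or a projective rearrangement of a quotient), assigns the coordinates coming from one pair of variables to a point $(x,y,z)\in\mathcal{P}$ and the coefficients coming from another pair to a plane in $\Pi$, so that the point lies on the plane exactly when the energy relation holds. Then $\mathcal{I}(\mathcal{P},\Pi)$ counts precisely the energy (up to normalization), with $|\mathcal{P}|,|\Pi| \approx |A+A|\,|A|$ or a similar product of the relevant set sizes. Applying Theorem~\ref{theorem3.3.1} gives a bound of the form $E^\times(A) \ll m\sqrt{n} + km$, where $m,n$ are these cardinalities and $k$ is the maximum number of collinear planes; one checks the hypotheses $m\ge n$ and $n = O(p^2)$ hold under $|A|\ll p^{1/2}$, and that the collinear-planes term $km$ is dominated by (or comparable to) the main term.

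With the resulting upper bound on $E^\times(A)$ in hand, I would feed it into the multiplicative analogue of Equation~\eqref{eq2}, namely $|A|^4 \le |A\cdot A|\, E^\times(A)$, and also use the relationship tying the incidence count to $|A+A|$. Rearranging the inequality $E^\times(A) \lesssim |A+A|^{?}|A\cdot A|^{?}|A|^{?}$ together with $|A|^4/|A\cdot A| \le E^\times(A)$ should yield, after clearing exponents, the homogeneous bound ${|A+A|}^2{|A\cdot A|}^3 \gtrsim {|A|}^6$. The second assertion $\max\{|A+A|,|A\cdot A|\}\gtrsim {|A|}^{1+\frac15}$ then follows immediately by bounding both factors on the left by the maximum, giving $\max^5 \gtrsim |A|^6$.

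The main obstacle I expect is the initial linearization: choosing coordinates for the points and planes so that (i) the energy equation becomes a genuine point-plane incidence, (ii) the cardinalities $m$ and $n$ come out as the intended products of $|A+A|$, $|A\cdot A|$, and $|A|$, and (iii) the collinearity parameter $k$ is provably small. Controlling $k$ is typically the delicate part, since a large family of collinear planes would force the weaker $km$ term to dominate and destroy the bound; here one must verify that the geometric arrangement arising from a generic element of $A$ does not produce many collinear planes, which usually reduces to an elementary algebraic count (for instance showing collinearity forces a fixed linear relation among the defining parameters, bounding $k$ by $|A|$ or a constant). Once the incidence setup and the bound on $k$ are pinned down, the remaining algebra is routine exponent-chasing.
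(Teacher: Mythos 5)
Your overall strategy --- apply Theorem \ref{theorem3.3.1} to an energy rewritten as a point--plane incidence count, then finish with Cauchy--Schwarz --- is indeed the paper's strategy, but you have dualized it in a direction that does not work, and the step you explicitly defer (``the initial linearization \dots the main obstacle'') is the entire mathematical content. The paper does not bound $E^\times(A)$: it bounds the \emph{additive} energy by the \emph{product} set. Using the identity $a_1=(a_1a_2)a_2^{-1}$, it shows $E^+(A)\le|A|^{-2}N_2$, where $N_2$ counts solutions of $u_1v_1+a_1=u_2v_2+a_2$ with $u_i\in A\cdot A$, $v_i\in A^{-1}$, $a_i\in A$; this equation splits into the point set $(A\cdot A)\times A\times A^{-1}$ and the plane set $\{ax+y=bz+c\mid(a,b,c)\in A^{-1}\times(A\cdot A)\times A\}$, which are balanced of size $|A\cdot A|{|A|}^2\ll p^2$ (this is exactly where $|A|\ll p^{1/2}$ enters) and satisfy $k\le|A|$, since containing a fixed line forces $(b,c)$ to be functions of $a$. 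Hence $E^+(A)\ll|A|{|A\cdot A|}^{3/2}$, and ${|A|}^4/|A+A|\le E^+(A)$ gives the theorem. If you instead try to bound $E^\times(A)$ by the sumset, the natural linearization is $a_i=s_i-c_i$ with $s_i\in A+A$, $c_i\in A$, turning $a_1b_1=a_2b_2$ into $(s_1-c_1)b_1=(s_2-c_2)b_2$; because each side is a product of affine forms, every way of splitting the six variables into a point triple and a plane triple produces a degenerate plane family: either all planes contain a common line through the origin (so $k$ equals the number of planes), or a line such as $\{(x_0,t,t)\mid t\in\mathbb{F}_p\}$ with $x_0\in A$ lies in roughly ${|A|}^2$ of the planes. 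In all cases the term $km$ in Theorem \ref{theorem3.3.1} dominates $m\sqrt{n}$ and the resulting estimate is vacuous; moreover the point and plane sets come out unbalanced (${|A+A|}^2|A|$ versus ${|A|}^3$), which loses even when $k$ is small. So your hope that ``collinearity forces a fixed linear relation \dots bounding $k$ by $|A|$ or a constant'' is precisely what fails on your side of the duality.

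Second, even granting an optimal dual energy bound of the shape $E^\times(A)\ll{|A+A|}^{3/2}|A|$ (such results exist in the literature, but require a genuinely different and more delicate configuration), your exponent bookkeeping cannot produce the stated display. Your Cauchy--Schwarz anchor ${|A|}^4\le|A\cdot A|\,E^\times(A)$ injects the product set with exponent $1$, so you would obtain ${|A|}^3\ll|A\cdot A|{|A+A|}^{3/2}$, i.e.\ ${|A|}^6\ll{|A+A|}^3{|A\cdot A|}^2$ --- the transpose of the claimed ${|A+A|}^2{|A\cdot A|}^3\gtrsim{|A|}^6$. The corollary $\max\{|A+A|,|A\cdot A|\}\gtrsim{|A|}^{1+\frac{1}{5}}$ would survive, but the first assertion of the theorem would not be proved. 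To place the exponent $3$ on $|A\cdot A|$, you must run Cauchy--Schwarz on the additive side (${|A|}^4\le|A+A|E^+(A)$) and the incidence bound on the multiplicative side ($E^+(A)\ll{|A\cdot A|}^{3/2}|A|$), which is precisely the paper's arrangement.
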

\begin{proof}
Notice that 
\begin{equation}\label{eq24}
\begin{split}
E^+(A)&=\left|\left\{(a_1,a_2,a_3,a_4)\in A^4\,\middle|\,a_1+a_2=a_3+s_4\right\}\right|\\
&={|A|}^{-2}N_1\leq {|A|}^{-2}N_2
\end{split}
\end{equation}
where 
\[N_1:=\left|\left\{(a_1,a_2,a_3,a_4,a_5,a_6)\in A^6\,\middle|\,(a_1a_2){a_2}^{-1}+a_5=(a_3a_4){a_4}^{-1}+a_6\right\}\right|\]
and $N_2$ is defined as 
\[\left|\left\{(u_1,u_2,v_1,v_2,a_1,a_2)\in {(A\cdot A)}^2\times {\left(A^{-1}\right)}^2\times A^2\,\middle|\,u_1v_1+a_1=u_2v_2+a_2\right\}\right|.\]
Consider
\[\mathcal{P}:=\{(u_1,a_1,v_2)\in(A\cdot A)\times A\times A^{-1}\}\]
and
\[\Pi:=\{\pi:ax+y=bz+c\mid a\in A^{-1},b\in A\cdot A,c\in A\}.\] 
Note that $|\mathcal{P}|=|\Pi|=|A\cdot A|{|A|}^2\ll p^2$ and 
\[\mathcal{I}(\mathcal{P},\Pi):=|\{(p,\pi)\in\mathcal{P}\times\Pi\mid p\in\pi\}|=N_2.\] 
\par
For any line
\[l=\{(x,y,z)=(x_0+\alpha t,y_0+\beta t,z_0+\gamma t)\mid t\in\mathbb{F}_p\},\]
define
\[\Pi_l:=\{\pi\in\Pi\mid l\in\pi\}.\]
Then, $k=\max_l|\Pi_l|$. For a plane $\pi:ax+y=bz+c\in\Pi_l$, we have
\[\left\{\begin{array}{l}
ax_0+y_0=bz_0+c\\
a\alpha+\beta=b\gamma
\end{array}\right.\Rightarrow\left\{\begin{array}{l}
b=b(a)\\
c=c(a)
\end{array}\right.\text{ if such }b,c\text{ exist.}\]
That is, $|\Pi_l|\leq|A|\Rightarrow k\leq|A|$. Therefore, by equation \eqref{eq24} and Theorem \ref{theorem3.3.1}, we know that 
\begin{equation*}
\begin{split}
E^+(A)&\leq {|A|}^{-2}N_2={|A|}^{-2}\mathcal{I}(\mathcal{P},\Pi)\\
&\ll {|A|}^{-2}\left({|A\cdot A|}^{\frac{3}{2}}{|A|}^3+|A||A\cdot A|{|A|}^2\right)\ll |A|{|A\cdot A|}^{\frac{3}{2}}.
\end{split}
\end{equation*}
Applying Cauchy-Schwarz inequality, we have
\[\frac{{|A|}^4}{|A+A|}\ll |A|{|A\cdot A|}^{\frac{3}{2}}\Rightarrow {|A|}^6\ll {|A+A|}^2{|A\cdot A|}^3.\]
\end{proof}
\begin{remark}
In this theorem, we can see that the exponential part, $1+\frac{1}{5}$ is as same as Corollary \ref{Stevens}. Therefore, in this sense, we obtained a result as good as Stevens did. However, in Corollary \ref{Stevens2}, we can see that in extremal cases, Corollary \ref{Stevens2} is still better than our result. In spite of this, the idea of adapting point-plane incidence to proofs of sum-product theorems is still valueless.
\end{remark}
\section{Other Incidence Geometry Problems}
In the previous sections, we already realize how powerful an Incidence Geometry result can be in the sum-product estimate. Therefore, this section will focus on several kinds of Incidence Geometry problems whose connection to sum-product estimate is more implicit than previous ones and briefly introduce their relation to sum-product estimate.
\subsection{Collinear lines in grids}
In the proof of Theorem 6.2 in \cite{Bourgain2004sum}, Bourgain, Katz, and Tao observed that if the point-line incidence $\mathcal{I}(\mathcal{P},\mathcal{L})$ is large, then the point set should contain several large grid-like structures. Hence, to improve the upper bound of $\mathcal{I}(\mathcal{P},\mathcal{L})$, a more delicate result about the incidence relation between lines and grids must be needed. Additionally, Xue (2020, \cite{Xue2021asymmetric}) already applied this idea in the real number, and this section will extend his skill to the finite field $\mathbb{F}_p$. To start with, we define an incidence amount as follows.
\begin{definition}[$T^o,T$]
For any three subsets $A,B,C\subseteq\mathbb{F}_p$, define 
\begin{equation*}
\begin{split}
&T^o(A,B,C)\\
&:=\left|\{(u_1,u_2,u_3)\in A^2\times B^2\times C^2\mid \{u_1,u_2,u_3\}\text{ are collinear and distinct}\}\right|
\end{split}
\end{equation*}
and 
\begin{equation*}
\begin{split}
T(A,B,C):=&\text{ the number of tuples }(a_1,a_2,b_1,b_2,c_1,c_2)\in A^2\times B^2\times C^2\\
&\text{ with }(b_1-a_1)(c_2-a_2)=(c_1-a_1)(b_2-a_2).
\end{split}
\end{equation*}
\end{definition}
Now, we want to bound $T(A, B, C)$, but before that, we need the following Lemma \ref{lemma4.1.2} and \ref{lemma4.1.3} to obtain an upper bound for $T^o(A, B, C)$ and use the relation between $T$ and $T^o$ to get the desired upper bound in Proposition \ref{proposition4.1.4}.
\begin{lemma}\label{lemma4.1.2}
Let $A,B$ be subsets of $\mathbb{F}_p$ with $|A|\leq|B|\ll p^{\frac{2}{5}}$ and $\mathcal{P}=A\times B$. Then,
\[|\mathcal{L}|\ll\min\left\{\frac{{|A|}^3{|B|}^2}{k^4},\frac{{|A|}^2{|B|}^2}{k^2}\right\}+\frac{|\mathcal{L}|}{k}\]
with 
\[\mathcal{L}:=\{l:\text{a line in }{\mathbb{F}_p}^2\mid |l\cap(A\times B)|\geq k\}.\]
\end{lemma}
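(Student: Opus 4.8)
The plan is to run the standard \emph{rich-lines} argument: bound the number of $k$-rich lines by playing the trivial double-counting estimate against the Szemer\'edi--Trotter bound of Theorem \ref{Szemeredi-Trotter_in_Fp}, and then retain whichever is stronger depending on the size of $k$. Throughout I abbreviate $\mathcal{P}=A\times B$, and I may assume $k\geq 2$, since for $k=1$ the term $|\mathcal{L}|/k=|\mathcal{L}|$ makes the asserted inequality vacuous.

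First I would record the unconditional bound. Any two distinct points of $\mathcal{P}$ lie on a unique line, while each $l\in\mathcal{L}$ carries at least $\binom{k}{2}\gg k^2$ pairs of points of $\mathcal{P}$. As $\mathcal{P}$ has only $\binom{|A||B|}{2}\ll |A|^2|B|^2$ pairs in total, this yields $|\mathcal{L}|\ll |A|^2|B|^2/k^2$ with no extra hypotheses; in particular $|\mathcal{L}|\ll|A|^2|B|^2$, which I reuse below. Next I would extract the incidence bound exactly as in the internal claim of Lemma \ref{lemma3.2.6}. Each line of $\mathcal{L}$ is $k$-rich, so $k|\mathcal{L}|\leq\mathcal{I}(\mathcal{P},\mathcal{L})$. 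To invoke Theorem \ref{Szemeredi-Trotter_in_Fp} I must verify its hypotheses: $|A|\leq|B|$ is given; the condition $|A||\mathcal{L}|\ll p^2$ follows from $|\mathcal{L}|\ll|A|^2|B|^2$ together with $|A|\leq|B|\ll p^{2/5}$, since then $|A||\mathcal{L}|\ll|A|^3|B|^2\leq|B|^5\ll p^2$; and the remaining hypothesis $|A||B|^2\leq|\mathcal{L}|^3$ I would dispose of by a case split, noting that when it fails one has $|\mathcal{L}|<|A|^{1/3}|B|^{2/3}$, already small enough to meet the claimed bound in the range of $k$ that matters. When the hypothesis holds, Theorem \ref{Szemeredi-Trotter_in_Fp} gives $k|\mathcal{L}|\ll|A|^{3/4}|B|^{1/2}|\mathcal{L}|^{3/4}+|\mathcal{L}|$; dividing by $k$ and solving for $|\mathcal{L}|$ (if the first term dominates then $|\mathcal{L}|^{1/4}\ll|A|^{3/4}|B|^{1/2}/k$ and so $|\mathcal{L}|\ll|A|^3|B|^2/k^4$, while the second term is absorbed into $|\mathcal{L}|/k$) produces $|\mathcal{L}|\ll|A|^3|B|^2/k^4+|\mathcal{L}|/k$.

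Finally I would combine the two estimates. Since $|\mathcal{L}|$ is bounded by $X:=|A|^2|B|^2/k^2$ and also by $Y+|\mathcal{L}|/k$ with $Y:=|A|^3|B|^2/k^4$, it is bounded by $\min\{X,Y\}+|\mathcal{L}|/k$: if $X\leq Y$ then $|\mathcal{L}|\leq X=\min\{X,Y\}$, and if $Y<X$ then $|\mathcal{L}|\leq Y+|\mathcal{L}|/k=\min\{X,Y\}+|\mathcal{L}|/k$. This is exactly the asserted inequality.

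The hard part will be the degenerate, axis-parallel lines. A vertical line $x=a$ with $a\in A$ meets $\mathcal{P}$ in $|B|$ points, so it can be $k$-rich for $k$ as large as $|B|$, far more than the $k^{-4}$ term can absorb directly; these lines therefore cannot simply be fed into the Szemer\'edi--Trotter count and must be handled by hand. The remedy is to isolate them — there are at most $|A|$ vertical and $|B|$ horizontal lines through the grid — and to observe that in the regime $k\leq|A|$, where the Szemer\'edi--Trotter bound is the operative one (for $k\leq|A|^{1/2}$ the minimum is attained by the trivial bound), the count $|A|$ of vertical lines already satisfies $|A|\ll|A|^3|B|^2/k^4$ because $k\leq|A|\leq|A|^{1/2}|B|^{1/2}$. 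Checking that this bookkeeping, coupled with the case split on $|A||B|^2\leq|\mathcal{L}|^3$, closes every range of $k$ is where the argument needs the most care; it is also the point at which one must be precise about whether $\mathcal{L}$ is understood to include vertical lines at all, since for $k>|A|$ only vertical lines survive and they must be compared against the trivial branch of the minimum rather than the $k^{-4}$ branch.
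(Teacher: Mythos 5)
Your proposal is, at bottom, the same argument as the paper's: bound $k|\mathcal{L}|\leq\mathcal{I}(A\times B,\mathcal{L})$, verify $|A||\mathcal{L}|\ll p^2$ from the trivial count $|\mathcal{L}|\ll|A|^2|B|^2$, split on whether $|A||B|^2\leq|\mathcal{L}|^3$, and play Theorem \ref{Szemeredi-Trotter_in_Fp} against a trivial estimate to obtain the two branches of the minimum. The only cosmetic difference is in the $k^{-2}$ branch: you extract it directly from pair counting, while the paper uses the equivalent Cauchy--Schwarz incidence bound $\mathcal{I}(A\times B,\mathcal{L})\ll|A||B||\mathcal{L}|^{1/2}+|\mathcal{L}|$, which is why its second branch also carries an $|\mathcal{L}|/k$ term.

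Two comments on the difficulties you single out. First, the separate bookkeeping for vertical lines inside the incidence step is unnecessary: Theorem \ref{Szemeredi-Trotter_in_Fp}, as stated and as proved in Appendix B, tolerates vertical lines --- their at most $|A||B|$ incidences are absorbed into the main term precisely because of the hypothesis $|A||B|^2\leq|\mathcal{L}|^3$ --- so in the main case you may feed all of $\mathcal{L}$ into it at once. Second, your unease about the range $k>|A|$ is justified, and neither your sketch nor the paper's proof can close it, because the statement as written is false there: take $|A|=n$, $|B|=n^2$, $k=|B|$. Then $\mathcal{L}$ consists exactly of the $n$ vertical lines, so $|\mathcal{L}|=n$, whereas
\[\min\left\{\frac{|A|^3|B|^2}{k^4},\frac{|A|^2|B|^2}{k^2}\right\}+\frac{|\mathcal{L}|}{k}=\min\{n^{-1},n^2\}+n^{-1}=O(1).\]
This configuration satisfies $|\mathcal{L}|^3<|A||B|^2$, i.e.\ it lives precisely in the case the paper dismisses with ``If $|\mathcal{L}|^3<|A||B|^2$, this lemma holds,'' and that your sketch defers with ``in the range of $k$ that matters.'' The lemma is only ever invoked (in Lemma \ref{lemma4.1.3}) with $A=B=A_i$ and $2\leq k\leq|A_i|$; in that regime both arguments do close, since the axis-parallel lines number at most $|A|+|B|\leq2|B|$, which is dominated by both branches of the minimum when $k\leq|A|$, and the dismissed case $|\mathcal{L}|<|A|^{1/3}|B|^{2/3}$ is dominated by the $k^{-4}$ branch as soon as $k\leq|A|^{2/3}|B|^{1/3}$. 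So your proof is as complete as the paper's, and your last paragraph correctly locates the one point where both proofs --- and indeed the statement itself --- need the restriction on $k$ (or the exclusion of axis-parallel lines) made explicit.
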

\begin{proof}
Let $\mathcal{L}$ be the collection of lines $l$ with $|l\cap(A\times B)|\geq k$. Thus, 
\[k|\mathcal{L}|\leq\mathcal{I}(A\times B,\mathcal{L}).\]
If ${|\mathcal{L}|}^3<|A|{|B|}^2$, this lemma holds. i.e. We may assume ${|\mathcal{L}|}^3\geq|A|{|B|}^2$. Additionally, 
\[|A||\mathcal{L}|\ll |A|{|A\times B|}^2\ll p^2,\]
so we can apply theorem \ref{Szemeredi-Trotter_in_Fp}. By theorem \ref{Szemeredi-Trotter_in_Fp}, 
\begin{equation}\label{eq25}
\mathcal{I}(A\times B,\mathcal{L})\ll {|A|}^{\frac{3}{4}}{|B|}^{\frac{1}{2}}{|\mathcal{L}|}^{\frac{3}{4}}+|\mathcal{L}|\Rightarrow|\mathcal{L}|\ll\frac{{|A|}^3{|B|}^2}{k^4}+\frac{|\mathcal{L}|}{k}.
\end{equation}
Moreover, by Cauchy-Schwarz incidence bound, 
\begin{equation}\label{eq26}
k|\mathcal{L}|\leq\mathcal{I}(A\times B,\mathcal{L})\ll |A||B|{|\mathcal{L}|}^{\frac{1}{2}}+|\mathcal{L}|\Rightarrow|\mathcal{L}|\ll\frac{{|A|}^2{|B|}^2}{k^2}+\frac{|\mathcal{L}|}{k}.
\end{equation}
We complete the proof by combining equations \eqref{eq25} and \eqref{eq26}.
\end{proof}
\begin{lemma}\label{lemma4.1.3}
Suppose that $A_1,A_2,A_3\subset\mathbb{F}_p$ with $|A_1|\leq|A_2|\leq|A_3|\ll p^{\frac{2}{5}}$. Let $\mathcal{L}$ be the collection of lines such that $l\in\mathcal{L}$ if and only if $l$ contains three distinct points $u_1,u_2,u_3$ with $u_i\in {A_i}^2$. For each $l\in\mathcal{L}$ and $i=1,2,3$, define $\alpha_{i,l}=|l\cap(A_i\times A_i)|$. For $1\leq k\leq|A_i|$, define $\mathcal{L}_{i,k}:=\{l\in\mathcal{L}\mid\alpha_{i,l}\geq k\}$. Then, we have
\[\sum_{l\in\mathcal{L}_{i,2}}{\alpha_{i,l}}^s\lesssim {|A_i|}^4+{|A_i|}^{\frac{s}{2}+3}+{|A_i|}^{s+1}\text{ for }i=1,2,3\text{ and }s>1.\]
\end{lemma}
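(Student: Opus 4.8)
The plan is to control the level sets $\mathcal{L}_{i,k}$ via Lemma \ref{lemma4.1.2} and then sum the resulting estimates over the scales of $k$. Write $a:=|A_i|$ for brevity, and observe that $\mathcal{L}_{i,k}$ is contained in the set of \emph{all} lines $l$ with $|l\cap(A_i\times A_i)|\geq k$, so Lemma \ref{lemma4.1.2} applies with $A=B=A_i$. Since every relevant $k$ satisfies $k\geq 2$, the term $|\mathcal{L}_{i,k}|/k\leq|\mathcal{L}_{i,k}|/2$ may be absorbed into the left-hand side, yielding the clean bound
\[
|\mathcal{L}_{i,k}|\ll\min\left\{\frac{a^5}{k^4},\frac{a^4}{k^2}\right\}
\]
for all $2\leq k\leq a$.

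First I would rewrite the power sum in terms of these level sets. By Abel summation (equivalently, a dyadic decomposition over the scales $k\sim 2^j$), one has
\[
\sum_{l\in\mathcal{L}_{i,2}}\alpha_{i,l}^s\approx\sum_{k=2}^{a}k^{s-1}|\mathcal{L}_{i,k}|,
\]
so it suffices to estimate the right-hand side. The two bounds in the minimum coincide at $k=\sqrt{a}$: for $k\leq\sqrt{a}$ the Cauchy--Schwarz bound $a^4/k^2$ is the smaller one, while for $k\geq\sqrt{a}$ the Szemer\'{e}di--Trotter bound $a^5/k^4$ wins. I would therefore split the sum at $\sqrt{a}$ and use the sharper estimate in each range.

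For the low range $2\leq k\leq\sqrt{a}$, substituting $|\mathcal{L}_{i,k}|\ll a^4/k^2$ reduces the problem to $a^4\sum_k k^{s-3}$; for the high range $\sqrt{a}\leq k\leq a$, substituting $|\mathcal{L}_{i,k}|\ll a^5/k^4$ reduces it to $a^5\sum_k k^{s-5}$. Each is a truncated power sum whose size is governed by whether its exponent exceeds $-1$: the low-range sum is dominated by its endpoint $k=\sqrt{a}$ when $s>2$ and is $O(1)$ (up to logs at $s=2$) when $1<s\leq 2$, contributing $\lesssim a^4+a^{3+s/2}$; the high-range sum is dominated by $k=a$ when $s>4$ and by $k=\sqrt{a}$ when $s<4$, contributing $\lesssim a^{3+s/2}+a^{s+1}$. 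Adding the two contributions yields the claimed bound $a^4+a^{s/2+3}+a^{s+1}$.

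The routine but delicate part is the case analysis in $s$: since $s>1$ is an unspecified parameter, each geometric-type sum may be increasing or decreasing in $k$, so one must check at the crossover $k=\sqrt{a}$ and at the endpoints which term dominates, and verify that the three stated exponents $4$, $s/2+3$, and $s+1$ together cover every regime (with the $\lesssim$ notation absorbing the logarithmic factors arising at the borderline values $s=2$ and $s=4$). I expect this bookkeeping, rather than any genuinely hard estimate, to be the main point requiring care.
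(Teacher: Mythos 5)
Your proof is correct and takes essentially the same route as the paper's: both pass to level sets via $\sum_{l\in\mathcal{L}_{i,2}}\alpha_{i,l}^s\approx\sum_{2\leq k\leq|A_i|}k^{s-1}|\mathcal{L}_{i,k}|$, bound $|\mathcal{L}_{i,k}|$ through Lemma \ref{lemma4.1.2}, and split the sum at the crossover $k\sim|A_i|^{1/2}$ (the paper merely splits off an extra range $2\leq k\leq\log|A_i|$ because it carries the error term $|\mathcal{L}|/k$ along rather than absorbing it as you do). The only point needing a word is that absorption requires $k$ to exceed (twice) the implied constant in Lemma \ref{lemma4.1.2}, not merely $k\geq2$; for the finitely many remaining values $k=O(1)$ the trivial bound $|\mathcal{L}_{i,k}|\leq|A_i|^4\ll\min\{|A_i|^5/k^4,\,|A_i|^4/k^2\}$ suffices, so your argument is complete.
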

\begin{proof}
Note that 
\begin{equation*}
\begin{split}
\sum_{l\in\mathcal{L}_{i,2}}{\alpha_{i,l}}^s&=\sum_{2\leq k\leq|A_i|}|\{l\in\mathcal{L}_{i,2}\mid\alpha_{i,l}=k\}|k^s\\
&\approx\sum_{2\leq k\leq|A_i|}\left(|\{l\in\mathcal{L}_{i,2}\mid\alpha_{i,l}=k\}|\sum_{2\leq m\leq k}m^{s-1}\right)\\
&=\sum_{2\leq m\leq|A_i|}m^{s-1}\underbrace{\sum_{k\geq m,k\leq|A_i|}|\{l\in\mathcal{L}_{i,2}\mid\alpha_{i,l}=k\}|}_{=|\{l\in\mathcal{L}_{i,2}\mid\alpha_{i,l}\geq m\}|}\\
&\approx\sum_{2\leq m\leq|A_i|}m^{s-1}|\{l\in\mathcal{L}_{i,2}\mid\alpha_{i,l}\geq m\}|=\sum_{2\leq m\leq|A_i|}m^{s-1}|\mathcal{L}_{i,m}|.
\end{split}
\end{equation*}
\par
Divide it into three parts:
\begin{enumerate}
\item For $2\leq m\leq\log|A_i|$, via lemma \ref{lemma4.1.2}, 
\[|\mathcal{L}_{i,m}|\lesssim\frac{{|A_i|}^4}{m^2}+\frac{|\mathcal{L}|}{m}\lesssim m^{-1}{|A_i|}^4.\]
\item Similarly, for $\log|A_i|\leq m\leq {|A_i|}^{\frac{1}{2}}$, we can get
\[|\mathcal{L}_{i,m}|\lesssim m^{-2}{|A_i|}^4.\]
\item As for ${|A_i|}^{\frac{1}{2}}\leq m\leq|A_i|$, we obtain
\[|\mathcal{L}_{i,m}|\lesssim\frac{{|A_i|}^5}{m^4}+\frac{|\mathcal{L}|}{m}\lesssim m^{-4}{|A_i|}^5.\]
\end{enumerate}
To sum up, we have
\begin{equation*}
\begin{split}
\sum_{l\in\mathcal{L}_{i,2}}{\alpha_{i,l}}^s\lesssim&\sum_{2\leq m\leq\log|A_i|}m^{s-2}{|A_i|}^4+\sum_{\log|A_i|\leq m\leq {|A_i|}^{\frac{1}{2}}}m^{s-3}{|A_i|}^4\\
&+\sum_{{|A_i|}^{\frac{1}{2}}\leq m\leq|A_i|}m^{s-5}{|A_i|}^5\\
\lesssim& {|A_i|}^4+\left(1+{|A_i|}^{\frac{s-2}{2}}\right){|A_i|}^4+\left({|A_i|}^{\frac{s-4}{2}}+{|A_i|}^{s-4}\right){|A_i|}^5\\
\lesssim& {|A_i|}^4+{|A_i|}^{\frac{s}{2}+3}+{|A_i|}^{s+1}.
\end{split}
\end{equation*}
\end{proof}
\begin{proposition}\label{proposition4.1.4}
Suppose that $A_1,A_2,A_3$ are three finite subsets of $\mathbb{F}_p$ with $|A_1|\leq|A_2|\leq|A_3|\ll p^{\frac{2}{5}}$. Then, 
\begin{equation*}
\begin{split}
T^o(A_1,A_2,A_3)\lesssim& \min\left\{|A_1|{|A_2|}^{\frac{9}{4}}{|A_3|}^{\frac{5}{4}},{|A_1|}^{\frac{1}{2}}{|A_2|}^{\frac{5}{2}}{|A_3|}^{\frac{5}{4}},{|A_2|}^2{|A_3|}^2\right\}\\
&+{|A_1|}^2{|A_2|}^{\frac{5}{4}}{|A_3|}^{\frac{5}{4}}+{|A_1|}^{\frac{3}{2}}{|A_2|}^{\frac{3}{2}}{|A_3|}^{\frac{5}{4}}
\end{split}
\end{equation*}
and
\[T(A_1,A_2,A_3)\lesssim T^o(A_1,A_2,A_3)+{|A_1|}^2{|A_3|}^2.\]
\end{proposition}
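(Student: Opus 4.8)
The plan is to reduce both quantities to a single line-counting sum and then feed into it the moment estimates of Lemma~\ref{lemma4.1.3}. For the relation between $T$ and $T^o$, I would first observe that the algebraic condition $(b_1-a_1)(c_2-a_2)=(c_1-a_1)(b_2-a_2)$ is exactly the determinant condition expressing that the three points $u_1=(a_1,a_2)$, $u_2=(b_1,b_2)$, $u_3=(c_1,c_2)$ are collinear, and that it is \emph{automatically} satisfied whenever two of the three points coincide. Hence $T(A_1,A_2,A_3)-T^o(A_1,A_2,A_3)$ is precisely the number of tuples whose three points are not all distinct. I would bound this by splitting into the three coincidence patterns $u_1=u_2$, $u_1=u_3$, and $u_2=u_3$: in each pattern the repeated point ranges over the intersection of two of the grids and the remaining point over a full grid, so after using $|A_1|\le|A_2|\le|A_3|$ each pattern contributes at most $|A_1|^2|A_3|^2$ tuples. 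Summing the three gives $T\lesssim T^o+|A_1|^2|A_3|^2$, which is the second assertion.

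For the bound on $T^o$, the first step is the incidence reduction. Since three distinct collinear points determine a unique line, writing $\alpha_{i,l}=|l\cap(A_i\times A_i)|$ we have
\[
T^o(A_1,A_2,A_3)\le\sum_{l\in\mathcal{L}}\alpha_{1,l}\,\alpha_{2,l}\,\alpha_{3,l},
\]
where $\mathcal{L}$ is the line set of Lemma~\ref{lemma4.1.3}. The governing factor $|A_3|^{5/4}$ then comes from treating the largest set with a fourth moment: by Lemma~\ref{lemma4.1.3} with $s=4$, the fourth moment of $\alpha_{3,l}$ over the $A_3$-rich lines satisfies $\sum_{l}\alpha_{3,l}^4\lesssim|A_3|^4+|A_3|^5+|A_3|^5\approx|A_3|^5$, so Hölder's inequality isolating $\alpha_{3,l}$ with exponent $4$ yields a factor $\bigl(\sum_l\alpha_{3,l}^4\bigr)^{1/4}\lesssim|A_3|^{5/4}$ multiplied by a sum in $\alpha_{1,l}$ and $\alpha_{2,l}$ of combined exponent $4/3$.

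The remaining and most delicate step is to estimate the $A_1,A_2$ part $\sum_l(\alpha_{1,l}\alpha_{2,l})^{4/3}$. Here I would separate the lines according to whether they are rich or thin in $A_1$ and in $A_2$ (that is, $\alpha_{i,l}\ge 2$ versus $\alpha_{i,l}=1$), since only the rich lines $\mathcal{L}_{i,2}$ are controlled by the moment bounds of Lemma~\ref{lemma4.1.3}. On the rich part I would combine the trivial bound $\alpha_{1,l}\le|A_1|$ with Hölder applied to $\alpha_{1,l}$ and $\alpha_{2,l}$, and then substitute the three-term estimate $\sum_{l\in\mathcal{L}_{i,2}}\alpha_{i,l}^s\lesssim|A_i|^4+|A_i|^{s/2+3}+|A_i|^{s+1}$. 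The three competing bounds inside the minimum correspond to the three ways of distributing the exponents between $A_1$ and $A_2$, equivalently to which of the three terms of the moment estimate dominates for the given sizes: pushing most of the weight onto $A_2$ produces $|A_1|\,|A_2|^{9/4}|A_3|^{5/4}$ and $|A_1|^{1/2}|A_2|^{5/2}|A_3|^{5/4}$, while using only second moments $\sum_l\alpha_{i,l}^2\lesssim|A_i|^4$ for both $A_2$ and $A_3$ gives the coarse alternative $|A_2|^2|A_3|^2$. The additive error terms $|A_1|^2|A_2|^{5/4}|A_3|^{5/4}$ and $|A_1|^{3/2}|A_2|^{3/2}|A_3|^{5/4}$ collect the contribution of the thin lines together with the subdominant pieces $|A_i|^4$ and $|A_i|^{s/2+3}$ of the moment estimates.

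The main obstacle I anticipate is precisely this last bookkeeping: choosing the Hölder exponents so that each regime yields the advertised term, and verifying that the thin-line contributions are genuinely subsumed by the two error terms rather than spawning new dominant terms. Throughout I would also have to check that the hypotheses of Lemma~\ref{lemma4.1.3}, hence of the underlying Szemer\'edi--Trotter bound in $\mathbb{F}_p$, namely $|A_i|\ll p^{2/5}$ and the transversality condition $|A_i||\mathcal{L}|\ll p^2$, remain satisfied for every sub-collection of lines to which the incidence estimates are applied.
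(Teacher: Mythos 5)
Your handling of the second inequality is complete and correct --- indeed cleaner than the paper's own route, which cases on equal first coordinates rather than directly on coincident points --- and your skeleton for bounding $T^o$ (the incidence reduction $T^o\le\sum_l\alpha_{1,l}\alpha_{2,l}\alpha_{3,l}$, the rich/thin splitting, and extracting $|A_3|^{5/4}$ from the fourth moment $\sum_{l\in\mathcal{L}_{3,2}}\alpha_{3,l}^4\lesssim|A_3|^5$ of Lemma~\ref{lemma4.1.3}) coincides with the paper's.

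However, the step you defer as ``bookkeeping'' is the actual content of the proof, and the one mechanism you commit to would fail. Every occurrence of $|A_1|$ in the claimed bound comes from a structural fact you never invoke: each line of $\mathcal{L}$ joins a point of $A_1\times A_1$ to a \emph{distinct} point of $A_2\times A_2$, hence $|\mathcal{L}|\le{|A_1|}^2{|A_2|}^2$. On the lines with $\alpha_{1,l}=1$ --- exactly where the minimum terms live --- the factor $\alpha_{1,l}$ carries no information, so one must trade the remaining sum against this line count, e.g.
\[
\sum_{l\in\mathcal{L}_{2,2}}\alpha_{2,l}^{4/3}\;\le\;{|\mathcal{L}|}^{\frac{2}{3}}\Bigl(\sum_{l\in\mathcal{L}_{2,2}}\alpha_{2,l}^{4}\Bigr)^{\frac{1}{3}}\;\lesssim\;{|A_1|}^{\frac{4}{3}}{|A_2|}^{3},
\]
which after your peeling step yields the first minimum term ${|A_1|}{|A_2|}^{9/4}{|A_3|}^{5/4}$ (the paper achieves the same thing in its $S_{1,2,2}$ estimate via ${|\mathcal{L}|}^{1/2}\bigl(\sum_l(\alpha_{2,l}\alpha_{3,l})^2\bigr)^{1/2}$). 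Your proposal instead combines ``the trivial bound $\alpha_{1,l}\le|A_1|$'' with Lemma~\ref{lemma4.1.3} at exponent $4/3$; on this piece that gives only $\bigl(\sum_{l\in\mathcal{L}_{2,2}}\alpha_{2,l}^{4/3}\bigr)^{3/4}{|A_3|}^{5/4}\lesssim{|A_2|}^3{|A_3|}^{5/4}$, which is \emph{not} dominated by the claimed right-hand side: for $|A_1|\sim{|A_2|}^{1/2}$ and $|A_2|=|A_3|$ the claimed bound is $\approx{|A_2|}^4$ while ${|A_2|}^3{|A_3|}^{5/4}={|A_2|}^{17/4}$. Similarly, on the $\alpha_{1,l}\ge2$ lines the trivial bound loses against the second moment $\sum_{l\in\mathcal{L}_{1,2}}\alpha_{1,l}^2\lesssim{|A_1|}^4$, which is what actually produces the error term ${|A_1|}^2{|A_2|}^{5/4}{|A_3|}^{5/4}$. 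So the missing idea is precisely the interplay of the line count $|\mathcal{L}|\le{|A_1|}^2{|A_2|}^2$ with the second and fourth moments of Lemma~\ref{lemma4.1.3}, under different H\"{o}lder weightings on each rich/thin piece; as written, your plan does not reach the stated exponents. (One delicate point you did get right: the entry ${|A_2|}^2{|A_3|}^2$ of the minimum cannot be obtained after peeling $\alpha_{3,l}$ at the fourth moment, and must be proved separately using second moments of both $\alpha_{2,l}$ and $\alpha_{3,l}$.)
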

\begin{proof}
It is clear that 
\[T^o(A_1,A_2,A_3)\leq\sum_{l\in\mathcal{L}}\alpha_{1,l}\alpha_{2,l}\alpha_{3,l}.\]
Denote $I_1=\{1\}$ and $I_2=\mathbb{N}\setminus\{1\}$ and for $(j_1,j_2,j_3)\in {\{1,2\}}^3$, define
\[S_{j_1,j_2,j_3}:=\sum_{\substack{l\in\mathcal{L}\\\alpha_{i,l}\in I_{j_i}}}\alpha_{1,l}\alpha_{2,l}\alpha_{3,l}.\]
By Cauchy-Schwarz inequality and Lemma \ref{lemma4.1.3}, we have 
\begin{equation*}
\begin{split}
S_{1,2,2}&\leq\sum_{l\in\left(\mathcal{L}_{2,2}\cap\mathcal{L}_{3,2}\right)\setminus\mathcal{L}_{1,2}}\alpha_{2,l}\alpha_{3,l}\leq {|\mathcal{L}|}^{\frac{1}{2}}{\left(\sum_{l\in\mathcal{L}_{2,2}\cap\mathcal{L}_{3,2}}{\left(\alpha_{2,l}\alpha_{3,l}\right)}^2\right)}^{\frac{1}{2}}\\
&\ll {\left({|A_1|}^2{|A_2|}^2\right)}^{\frac{1}{2}}{\left({\left(\sum_{l\in\mathcal{L}_{2,2}}{\left({\alpha_{2,l}}^2\right)}^2\right)}^{\frac{1}{2}}{\left(\sum_{l\in\mathcal{L}_{3,2}}{\left({\alpha_{3,l}}^2\right)}^2\right)}^{\frac{1}{2}}\right)}^{\frac{1}{2}}\\
&\ll |A_1|{|A_2|}^{\frac{9}{4}}{|A_3|}^{\frac{5}{4}}.
\end{split}
\end{equation*}
Also, by Lemma \ref{lemma4.1.3},
\begin{equation*}
\begin{split}
S_{1,2,2}&\leq\sum_{l\in\mathcal{L}_{2,2}\cap\mathcal{L}_{3,2}}\alpha_{2,l}\alpha_{3,l}\leq {\left(\sum_{l\in\mathcal{L}_{2,2}}{\alpha_{2,l}}^2\right)}^{\frac{1}{2}}{\left(\sum_{l\in\mathcal{L}_{3,2}}{\alpha_{3,l}}^2\right)}^{\frac{1}{2}}\\
&\leq {\left(\sum_{l\in\mathcal{L}_{2,2}}{\alpha_{2,l}}^2\right)}^{\frac{1}{2}}{\left({|\mathcal{L}_{3,2}|}^{\frac{1}{2}}{\left(\sum_{l\in\mathcal{L}_{3,2}}{\left({\alpha_{3,l}}^2\right)}^2\right)}^{\frac{1}{2}}\right)}^{\frac{1}{2}}\\
&\lesssim {|A_2|}^2{\left({|A_1|}^2{|A_2|}^2\right)}^{\frac{1}{4}}{|A_3|}^{\frac{5}{4}}\lesssim {|A_1|}^{\frac{1}{2}}{|A_2|}^{\frac{5}{2}}{|A_3|}^{\frac{5}{4}}.
\end{split}
\end{equation*}
Moreover, 
\begin{equation*}
S_{1,2,2}\leq\sum_{l\in\mathcal{L}_{2,2}\cap\mathcal{L}_{3,2}}\alpha_{2,l}\alpha_{3,l}\leq {\left(\sum_{l\in\mathcal{L}_{2,2}}{\alpha_{2,l}}^2\right)}^{\frac{1}{2}}{\left(\sum_{l\in\mathcal{L}_{3,2}}{\alpha_{3,l}}^2\right)}^{\frac{1}{2}}\ll {|A_2|}^2{|A_3|}^2.
\end{equation*}
Thus, combining the above three bounds, we get
\begin{equation}\label{eq27}
S_{1,2,2}\ll\min\left\{|A_1|{|A_2|}^{\frac{9}{4}}{|A_3|}^{\frac{5}{4}},{|A_1|}^{\frac{1}{2}}{|A_2|}^{\frac{5}{2}}{|A_3|}^{\frac{5}{4}},{|A_2|}^2{|A_3|}^2\right\}.
\end{equation}
Note that for $i=2,3$, 
\[\sum_{\substack{l\in\mathcal{L}\\\alpha_{i,l}=1}}{\alpha_{i,l}}^s\leq|\mathcal{L}|\leq {|A_1|}^2{|A_2|}^2\leq {|A_i|}^4.\]
Thus, by Cauchy-Schwarz inequality and Lemma \ref{lemma4.1.3}, we have
\begin{equation}\label{eq28}
\begin{split}
S_{2,j_2,j_3}\leq&{\left(\sum_{l\in\mathcal{L}_{1,2}}{\alpha_{1,l}}^2\right)}^{\frac{1}{2}}{\left(\sum_{\substack{l\in\mathcal{L}\\\alpha_{2,l}\in I_{j_2},\alpha_{3,l}\in I_{j_3}}}{\left(\alpha_{2,l}\alpha_{3,l}\right)}^2\right)}^{\frac{1}{2}}\\
\leq&{\left(\sum_{l\in\mathcal{L}_{1,2}}{\alpha_{1,l}}^2\right)}^{\frac{1}{2}}{\left({\left(\sum_{\substack{l\in\mathcal{L}\\\alpha_{2,l}\in I_{j_2}}}{\left({\alpha_{2,l}}^2\right)}^2\right)}^{\frac{1}{2}}{\left(\sum_{\substack{l\in\mathcal{L}\\\alpha_{3,l}\in I_{j_3}}}{\left({\alpha_{3,l}}^2\right)}^2\right)}^{\frac{1}{2}}\right)}^{\frac{1}{2}}\\
=&{\left(\sum_{l\in\mathcal{L}_{1,2}}{\alpha_{1,l}}^2\right)}^{\frac{1}{2}}{\left(\sum_{\substack{l\in\mathcal{L}\\\alpha_{2,l}\in I_{j_2}}}{\alpha_{2,l}}^4\right)}^{\frac{1}{4}}{\left(\sum_{\substack{l\in\mathcal{L}\\\alpha_{3,l}\in I_{j_3}}}{\alpha_{3,l}}^4\right)}^{\frac{1}{4}}\\
\ll&{|A_1|}^2{|A_2|}^{\frac{5}{4}}{|A_3|}^{\frac{5}{4}}
\end{split}
\end{equation}
Similarly, for $S_{1,1,j_3}$ and $S_{1,2,1}$, we have the following estimates:
\begin{equation}\label{eq29}
\begin{split}
S_{1,1,j_3}&\leq\sum_{\substack{l\in\mathcal{L}\\\alpha_{3,l}\in I_{j_3}}}\alpha_{3,l}\leq {|\mathcal{L}|}^{\frac{3}{4}}{\left(\sum_{\substack{l\in\mathcal{L}\\\alpha_{3,l}\in I_{j_3}}}{\alpha_{3,l}}^4\right)}^{\frac{1}{4}}\\
&\lesssim {\left({|A_1|}^2{|A_2|}^2\right)}^{\frac{3}{4}}{\left({|A_3|}^5\right)}^{\frac{1}{4}}={|A_1|}^{\frac{3}{2}}{|A_2|}^{\frac{3}{2}}{|A_3|}^{\frac{5}{4}}
\end{split}
\end{equation}
and 
\begin{equation}\label{eq30}
\begin{split}
S_{1,2,1}&\leq\sum_{l\in\mathcal{L}_{2,2}}\alpha_{2,l}\leq {|\mathcal{L}|}^{\frac{3}{4}}{\left(\sum_{\substack{l\in\mathcal{L}\\\alpha_{2,l}\in I_{j_2}}}{\alpha_{2,l}}^4\right)}^{\frac{1}{4}}\lesssim {\left({|A_1|}^2{|A_2|}^2\right)}^{\frac{3}{4}}{\left({|A_2|}^5\right)}^{\frac{1}{4}}\\
&\ll {|A_1|}^{\frac{3}{2}}{|A_2|}^{\frac{11}{4}}
\end{split}
\end{equation}
To sum up, by equation \eqref{eq27}, \eqref{eq28}, \eqref{eq29}, and \eqref{eq30}, we know that 
\begin{equation*}
\begin{split}
T^o(A_1,A_2,A_3)\leq&\sum_{j_1,j_2,j_3}S_{j_1,j_2,j_3}\\
\lesssim&\min\left\{|A_1|{|A_2|}^{\frac{9}{4}}{|A_3|}^{\frac{5}{4}},{|A_1|}^{\frac{1}{2}}{|A_2|}^{\frac{5}{2}}{|A_3|}^{\frac{5}{4}},{|A_2|}^2{|A_3|}^2\right\}\\
&+{|A_1|}^2{|A_2|}^{\frac{5}{4}}{|A_3|}^{\frac{5}{4}}+{|A_1|}^{\frac{3}{2}}{|A_2|}^{\frac{3}{2}}{|A_3|}^{\frac{5}{4}}+{|A_1|}^{\frac{3}{2}}{|A_2|}^{\frac{11}{4}}\\
\lesssim&\min\left\{|A_1|{|A_2|}^{\frac{9}{4}}{|A_3|}^{\frac{5}{4}},{|A_1|}^{\frac{1}{2}}{|A_2|}^{\frac{5}{2}}{|A_3|}^{\frac{5}{4}},{|A_2|}^2{|A_3|}^2\right\}\\
&+{|A_1|}^2{|A_2|}^{\frac{5}{4}}{|A_3|}^{\frac{5}{4}}+{|A_1|}^{\frac{3}{2}}{|A_2|}^{\frac{3}{2}}{|A_3|}^{\frac{5}{4}}.
\end{split}
\end{equation*}
As for the estimate of $T(A_1,A_2,A_3)$, consider those terms which are counted by $T(A_1,A_2,A_3)$ but not by $T^o(A_1,A_2,A_3)$. For $(a_1,a_2)\in {A_1}^2,(b_1,b_2)\in {A_2}^2,(c_1,c_2)\in {A_3}^2$ with 
\begin{equation}\label{eq31}
(b_1-a_1)(c_2-a_2)=(c_1-a_1)(b_2-a_2),
\end{equation}
if they are distinct and collinear, then $a_1\neq b_1,a_1\neq c_1$, and $b_1\neq c_1$. On the other hand, if $a_1=b_1$, there are at most 
\[|A_1\cap A_2\cap A_3||A_1||A_2||A_3|+{|A_1\cap A_2|}^2{|A_3|}^2\]
solutions to the equation \eqref{eq31}. For the case $a_1=c_1$ and $b_1=c_1$, we have the bound 
\[|A_1\cap A_2\cap A_3||A_1||A_2||A_3|+{|A_1\cap A_3|}^2{|A_2|}^2\]
and
\[|A_1\cap A_2\cap A_3||A_1||A_2||A_3|+{|A_2\cap A_3|}^2{|A_1|}^2\text{, respectively.}\]
To sum up, we have $T(A_1,A_2,A_3)-T^o(A_1,A_2,A_3)\ll{|A_1|}^2{|A_3|}^2$.
\end{proof}
\subsection{The number of bisectors in a subset of \texorpdfstring{${\left(\faktor{\mathbb{Z}}{p^3\mathbb{Z}}\right)}^2$}{}}
While working on problems about distances between pairs of points in a given point set $\mathcal{P}\subseteq\mathbb{R}^2$, it is natural to ask the total number of possible bisectors and the incidence relation between them. Moreover, Hanson, Lund, and Roche-Newton generalized this question in $\mathbb{F}_p^2$ in \cite{Hanson2016distinct}. To start with, for a vector $x=(x_1,x_2)\in\mathbb{F}_p^2$, define 
\[\|x\|\equiv {x_1}^2+{x_2}^2\mod p\]
and for two points $x,y\in\mathbb{F}_p^2$, call $\|x-y\|$ as the ``distance'' between them. Notice that although this distance is not a norm (even not a metric) in the mathematical sense, it still preserves several structures of $\mathbb{F}_p^2$. Next, the authors defined the ``perpendicular bisector'' between two points $x,y\in\mathbb{F}_p^2$ to be
\[B(x,y):=\{z\in\mathbb{F}_p^2\mid\|z-x\|=\|z-y\|\}\]
and for a point set $\mathcal{P}\subseteq\mathbb{F}_p^2$, define
\[B(\mathcal{P}):=\{B(x,y)\mid x,y\in\mathcal{P}\}.\]
After these settings, in \cite{Hanson2016distinct}, the authors proved the following theorem:
\begin{theorem}[Theorem 1, \cite{Hanson2016distinct}]\label{theorem4.2.1}
If a point set $\mathcal{P}\subseteq\mathbb{F}_p^2$ with $|\mathcal{P}|\gg p^{\frac{3}{2}}$, then 
\[|B(\mathcal{P})|\gg p^2.\]
In other words, the order of their perpendicular bisector is as large as the order of all lines.
\end{theorem}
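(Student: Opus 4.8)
The plan is to bound $|B(\mathcal{P})|$ from below by a second-moment (Cauchy--Schwarz) argument, after first recording that every perpendicular bisector is an ordinary line. Expanding the defining relation $\|z-x\|=\|z-y\|$ and cancelling the quadratic terms in $z$ gives
\[B(x,y)=\{z\in\mathbb{F}_p^2 : 2\langle y-x,z\rangle=\|y\|-\|x\|\},\]
where $\langle\cdot,\cdot\rangle$ is the standard dot product; for $x\neq y$ this is a genuine line, with normal direction $y-x$, through the midpoint $(x+y)/2$. Thus $|B(\mathcal{P})|$ is exactly the number of distinct lines produced by $(x,y)\mapsto B(x,y)$. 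Writing $m(\ell)$ for the number of ordered pairs $(x,y)\in\mathcal{P}^2$ with $B(x,y)=\ell$, we have $\sum_\ell m(\ell)=|\mathcal{P}|^2-|\mathcal{P}|$, and Cauchy--Schwarz gives
\[|B(\mathcal{P})|\geq\frac{\left(\sum_\ell m(\ell)\right)^2}{\sum_\ell m(\ell)^2}\gg\frac{|\mathcal{P}|^4}{Q},\qquad Q:=\sum_\ell m(\ell)^2 .\]
So the whole problem reduces to proving $Q\ll|\mathcal{P}|^4/p^2$, where $Q$ counts quadruples $(x,y,x',y')\in\mathcal{P}^4$ with $B(x,y)=B(x',y')$; then $|B(\mathcal{P})|\gg p^2$ follows at once.

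To understand $Q$, I would use that, in the anisotropic case $p\equiv 3\pmod 4$ (no nonzero isotropic vectors), for every $x\neq y$ the reflection $\sigma_{B(x,y)}$ across the axis $B(x,y)$ is a well-defined involution and is the \emph{unique} reflection sending $x$ to $y$. Two pairs then share a bisector iff the common reflection swaps both, i.e.\ $B(x,y)=B(x',y')$ iff $\sigma_{B(x,y)}(x')=y'$; computing the linear part yields $y'=y+\rho_{y-x}(x'-x)$ with $\rho_v(w)=w-2\frac{\langle w,v\rangle}{\|v\|}v$. Hence
\[Q=\sum_{x,x'\in\mathcal{P}}\big|\{y\in\mathcal{P} : y+\rho_{y-x}(x'-x)\in\mathcal{P}\}\big|.\]
Every quadruple counted by $Q$ satisfies $\|x-x'\|=\|y-y'\|$, but that single equation alone admits $\sim|\mathcal{P}|^4/p$ solutions, so the bound $Q\le|\{\,\|x-x'\|=\|y-y'\|\,\}|$ loses a full factor of $p$; the involution structure must be exploited to recover it. Equivalently, the required estimate asserts that the reflected images $\sigma_{B(x,y)}(x')$ are equidistributed in $\mathbb{F}_p^2$ as $(x,y,x')$ ranges over $\mathcal{P}^3$, each landing in $\mathcal{P}$ with essentially the expected probability $|\mathcal{P}|/p^2$.

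To produce this extra saving I would take one of two routes. Either (i) expand $\mathbf{1}_\mathcal{P}=|\mathcal{P}|/p^2+g$ in the reflection-sum form $Q=\sum_{\sigma}|\mathcal{P}\cap\sigma\mathcal{P}|^2$ and control the fluctuation terms by square-root cancellation in the Gauss/Kloosterman sums attached to the orthogonal group of the anisotropic form (the rotations form a cyclic group of order $p+1$, which makes these sums tractable); or (ii) linearise the coincidence condition — fixing the slope of $y-x$, ``$B(x,y)=B(x',y')$'' becomes equality of that slope together with one bilinear equation, and clearing denominators recasts $Q$ as an incidence count between points and planes in $\mathbb{F}_p^3$, to which the Rudnev point--plane bound (Theorem \ref{theorem3.3.1}) applies, exactly as additive energy was handled in Theorem \ref{theorem3.3.2}. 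Either route is expected to give $Q\ll|\mathcal{P}|^4/p^2+|\mathcal{P}|^2p$; since $|\mathcal{P}|\gg p^{3/2}$ forces $|\mathcal{P}|^2p\ll|\mathcal{P}|^4/p^2$, the first term dominates and $|B(\mathcal{P})|\gg|\mathcal{P}|^4/Q\gg p^2$.

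The main obstacle is precisely this last step: beating the trivial equal-distance bound by the factor $p$. The difficulty is that the naive point-plane or energy setup has too many objects (about $|\mathcal{P}|^2$ of them) to give the needed saving directly, so the argument must use the rigidity of reflections — that $\sigma(x)=y$ pins down the axis — to keep the count of collinear planes (equivalently, the maximum multiplicity $k$ in Theorem \ref{theorem3.3.1}) at $O(|\mathcal{P}|)$. Verifying $k\ll|\mathcal{P}|$ and disposing of the degenerate configurations (pairs with $x'-x$ fixed by the reflection, isotropic directions, and the lower-order diagonal quadruples) is where the real work lies, and it is also where the anisotropy $p\equiv3\pmod4$ and the threshold $|\mathcal{P}|\gg p^{3/2}$ are genuinely used.
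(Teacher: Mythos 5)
Your opening reduction is exactly the right one, and it is the same one used in \cite{Hanson2016distinct} (note that the paper quotes this statement from there rather than proving it; the paper's own version of the argument appears in the proof of its $\mathbb{Z}_{p^3}$ analogue, Theorem \ref{main}): identify each bisector as a line, let $m(\ell)$ be its multiplicity, apply Cauchy--Schwarz, and reduce everything to the bisector energy $Q=\sum_\ell m(\ell)^2$, with target bound $Q\ll|\mathcal{P}|^4/p^2+p|\mathcal{P}|^2$. Your route (i) is, in substance, the argument actually used there. The problem is that you never prove the target bound: it enters your write-up as ``either route is expected to give,'' and you yourself concede that beating the equal-distance count by a factor of $p$ ``is where the real work lies.'' That estimate is not a deferred verification --- it \emph{is} the theorem. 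What your proposal actually establishes is only $|B(\mathcal{P})|\gg|\mathcal{P}|^4/Q$ together with (heuristically) $Q\lesssim|\mathcal{P}|^4/p$, i.e.\ roughly $|B(\mathcal{P})|\gg p$, far short of $p^2$.

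Concretely, the completed argument needs two inputs, neither of which your sketch supplies. First, a per-distance decomposition: since a common reflection forces $\|x-x'\|=\|y-y'\|$, one fixes $d$, forms the graph on $\Pi_d=\{(x,y):\|x-y\|=d\}$ whose edges are coinciding non-degenerate bisectors, computes the common-neighbour counts $\mathcal{N}(x,y)$ \emph{exactly} (this is Lemma 7 of \cite{Hanson2016distinct}, the $\mathbb{F}_p$ counterpart of Conjecture \ref{key_conj}, where it is a theorem rather than a conjecture), writes $A^2=cJ+c'I+E$, bounds the second eigenvalue via Theorem \ref{GCT}, and applies Theorem \ref{EML} to get the analogue of Proposition \ref{proposition4.2.14}, namely $|Q_d|\ll|\Pi_d|^2/p+p|\Pi_d|$. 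Second, to sum this over $d$ one needs the distance-energy second moment $\sum_d|\Pi_d|^2\ll|\mathcal{P}|^4/p+p|\mathcal{P}|^2$ (the analogue of Lemma \ref{lemma4.2.15}); your global formulation $Q=\sum_\sigma|\mathcal{P}\cap\sigma\mathcal{P}|^2$ never confronts this quantity at all, and without it the spectral route does not close --- ``square-root cancellation in Gauss/Kloosterman sums'' is a hope, not an argument, precisely because the quasirandomness available here lives in the fixed-distance graphs, not in the unrestricted sum over reflections. Your route (ii) via Theorem \ref{theorem3.3.1} is genuinely viable (bisector energy in $\mathbb{F}_p^2$ has since been bounded this way in the literature), but it too requires fixing the direction of $y-x$, running an incidence bound for each of the $\sim p$ directions, and controlling the accumulated error terms, none of which is carried out. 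In short: your skeleton coincides with the paper's strategy, but the body of the proof --- the exact reflection counts, the eigenvalue bound, and the distance-energy estimate --- is absent.
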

\par
One of my research projects is to extend this result to more general settings. However, it turns out it is a difficult problem to obtain a similar result as in $\mathbb F_p$ because losing the structure of being a field creates many difficult problems. We now consider point sets in ${\left(\faktor{\mathbb{Z}}{p^3\mathbb{Z}}\right)}^2$ where $p$ is a Gaussian prime, instead of in $\mathbb{F}_p^2$. In the following text, we denote $\faktor{\mathbb{Z}}{p^3\mathbb{Z}}$ as $\mathbb{Z}_{p^3}$ or $\mathbb{Z}_q$, where $q=p^3$, for simplicity. The reason why we choose $\mathbb{Z}_q^2$ out of the general case ${\left(\faktor{\mathbb{Z}}{n\mathbb{Z}}\right)}^2$ is that we want to reduce the number of solutions to $\|x\|=0$, which would be very annoying if there are too many solutions. Notice that the above definition of $\|\cdot\|$, ``distance,'' and ``perpendicular bisector'' are also valid in the case of $\mathbb{Z}_q^2$. Our study in this scope is the following result in $\Z_q^2$ which is based on a conjecture that I made which I believe to be true.
\begin{theorem}\label{main}
Assume that Conjecture \ref{key_conj} holds. For any $m>\frac{5}{3}$ and point set $\mathcal{P}\subseteq\Z_q^2$ with $|\mathcal{P}|\gg q^m$, we have
\[|B(\mathcal{P})|\gg q^{2(m-1)}.\]
\end{theorem}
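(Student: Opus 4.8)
The plan is to mimic the strategy that Hanson, Lund, and Roche-Newton used over $\mathbb{F}_p$ (Theorem~\ref{theorem4.2.1}), namely to bound the number of \emph{incidences} between points of $\mathcal{P}$ and the bisectors in $B(\mathcal{P})$, and then combine a Cauchy-Schwarz lower bound for this incidence count with an upper bound coming from the assumed Conjecture~\ref{key_conj}. The key observation over $\mathbb{F}_p$ is that a bisector $B(x,y)$ is a line, and each point $z$ lies on $B(x,y)$ precisely when $\|z-x\|=\|z-y\|$; dualizing, one studies the quantity
\[
\mathcal{I} := \bigl|\{(z,B(x,y)) \mid z\in\mathcal{P},\ \|z-x\|=\|z-y\|\}\bigr|,
\]
or more usefully an energy-type count of collinear/equidistant triples. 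First I would set up the analogous incidence quantity in $\Z_q^2$, counting quadruples $(x,y,z,w)\in\mathcal{P}^4$ for which $z,w$ lie on the same bisector $B(x,y)$, i.e. for which $\|z-x\|-\|z-y\|=\|w-x\|-\|w-y\|=0$. A double counting / Cauchy-Schwarz step then yields
\[
\frac{|\mathcal{P}|^4}{|B(\mathcal{P})|} \ll (\text{energy count}),
\]
so that a \emph{lower} bound on $|B(\mathcal{P})|$ follows from an \emph{upper} bound on the energy; it is precisely this upper bound that I expect Conjecture~\ref{key_conj} to supply.

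Next I would carry out the reduction to the conjectural input carefully in the ring $\Z_q$, $q=p^3$. The essential difference from the field case is that $\Z_q$ has zero divisors — the ideal $(p)$ — so the expression $\|z-x\|=(z_1-x_1)^2+(z_2-x_2)^2$ can vanish or coincide for structured reasons that have no analogue over $\mathbb{F}_p$. I would stratify the points $z-x$ according to their reduction modulo $p$ (i.e.\ according to the largest power of $p$ dividing the coordinates), so that on each stratum the relevant differences become invertible and the bisector behaves like an honest line in a free rank-one piece. The hypothesis that $p$ is a $4n+3$ prime is exactly what guarantees that $-1$ is a non-residue, so that $\|x\|\equiv x_1^2+x_2^2\equiv 0$ forces $x_1\equiv x_2\equiv 0$ modulo $p$; this controls the isotropic directions and keeps the number of degenerate solutions to $\|x\|=0$ small, which is the reason $\Z_{p^3}$ was singled out. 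With the strata under control, the quantities $q^m$ and the target $q^{2(m-1)}$ should emerge from balancing the contribution of the generic stratum against the $p$-divisible strata, using $|\Z_q^2|=q^2$ and the threshold $m>\tfrac{5}{3}$ to ensure the generic term dominates.

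The main obstacle, as anticipated, will be the passage through the zero divisors: unlike $\mathbb{F}_p$, where every nonzero difference is invertible and each bisector is a genuine line carrying at most $O(\sqrt{|\mathcal{P}|}\,)$-type incidences, in $\Z_{p^3}$ a ``bisector'' can be a union of cosets or a thickened line supported on the ideal $(p)$, and two \emph{distinct} pairs $(x,y)$ can produce the \emph{same} bisector for reasons invisible over a field. I expect Conjecture~\ref{key_conj} to be precisely the statement that tames this phenomenon — either an upper bound for the relevant equidistant-triple energy, or equivalently a bound on how many pairs share a bisector — and the real work is to show that, once that conjectural energy bound is granted, the stratification above closes the argument. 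The remaining steps are then the bookkeeping of summing the per-stratum incidence estimates and verifying that the exponent $2(m-1)$ is what the Cauchy-Schwarz inequality delivers; I would keep a close eye on the constraint $m>\tfrac{5}{3}$, since it is the precise threshold at which the generic-stratum contribution overtakes the degenerate one.
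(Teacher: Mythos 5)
Your high-level skeleton --- lower-bound $|B(\mathcal{P})|$ by Cauchy--Schwarz against a bisector ``energy'' and upper-bound that energy using the conjecture --- is indeed the paper's strategy, but the energy you actually wrote down is the wrong one, and the passage from Conjecture~\ref{key_conj} to the energy bound, which you delegate entirely to the conjecture, is where almost all of the paper's work lies. Concretely: the quantity that Cauchy--Schwarz pairs with $|B(\mathcal{P})|$ is the bisector \emph{multiplicity} energy $\sum_{l\in B(\mathcal{P})}w(l)^2$, where $w(l)$ is the number of pairs $(x,y)\in\mathcal{P}^2$ with $B(x,y)=l$; equivalently, the number of quadruples with $B(x,z)=B(y,w)$ (the paper's $Q'(\mathcal{P})$, restricted to non-isotropic bisectors and to pairs whose coordinate differences are units). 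Your displayed count --- quadruples with $\|z-x\|=\|z-y\|$ and $\|w-x\|=\|w-y\|$, i.e.\ $z,w\in B(x,y)$ --- is a point--bisector incidence energy, a genuinely different quantity, and Cauchy--Schwarz applied to it does not produce $|\mathcal{P}|^4/|B(\mathcal{P})|$. Your parenthetical alternative (``a bound on how many pairs share a bisector'') is the correct object, but the two counts are not equivalent, so the first display of your plan fails as written.

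The larger gap is that Conjecture~\ref{key_conj} is not an energy bound at all: it is an exact count of pairs of reflections carrying one pair of points to another (the distribution of $\mathcal{N}(x,y)$). Converting it into the needed bound $|Q'(\mathcal{P})|\ll|\mathcal{P}|^4/q^2+q^2|\mathcal{P}|^2$ (Lemma~\ref{lemma4.2.16}) takes three ingredients absent from your plan: (i) Lemma~\ref{lemma4.2.6}, which shows a shared non-isotropic bisector forces $\|x-y\|=\|z-w\|$, so $Q'$ decomposes over common distances $d\in\Z_q^*$; (ii) for each fixed $d$, a spectral argument on the graph whose vertices are pairs at distance $d$ and whose edges record equal bisectors: the conjecture says the square of the adjacency matrix equals $(p^3-3p^2)J+(p^6-p^5-p^3+3p^2)I+E$ with controlled error, so the Gershgorin Circle Theorem (Theorem~\ref{GCT}) bounds the nontrivial eigenvalues by $O(p^5)$ and the Expander Mixing Lemma (Theorem~\ref{EML}) yields $|Q_d'|\ll|\Pi_d'|^2/p^3+p^5|\Pi_d'|$ (Proposition~\ref{proposition4.2.14}); and (iii) a separate, conjecture-free second-moment bound $\sum_d|\Pi_d'|^2\ll|\mathcal{P}|^4/q+q^3|\mathcal{P}|^2$ (Lemma~\ref{lemma4.2.15}) needed to sum over $d$. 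Finally, the paper does not need your stratification by $p$-adic valuation: it simply restricts to $\widetilde{\mathcal{P}}$, the pairs whose coordinate differences are units, and the hypothesis $m>\frac{5}{3}$ enters exactly (and only) to guarantee $|\widetilde{\mathcal{P}}|=\Theta(|\mathcal{P}|^2)$, since at most $O(|\mathcal{P}|q^{5/3})$ pairs are discarded; the final computation then gives $|B(\mathcal{P})|\gg|\mathcal{P}|^4/\bigl(|\mathcal{P}|^4/q^2+q^2|\mathcal{P}|^2\bigr)\gg q^{-2}|\mathcal{P}|^2\gg q^{2(m-1)}$, a step your plan gestures at but never reaches.
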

\par
To prove this theorem, let us start with several fundamental geometric definitions in $\mathbb{Z}_q^2$.
\begin{definition}[rotation, reflection, and translation]
~
\begin{enumerate}
\item A matrix of the form
\[\begin{pmatrix}a&-b\\b&a\end{pmatrix},a^2+b^2=1\]
is called a rotation matrix. For $u\in \mathbb{Z}_q^2$ and $R$ is a rotation matrix, then a \textit{rotation} about $u$ is an affine map of the form
\[\mathcal{R}:v\mapsto R(v-u)+u.\]
\item A matrix of the form 
\[\begin{pmatrix}a&b\\b&-a\end{pmatrix},a^2+b^2=1\]
is called a reflection matrix. A \textit{reflection} about $u$ by a reflection matrix $S$ is an affine map of the form 
\[\mathcal{S}:v\mapsto S(v-u)+u.\]
\item A \textit{translation} by $u$ is an affine map of the form
\[\mathcal{T}:v\mapsto v+u.\]
We call a translation non-trivial if $u\neq0$.
\end{enumerate}
\end{definition}
\begin{definition}[non-isotropic]
A line $l:ax+by=c$ is called \textit{non-isotropic} if $a,b$, and $a^2+b^2$ are units. Otherwise, it is isotropic.
\end{definition}
Based on these definitions above, we followed the steps of Hanson, Lund, and Roche-Newton and modified their proving techniques to generalize their results into the case of $\Z_q^2$. To prove the main theorem, we need the following Lemma \ref{lemma4.2.5} to Lemma \ref{lemma4.2.8}. However, because their proof will need more fundamental lemmas, we move their proofs to Appendix C. for the sake of fluency.
\begin{lemma}\label{lemma4.2.5}
For $d\in\Z_q$, let $Q(d)$ be the number of solutions to $x^2\equiv d\mod q$. 
\[Q(d)=\begin{cases}
2&\text{if }\legendre{d}{p}=1\\
0&\text{if }\legendre{d}{p}=-1\\
0&\text{if }p\mid d\text{ and }p^2\nmid d\\
2p&\text{if }p^2\mid d\text{ and }\legendre{d/p^2}{p}=1\\
0&\text{if }p^2\mid d\text{ and }\legendre{d/p^2}{p}=-1\\
p&\text{if }d=0
\end{cases}\]
where $\legendre{d}{p}$ is the Legendre symbol.
\end{lemma}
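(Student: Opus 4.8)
The plan is to stratify the count according to the $p$-adic valuation of $d$, i.e.\ according to the largest power of $p$ dividing $d$, noting that as a representative in $\Z_q$ with $q=p^3$ the only possibilities are $v_p(d)\in\{0,1,2\}$ together with the degenerate case $d=0$. Throughout I would use the single arithmetic input that $p$ is odd (being a $4n+3$ prime), so that $2$ is a unit in $\Z_q$; this is exactly what makes the derivative of $x^2-d$ behave well on units and separates the nonsingular from the singular cases.

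In the unit case $v_p(d)=0$, any solution $x$ is forced to be a unit, so I first reduce the congruence to $x^2\equiv d\pmod p$. Since $p$ is odd, this has $1+\legendre{d}{p}$ solutions, i.e.\ two when $\legendre{d}{p}=1$ and none when $\legendre{d}{p}=-1$. For each solution $x_0$ modulo $p$ the derivative $2x_0$ is a unit, so Hensel's lemma lifts $x_0$ \emph{uniquely} first modulo $p^2$ and then modulo $p^3$. Hence $Q(d)=2$ or $Q(d)=0$ respectively, matching the first two lines.

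For the ramified cases $v_p(d)\ge 1$ the derivative $2x$ vanishes modulo $p$, so Hensel no longer applies and I instead substitute directly. If $v_p(d)=1$, then $x^2\equiv d\equiv 0\pmod p$ forces $p\mid x$; writing $x=py$ gives $p^2\mid x^2\equiv d$, contradicting $v_p(d)=1$, so $Q(d)=0$. If $v_p(d)=2$, write $d=p^2e$ with $p\nmid e$; again $p\mid x$, so $x=py$ and the congruence modulo $p^3$ collapses to $y^2\equiv e\pmod p$, which has $1+\legendre{e}{p}=1+\legendre{d/p^2}{p}$ residues modulo $p$. Each such residue of $y$ then corresponds to exactly $p$ admissible values of $x=py$ in $\Z_q$, because $x$ depends only on $y\bmod p^2$ while the next base-$p$ digit of $y$ is unconstrained; multiplying gives $Q(d)=2p$ when $\legendre{d/p^2}{p}=1$ and $0$ when it is $-1$. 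Finally, for $d=0$ the equation $p^3\mid x^2$ forces $v_p(x)\ge 2$, so $x$ ranges over the $p$ multiples of $p^2$ in $\Z_q$, yielding $Q(0)=p$.

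The only genuinely delicate point is the lift-counting in the $v_p(d)=2$ branch: because the derivative is singular there, the clean one-to-one Hensel correspondence is replaced by a $p$-to-one correspondence, and producing the factor $p$ (rather than $1$ or $p^2$) requires keeping precise track of which base-$p$ digits of $x$ are pinned down by the congruence and which are free. The remaining cases reduce to routine substitution and a direct count of multiples of a power of $p$.
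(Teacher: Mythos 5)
Your proof is correct, and it uses the same stratification as the paper's proof: both split into the cases $d$ a unit, $v_p(d)=1$, $v_p(d)=2$, and $d=0$, reduce each to a quadratic congruence modulo $p$ (whence the Legendre symbols), and then count lifts to $\Z_q$. The difference lies in the lifting engine. The paper writes $x=x_1p^2+x_2p+x_3$ with $x_i\in\F_p$, expands $x^2$ modulo $q$, and solves for the digits one at a time, tracking the carries explicitly via terms such as $\left\lfloor x_3^2/p\right\rfloor$; this is in effect Hensel's lemma unrolled by hand. You instead invoke Hensel's lemma as a black box in the nonsingular case (where $2x_0$ is a unit because $p$ is odd and $x_0$ is a unit), and in the singular cases you substitute $x=py$, which collapses $x^2\equiv d\pmod{p^3}$ to $y^2\equiv d/p^2\pmod p$ with the top base-$p$ digit of $y$ left free. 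Your route buys a cleaner argument: it avoids all carry bookkeeping, it makes transparent why the count in the $v_p(d)=2$ case is $2p$ rather than $2$ or $2p^2$ (one digit of $y$ constrained, one free), and it generalizes verbatim to $\Z/p^k\Z$ for any $k$. What the paper's computation buys in exchange is self-containedness: it needs no appeal to Hensel's lemma, only elementary manipulation of base-$p$ expansions, which is in keeping with the explicit digit computations the paper reuses in the proof of Lemma C.4 (Lemma \ref{lemma4.2.8}). Both arguments are complete.
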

\begin{lemma}\label{lemma4.2.6}
If $x,y,z,w\in\mathbb{Z}_q^2$ are such that $B=B(x,z)=B(y,w)$ is non-isotropic then 
\[\|x-y\|=\|z-w\|.\]
\end{lemma}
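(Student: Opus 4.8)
The plan is to use the geometric meaning of the bisector: $x$ and $z$ are mirror images across $B(x,z)$, while $y$ and $w$ are mirror images across $B(y,w)$, and a reflection preserves $\|\cdot\|$; since the two bisectors coincide, the same reflection carries $x\mapsto z$ and $y\mapsto w$, and the conclusion is immediate. First I would write the bisector down explicitly. Expanding $\|v-x\|=\|v-z\|$ and cancelling the common $\|v\|$ term gives the linear equation
\[
\langle v,\,z-x\rangle=\tfrac12\bigl(\|z\|-\|x\|\bigr),\qquad \langle u,v\rangle:=u_1v_1+u_2v_2,
\]
so $B(x,z)$ is a line with normal direction $z-x$. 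Comparing with the form $ax+by=c$, the hypothesis that $B$ is non-isotropic says precisely that $z_1-x_1$, $z_2-x_2$ and $\|z-x\|=(z_1-x_1)^2+(z_2-x_2)^2$ are all units of $\Z_q$ (here $2$ is a unit because $p$ is odd).

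Next I would construct the reflection about $B$ and verify it swaps the pair. Since $\|z-x\|$ is a unit I may set $d:=z-x$ and $m:=\tfrac12(x+z)$ and form
\[
Mv:=v-\frac{2\langle v,d\rangle}{\|d\|}\,d,\qquad \mathcal{S}(v):=M(v-m)+m .
\]
A direct computation shows the matrix of $M$ is $\begin{pmatrix}a&b\\ b&-a\end{pmatrix}$ with $a^2+b^2=1$, so $\mathcal{S}$ is a reflection in the sense defined above, and its fixed line is exactly $B(x,z)$ (one checks $m\in B(x,z)$ and that the $+1$-eigenspace of $M$ is $d^{\perp}$). Using $\langle x-m,d\rangle=-\tfrac12\|d\|$ one obtains $\mathcal{S}(x)=z$, and symmetrically $\mathcal{S}(z)=x$.

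The decisive step is to see that $\mathcal{S}$ depends only on the line $B$ and hence also swaps $y$ and $w$. The matrix $M$ is invariant under rescaling $d$, so it suffices to know that the normals of $B(x,z)$ and $B(y,w)$ are proportional by a unit. This is where the ring $\Z_{p^3}$ must be handled with care: I would divide the defining equation of each bisector by its leading coefficient, $z_1-x_1$ respectively $w_1-y_1$, both units by the non-isotropic hypothesis, to put both lines in the normalized shape $v_1+\beta v_2=\gamma$. Equality of the two lines then forces the slopes and intercepts to agree, whence $w-y=\mu(z-x)$ with $\mu$ a unit; thus the reflection built from $(y,w)$ is the very same map $\mathcal{S}$, and $\mathcal{S}(y)=w$. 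Finally, reflections are isometries for $\|\cdot\|$: for every $v$ one has $\|Mv\|=(a^2+b^2)\|v\|=\|v\|$, and since $\mathcal{S}$ is affine with linear part $M$ we have $\mathcal{S}(u)-\mathcal{S}(v)=M(u-v)$; therefore
\[
\|x-y\|=\|\mathcal{S}(x)-\mathcal{S}(y)\|=\|z-w\|.
\]

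The hard part will not be the reflection idea, which mirrors the Euclidean argument, but justifying it over $\Z_{p^3}$: every step relies on the non-isotropic condition to supply the units $z_1-x_1$, $z_2-x_2$, $\|z-x\|$ that make the reflection well defined and that upgrade the set-equality $B(x,z)=B(y,w)$ to proportionality of the defining normals. In the presence of zero divisors this upgrade can fail, since distinct normal vectors may cut out the same subset of $\Z_q^2$, so the non-isotropic hypothesis is exactly what rescues the classical reasoning.
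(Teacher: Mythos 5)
Your proposal is correct and takes essentially the same route as the paper: both arguments produce the reflection $\mathcal{S}$ fixing the common non-isotropic bisector, note that it swaps $x\leftrightarrow z$ and $y\leftrightarrow w$, and conclude via $\|x-y\|=\|\mathcal{S}(x)-\mathcal{S}(y)\|=\|z-w\|$ since the linear part of $\mathcal{S}$ preserves $\|\cdot\|$. The paper's version is terser, delegating the existence and uniqueness of the reflection (and the fact that it swaps each pair) to its Lemma C.1, whereas you construct $M$ and $\mathcal{S}$ explicitly and verify the unit-proportionality of the two normals over $\Z_{p^3}$ --- details the paper leaves implicit but which your write-up correctly identifies as the places where the non-isotropic hypothesis is used.
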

\begin{lemma}\label{lemma4.2.7}
For a given point $u\in\mathbb{Z}_q^2$ with $q=p^3$, there are $p^3-p^2$ non-isotropic lines passing $u$.
\end{lemma}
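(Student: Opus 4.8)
The plan is to set up a bijection between the non-isotropic lines through $u$ and a set of admissible slopes, and then to count those slopes using the Gaussian prime hypothesis $p\equiv 3\pmod 4$ that is in force throughout this subsection. First I would normalize: any non-isotropic line $\ell:ax+by=c$ has $a$ and $b$ units, so multiplying the defining equation by the unit $a^{-1}$ leaves the point set unchanged and puts $\ell$ into the form $x+sy=e$ with $s:=ba^{-1}$ and $e:=ca^{-1}$, where $s$ is again a unit. I would then check that this normal form is unique, i.e. that $x+sy=e$ and $x+s'y=e'$ define the same point set iff $s=s'$ and $e=e'$; this follows by evaluating the parametrization $\{(e-sy,\,y)\mid y\in\Z_q\}$ at $y=0$ and $y=1$. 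Dividing the non-isotropy requirement $a^2+b^2\in\Z_q^{\times}$ by the unit $a^2$ turns it into $1+s^2\in\Z_q^{\times}$, so the non-isotropic lines correspond bijectively to pairs $(s,e)$ in which $s$ is a unit and $1+s^2$ is a unit.

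Next I would impose the incidence $u\in\ell$. The normalized line $x+sy=e$ passes through $u=(u_1,u_2)$ precisely when $e=u_1+su_2$, which determines $e$ uniquely from $s$; hence the non-isotropic lines through $u$ are in bijection with the admissible slopes $s$ alone, and it remains to count them. An element of $\Z_{p^3}$ is a unit iff it is not divisible by $p$, so the conditions are $p\nmid s$ and $s^2\not\equiv-1\pmod p$. Here the hypothesis enters decisively: since $p\equiv 3\pmod 4$ we have $\legendre{-1}{p}=-1$, so $s^2\equiv-1\pmod p$ has no solution and $1+s^2$ is automatically a unit for \emph{every} $s$. The admissibility condition therefore collapses to ``$s$ is a unit,'' and the number of units in $\Z_{p^3}$ is $\phi(p^3)=p^3-p^2$, which is exactly the claimed count.

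The routine part is the normalization bijection; the only place demanding genuine care is that $\Z_q$ is not a field, so I must verify that scaling by $a^{-1}$ and the coordinate parametrization behave as in the field case, and this is precisely why the definition requires $a,b$ to be \emph{units} rather than merely nonzero. The truly load-bearing step is the final count: without $p\equiv 3\pmod 4$ there would be $2p^2$ additional units $s$, namely those reducing mod $p$ to one of the two square roots of $-1$, for which $1+s^2$ fails to be a unit, and the answer would instead be $p^3-3p^2$. It is exactly the non-residuosity of $-1$ modulo a $4n+3$ prime that removes these exceptional slopes and yields the clean value $p^3-p^2$.
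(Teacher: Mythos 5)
Your proof is correct and takes essentially the same approach as the paper's: both parametrize the candidate lines through $u$ by a unit slope (you via the normalized dual equation $x+sy=e$, the paper via the parametric form $u+t(1,a)$ with $a$ a unit), count the $\phi(p^3)=p^3-p^2$ units of $\Z_q$, and invoke $p\equiv 3\pmod 4$ to conclude that $1+s^2$ is automatically a unit, so no isotropic lines occur. Your write-up simply makes explicit the normal-form uniqueness and the Legendre-symbol step that the paper's terse argument leaves implicit.
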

\begin{lemma}\label{lemma4.2.8}
Suppose $u\in\mathbb{Z}_q^2$ and $\rho\in\mathbb{Z}_q$. Then, we have:
\[|C_\rho(u)|=\begin{cases}
p^2&\text{if }\rho\equiv0,\\
p^3+p^2&\text{if }\rho\in p^2\Z_q^*,\\
0&\text{if }\rho\in p\Z_q^*,\\
p^3+p^2&\text{if }\rho\in\Z_q^*.
\end{cases}\]
Define $D_{q,\rho}$ as the number of pairs $(u_1,u_2)\in\Z_q^2\times\Z_q^2$ such that $\|u_1-u_2\|\equiv\rho$. Then, we have
\[D_{q,\rho}=\begin{cases}
p^8&\text{if }\rho\equiv0,\\
p^9+p^8&\text{if }\rho\in p^2\Z_q^*,\\
0&\text{if }\rho\in p\Z_q^*,\\
p^9+p^8&\text{if }\rho\in\Z_q^*.
\end{cases}\]
\end{lemma}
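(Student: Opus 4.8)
The plan is to reduce everything to a single count: the size of the ``circle'' $C_\rho(u)=\left\{z\in\Z_q^2\mid\|z-u\|\equiv\rho\right\}$. Since $\|z-u\|=\|(z-u)-0\|$, the substitution $w=z-u$ shows that $|C_\rho(u)|=|C_\rho(0)|=\left|\left\{w\in\Z_q^2\mid w_1^2+w_2^2\equiv\rho\bmod q\right\}\right|$, so the center $u$ is irrelevant and we only need to count representations of $\rho$ by the form $w_1^2+w_2^2$ modulo $q=p^3$. The decisive structural input is that $p\equiv3\bmod4$ (the Gaussian-prime hypothesis), so $-1$ is a quadratic non-residue and hence $w_1^2+w_2^2\equiv0\bmod p$ forces $p\mid w_1$ and $p\mid w_2$; I will also use the classical mod-$p$ circle count, namely that $w_1^2+w_2^2\equiv c\bmod p$ has $p+1$ solutions when $c\not\equiv0$ and exactly one when $c\equiv0$ (the fiber count of the norm map $\F_{p^2}^\times\to\F_p^\times$, using $w_1^2+w_2^2=N(w_1+w_2 i)$). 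Equivalently, $|C_\rho(0)|$ is the additive self-convolution $\sum_{d\in\Z_q}Q(d)Q(\rho-d)$ of the function from Lemma \ref{lemma4.2.5}, and either viewpoint leads to the same bookkeeping.

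I would then organize the computation by the $p$-adic valuation $v=v_p(\rho)$, treating the cases $v=0$, $v=1$, $v=2$, and $\rho=0$ separately and lifting solutions level by level. If $\rho$ is a unit ($v=0$), every mod-$p$ solution is a smooth point of the curve $w_1^2+w_2^2=\rho$ (the gradient $(2w_1,2w_2)$ cannot vanish), so Hensel's lemma lifts each of the $p+1$ solutions uniquely through $\Z/p^2$ and $\Z/p^3$, contributing a factor $p^2$; this gives $|C_\rho(0)|=(p+1)p^2=p^3+p^2$. If $v=1$, anisotropy forces $w_i=pa_i$, whence $p^2(a_1^2+a_2^2)\equiv\rho\bmod p^3$; the left side has valuation at least $2$ while $\rho$ has valuation $1$, an impossibility, so $|C_\rho(0)|=0$. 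If $v=2$, writing $\rho=p^2u$ with $u$ a unit and $w_i=pa_i$ reduces the congruence to $a_1^2+a_2^2\equiv u\bmod p$ with $(a_1,a_2)$ ranging over $\Z/p^2$; the $p+1$ residue solutions mod $p$ each lift freely in the top digit, giving $(p+1)p^2=p^3+p^2$. Finally, if $\rho=0$, the same substitution forces $a_1^2+a_2^2\equiv0\bmod p$, hence $p\mid a_1,a_2$ again, leaving $p\cdot p=p^2$ choices. These four outputs are exactly the stated values of $|C_\rho(u)|$.

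For $D_{q,\rho}$ I would simply sum over the first point: fixing $u_1$, the number of $u_2$ with $\|u_1-u_2\|\equiv\rho$ is $|C_\rho(u_1)|=|C_\rho(0)|$ by the translation invariance established above, so $D_{q,\rho}=\sum_{u_1\in\Z_q^2}|C_\rho(0)|=q^2|C_\rho(0)|=p^6|C_\rho(0)|$. Multiplying the four circle counts by $p^6$ yields $p^8$, $p^9+p^8$, $0$, and $p^9+p^8$, exactly as claimed.

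The routine part is the algebra; the only place demanding care is the lift-multiplicity bookkeeping across the valuation cases — in particular distinguishing the smooth lifting of the $v=0$ case (unique lifts, total factor $p^2$) from the degenerate reductions in the $v=1,2$ and $\rho=0$ cases, where one must first peel off a factor of $p$ from both coordinates and re-enter the mod-$p$ count. I expect the main obstacle to be ensuring that each reduction step correctly accounts for the free higher-order digits, so that the accumulated lift factor is $p^2$ rather than $p$; this is precisely where the parity of $v$ and the anisotropy of $x^2+y^2$ modulo $p$ interact.
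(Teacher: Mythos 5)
Your proposal is correct and follows essentially the same route as the paper: both reduce to counting solutions of $w_1^2+w_2^2\equiv\rho \bmod p^3$, split into cases by the $p$-adic valuation of $\rho$, use the anisotropy of $x^2+y^2$ modulo $p$ (from $p\equiv3\bmod4$) to force $p\mid w_1,w_2$ in the degenerate cases, and multiply the $p+1$ mod-$p$ circle count by the appropriate lifting factor. The only cosmetic difference is that you package the unit case via Hensel's lemma while the paper unrolls the same lifting digit by digit (and, like the paper, the $D_{q,\rho}$ statement then follows by summing $|C_\rho(u_1)|$ over $u_1$).
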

After proving these lemmas, we faced a major problem that is much more complicated when considering $\Z_q^2$ rather than $\F_p^2$. To be more specific, in $\F_p^2$, Hanson, Lund, and Roche-Newton proved the following lemma:
\begin{lemma}[Lemma 7, \cite{Hanson2016distinct}]
Given any point pairs $x=(x_1,x_2)\in\mathbb{F}_p^2\times\mathbb{F}_p^2$ and $y=(y_1,y_2)\in\mathbb{F}_p^2\times\mathbb{F}_p^2$ with $\|x_1-x_2\|=\|y_1-y_2\|$, define $\mathcal{N}(x,y)$ to be the number of pairs $(\mathcal{R}_1,\mathcal{R}_2)$ of reflections with $y_i=\mathcal{R}_2\circ\mathcal{R}_1(x_i)$ for $i=1,2$. Then, 
\[\mathcal{N}(x,y)=\begin{cases}p-1&\text{if }x_1-x_2\neq y_1-y_2\text{ and }p\equiv1\mod 4,\\p+1&\text{if }x_1-x_2\neq y_1-y_2\text{ and }p\equiv3\mod 4,\\p&\text{if }x_1-x_2=y_1-y_2\text{ and }\|x_1-y_1\|\neq0,\\0&\text{if }x_1-x_2=y_1-y_2\text{ and }\|x_1-y_1\|=0.\end{cases}\]
\end{lemma}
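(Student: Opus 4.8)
The plan is to pass to the complex model of the quadratic form and to reduce the count of reflection pairs to a count of admissible reflection axes. I would identify $\mathbb{F}_p^2$ with the ring $K=\mathbb{F}_p[i]$, $i^2=-1$, which is the field $\mathbb{F}_{p^2}$ when $\legendre{-1}{p}=-1$ (that is, $p\equiv 3\pmod 4$) and the split ring $\mathbb{F}_p\times\mathbb{F}_p$ when $\legendre{-1}{p}=1$; under this identification $\|v\|$ becomes the norm $N(\zeta)=\zeta\bar\zeta$. In these coordinates a rotation is $\zeta\mapsto\omega\zeta+\tau$ with $N(\omega)=1$ and a reflection is $\zeta\mapsto\omega\bar\zeta+\beta$, so the composition of two reflections is the rotation $\zeta\mapsto(\omega_2\bar\omega_1)\zeta+(\omega_2\bar\beta_1+\beta_2)$. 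The upshot is that $g:=\mathcal{R}_2\circ\mathcal{R}_1$ is always a direct (orientation-preserving) isometry, i.e.\ a genuine rotation or a translation, and the defining conditions $g(x_i)=y_i$ force the linear part of $g$ to carry $x_1-x_2$ to $y_1-y_2$.

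First I would settle the case $x_1-x_2\neq y_1-y_2$. Since $\|x_1-x_2\|=\|y_1-y_2\|$, the rotation factor $\omega=(y_1-y_2)/(x_1-x_2)$ is the unique norm-one element carrying $x_1-x_2$ to $y_1-y_2$; it satisfies $\omega\neq 1$ exactly because the two differences disagree, and then $\tau$ is pinned down by $g(x_1)=y_1$, so there is a \emph{unique} direct isometry $g$ to realize, a rotation about the unique fixed point $p_0=\tau/(1-\omega)$. The key reduction is that a pair of reflections composes to a rotation about $p_0$ if and only if both reflection axes pass through $p_0$: given any reflection $\mathcal{R}_1$ with axis through $p_0$, the map $\mathcal{R}_2=g\circ\mathcal{R}_1$ is orientation-reversing and fixes $p_0$, hence is a genuine (glide-free) reflection through $p_0$; conversely a product of two reflections is a rotation only when the axes meet, the meeting point then being the centre. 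Thus $\mathcal{N}(x,y)$ equals the number of reflections through $p_0$, which equals the number of \emph{anisotropic} lines through $p_0$: of the $p+1$ lines through a point, exactly $1+\legendre{-1}{p}$ are isotropic, so the count is $(p+1)-\bigl(1+\legendre{-1}{p}\bigr)=p-\legendre{-1}{p}$, i.e.\ $p-1$ when $p\equiv1\pmod4$ and $p+1$ when $p\equiv3\pmod4$.

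Next I would treat the case $x_1-x_2=y_1-y_2$, where the linear part of $g$ is trivial and $g$ is the translation by $v:=y_1-x_1=y_2-x_2$. A product of two reflections is a translation precisely when the two axes are parallel, and the resulting translation is orthogonal (with respect to the form) to the common axis direction; so to realize $T_v$ the axes must point in the direction $v^{\perp}$, which is admissible as a reflection axis if and only if $v^{\perp}$—equivalently $v$—is anisotropic, i.e.\ $\|x_1-y_1\|\neq 0$. When $\|x_1-y_1\|\neq0$ I would choose the axis of $\mathcal{R}_1$ among the $p$ parallel lines of direction $v^{\perp}$ and note that $\mathcal{R}_2=T_v\circ\mathcal{R}_1$ is then again glide-free, giving exactly $p$ pairs; when $\|x_1-y_1\|=0$ there is no admissible axis and the count is $0$.

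The hard part will be the boundary and degeneracy checks rather than the main count. I would need to verify carefully that $g\circ\mathcal{R}_1$ is always glide-free in each case (this is where the fixed-point and anisotropy bookkeeping concentrates), to confirm in Case 1 that $\omega$, and hence $g$, is genuinely unique even when $x_1-x_2$ happens to be isotropic in the split setting, and to handle the truly degenerate configurations—most notably $v=0$ (forcing $x=y$), which is the only way $\|x_1-y_1\|=0$ can occur when $p\equiv3\pmod4$ and which must be excluded or interpreted so that the stated value $0$ is consistent. The sole arithmetic input throughout is the solution count of $w_1^2+w_2^2=c$ over $\mathbb{F}_p$ (equivalently $|SO_2(\mathbb{F}_p)|=p-\legendre{-1}{p}$ together with the number of isotropic directions), the field analogue of the computation in Lemma \ref{lemma4.2.5}, and it is exactly this count that produces the dichotomy in $p\bmod 4$.
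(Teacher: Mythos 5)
The paper never proves this lemma: it is quoted from \cite{Hanson2016distinct} purely as background motivating Conjecture \ref{key_conj}, so there is no in-paper argument to compare yours against; the closest material is the $\Z_q$ toolkit of Appendix C (Lemmas C.1--C.3), which is essentially the toolkit you deploy. On the generic configurations your proof is correct and the counts are right: a product of two reflections is a direct isometry whose linear part must carry $x_1-x_2$ to $y_1-y_2$; a rotation with nontrivial linear part has a unique fixed point $p_0$, and its factorizations into two reflections biject with reflections whose axis passes through $p_0$, of which there are $(p+1)-\bigl(1+\legendre{-1}{p}\bigr)=p-\legendre{-1}{p}$; a translation by an anisotropic $v$ is realized by exactly $p$ parallel-axis pairs and by none when $v\neq0$ is isotropic.

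The genuine gap is that the verifications you postpone to the final paragraph cannot in fact be carried out: they are exactly where the statement, as transcribed in this paper, is false, because the transcription omits nondegeneracy hypotheses. Two concrete failures. (i) Take $p\equiv1\pmod4$, $i^2=-1$, $x_2=y_2=(0,0)$, $x_1=(1,i)$, $y_1=(1,-i)$. Then $\|x_1-x_2\|=\|y_1-y_2\|=0$ and $x_1-x_2\neq y_1-y_2$, so the first case asserts $\mathcal{N}(x,y)=p-1$; but no rotation matrix sends $(1,i)$ to $(1,-i)$ (in your split model no unit $(\lambda,\lambda^{-1})$ maps $(0,2)$ to $(2,0)$, the two differences lying on \emph{different} isotropic lines) and no translation does either, so $\mathcal{N}(x,y)=0$. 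Thus in Case 1 it is not uniqueness of $\omega$ that is at stake but existence, and it genuinely fails. (ii) If $x=y$ with $x_1\neq x_2$, the fourth case asserts $\mathcal{N}=0$, yet every pair $(\mathcal{R},\mathcal{R})$ works and nothing else does, so $\mathcal{N}(x,x)=p\bigl(p-\legendre{-1}{p}\bigr)$; the paper's own Conjecture \ref{key_conj} implicitly concedes the analogous fact in $\Z_q$ through the entry $\mathcal{A}_x(p^6-p^5)=1$, which is precisely the diagonal $y=x$. Your argument is therefore complete exactly on the domain where the lemma is actually applied (Proposition 12 of \cite{Hanson2016distinct}, Proposition \ref{proposition4.2.14} here): common distance $d\neq0$, which makes both differences anisotropic so that your $\omega$ exists, is unique, and is nontrivial, together with $x\neq y$. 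The correct repair is to state these hypotheses up front and prove the lemma under them, rather than to hope the isotropic and diagonal configurations can be checked; they cannot.
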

However, in $\Z_q^2$, we have no criterion for the $\mathcal{N}(x,y)$ value. This problem also explicitly shows how the structure of $\Z_q^2$ differs from the structure of $\F_p^2$. Fortunately, with the help of a computer program, we directly calculated the distribution of $\mathcal{N}(x,y)$ for small $p$ and made the conjecture as follows.
\begin{conjecture}\label{key_conj}
Given any point pairs $x=(x_1,x_2)\in\Z_q^2\times\Z_q^2$ and $y=(y_1,y_2)\in\Z_q^2\times\Z_q^2$, define $\mathcal{N}(x,y)$ to be the number of pairs $(\mathcal{R}_1,\mathcal{R}_2)$ of reflections with $y_i=\mathcal{R}_2\circ\mathcal{R}_1(x_i)$ for $i=1,2$ and let
\[\mathcal{A}_x(n):=\left|\{y\mid\mathcal{N}(x,y)=n\}\right|.\]
Then, we have
\[\begin{array}{lll}
\mathcal{A}_x(p^3-3p^2)=p^9-p^8,&\mathcal{A}_x(p^3-p^2)=p^8,&\mathcal{A}_x(p^3)=p^8-2p^7+p^6,\\
\mathcal{A}_x(p^4-p^3)=p^6-p^5,&\mathcal{A}_x(p^4)=p^5-2p^4+p^3,&\mathcal{A}_x(p^5-p^4)=p^3-p^2,\\
\mathcal{A}_x(p^5)=p^2-2p+1,&\mathcal{A}_x(p^6-p^5)=1,&\mathcal{A}_x(n)=0
\end{array}\]
for all $n$ not mentioned above.
\end{conjecture}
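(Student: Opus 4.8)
The plan is to compute $\mathcal{N}(x,y)$ exactly for every $y$ by turning the reflection-pair count into a factorization problem for affine isometries, and then to count, for each achievable value $n$, how many $y$ realize it. First I would record the basic reduction: if $\mathcal{R}_i(v)=S_iv+c_i$ are non-isotropic reflections, then $\mathcal{R}_2\circ\mathcal{R}_1$ has linear part $R=S_2S_1$, which is a rotation matrix, and affine part $t=S_2c_1+c_2$. Since a reflection matrix is an involution, for each reflection matrix $S_1$ the partner $S_2=RS_1$ is forced and is automatically a reflection matrix; thus the factorizations of a fixed affine rotation $v\mapsto Rv+t$ are parametrized by the choice of axis of $\mathcal{R}_1$ (equivalently, of $S_1$ together with an admissible offset $c_1\in\mathrm{Im}(I-S_1)$) subject to $t-S_2c_1\in\mathrm{Im}(I-S_2)$. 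Imposing $y_i=\mathcal{R}_2\mathcal{R}_1(x_i)$ forces $R(x_1-x_2)=y_1-y_2$ and $t=y_1-Rx_1$, so that $\mathcal{N}(x,y)=\sum_{R}\#\{\text{admissible }(S_1,c_1)\}$, the outer sum running over rotations $R$ with $R(x_1-x_2)=y_1-y_2$.

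Next I would analyze the two ingredients separately. For the rotation count, note that $SO_2(\Z_q)$ preserves $\|\cdot\|$, so $\mathcal{N}(x,y)=0$ unless $\|y_1-y_2\|=\|x_1-x_2\|$; I would restrict to the relevant generic situation $\|x_1-x_2\|\in\Z_q^*$, where, for such an anisotropic pair, the homogeneity of the isometry group makes $\mathcal{A}_x$ independent of the particular $x$. Using Lemma \ref{lemma4.2.8} ($|C_\rho(u)|=p^3+p^2$ for $\rho\in\Z_q^*$) together with $|SO_2(\Z_q)|=p^3+p^2$ and the fact that the stabilizer of a unit-norm vector is trivial (the relevant $2\times 2$ matrix has unit determinant $\|x_1-x_2\|$), I would show that $SO_2(\Z_q)$ acts simply transitively on $C_{\rho_0}(0)$, so there is exactly one admissible $R$ once $y_1-y_2\in C_{\rho_0}(0)$. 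The finer behaviour is governed by the congruence filtration $SO_2(\Z_q)\twoheadrightarrow SO_2(\F_p)$, whose kernels (rotations $\equiv I \bmod p$, resp. $\bmod\ p^2$) have sizes $p^2$ and $p$; the $p$-adic valuation of the rotation angle, i.e.\ whether $R\equiv I\bmod p,\ p^2$, or $R=I$ (the pure-translation case), is the invariant separating the rows of the table.

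For the factorization count I would use Lemma \ref{lemma4.2.7}: a generic affine rotation factors through a pair of non-isotropic axes meeting at its centre, and the number of non-isotropic lines through a point is $p^3-p^2$; discarding the choices whose forced partner axis is isotropic accounts for the generic value $n=p^3-3p^2$, while the degenerate rotations ($R\equiv I$) enlarge the admissible offset space and push $n$ up to $p^4$ and $p^5$, with the pure translation and the identity $y=x$ giving the extreme rows $n=p^5$ and $n=p^6-p^5$. Crucially, the empty circles of Lemma \ref{lemma4.2.8} ($|C_\rho|=0$ for $\rho\in p\Z_q^*$) produce the affine obstruction that forces $\mathcal{N}(x,y)=0$ for certain $y$ with $y_1-y_2\in C_{\rho_0}(0)$: these are exactly the displacements $y_1-x_1$ of ``isotropic'' norm, the $\Z_q$-analogue of the last case of Lemma 7 in \cite{Hanson2016distinct}. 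I would then count the $y$ in each stratum using the displacement distribution $D_{q,\rho}$ and the circle sizes from Lemma \ref{lemma4.2.8}, assembling each $\mathcal{A}_x(n)$.

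The main obstacle will be the exhaustive and exact treatment of the intermediate and degenerate strata. Getting the leading order of each $\mathcal{A}_x(n)$ is routine, but the conjecture asserts the precise polynomials (with the corrections $-3p^2$, $p^8-2p^7+p^6$, $(p-1)^2$, and so on), so every lower-order term must be tracked: this means computing the exact number of admissible offsets $c_1$ as $\mathrm{Im}(I-S_1)$ and $\mathrm{Im}(I-S_2)$ vary with the valuation of $R-I$, counting non-isotropic axes through a point whose coordinates are themselves divisible by $p$, and applying Hensel's lemma (via Lemma \ref{lemma4.2.5}) to lift the $\F_p$ reflection-pair counts to $\Z_q$ without collision. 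The consistency checks $\sum_n \mathcal{A}_x(n)\le q^4$ and $\sum_n n\,\mathcal{A}_x(n)=(p^6-p^5)^2$ --- the square of the number of non-isotropic reflections $p^2(p-1)\cdot p^3$ --- give a strong global constraint that I would use to detect bookkeeping errors and to confirm that the strata partition $\Z_q^2\times\Z_q^2$ exactly as the table demands.
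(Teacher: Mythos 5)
First, a point of calibration: the paper does not prove this statement at all --- it is Conjecture \ref{key_conj}, formulated from direct computer calculation of the distribution of $\mathcal{N}(x,y)$ for small $p$, and everything downstream (Proposition \ref{proposition4.2.14}, Theorem \ref{main}) is explicitly conditional on it. So there is no paper proof to compare against; your proposal is an original attack, and its skeleton is sound. The reduction of $\mathcal{N}(x,y)$ to counting factorizations of a single affine rotation (unique once $\|x_1-x_2\|\in\Z_q^*$, by the simple transitivity in Lemma C.2), the stratification by the $p$-adic valuation of $R-I$, and the axis-isotropy correction (the $2p^2$ isotropic directions among the $p^3+p^2$ total explain the $-3p^2$ in the generic row) all point in the right direction. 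Your checksum is genuinely valuable: the table does sum to $\sum_n n\,\mathcal{A}_x(n)=(p^6-p^5)^2$, and you correctly inferred from this two hypotheses the statement leaves silent --- that $\mathcal{N}$ must count only reflections with non-isotropic axes (counting all affine reflections would force $(p^6+p^5)^2$), and that $x$ must satisfy $\|x_1-x_2\|\in\Z_q^*$ (for $x_1=x_2$ at most $p^6$ points $y$ can have $\mathcal{N}(x,y)>0$, contradicting $\mathcal{A}_x(p^3-3p^2)=p^9-p^8$; this matches how the conjecture is actually invoked, on the vertex set of Proposition \ref{proposition4.2.14}).

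That said, what you have is a programme, not a proof, and you say so yourself: the ``exhaustive and exact treatment of the intermediate and degenerate strata'' that you defer is precisely where all eight polynomial identities live. Nothing in the outline yet determines, for instance, why the stratum $n=p^3$ has size exactly $p^6(p-1)^2$ rather than the naive count suggested by rotations with $R\equiv I\bmod p^2$; the strata do not align cleanly with the congruence filtration of $SO_2(\Z_q)$, and the interaction between the valuation of $R-I$, the translation part $t$, and the image modules $\mathrm{Im}(I-S_i)$ must be worked out case by case. A second concrete soft spot is your homogeneity claim that $\mathcal{A}_x$ is independent of $x$. Within a fixed circle $C_d(0)$ this follows from Lemma C.2, but across distinct unit norms $d,d'\in\Z_q^*$ your available symmetries are homotheties, which only reach the square class of $d$, and complex multiplications by $w$ with $\|w\|=d'/d$; when $d'/d$ is a non-residue both coordinates of $w$ are forced to be units, and then multiplication by $w$ sends some non-isotropic directions (those $(1,a)$ with $a\equiv w_1/w_2 \bmod p$) to isotropic ones, so it does not biject the reflection pairs being counted. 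Hence independence in $d$ --- which the conjecture tacitly asserts --- needs its own argument. Until the stratified counts are executed and this invariance established, the statement remains at the evidential level the paper already gives it; completing your plan would upgrade the conjecture to a theorem and make Theorem \ref{main} unconditional, which is exactly the gap the paper acknowledges.
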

Assume that the above conjecture is true. Now, we introduce two well-known theorems in graph theory and linear algebra that help us to finish the proof of our main result.
\begin{theorem}[Expander Mixing Lemma, \cite{Hanson2016distinct}]\label{EML}
Let $G=(V,E)$ be a $\delta$-regular graph with $|V|=n$, and let $A$ be the adjacency matrix for $G$. Suppose that the absolute values of all but the largest eigenvalue of $A$ are bounded by $\lambda$. Suppose that $f,g\in L^2(V)$. Then, we have
\[|\langle f,Ag\rangle-\delta n\mathbb{E}(f)\mathbb{E}(g)|\leq\lambda\|f\|\|g\|.\]
Especially, for any $S,T\subseteq V$,
\[\left|E(S,T)-\frac{\delta|S||T|}{n}\right|\leq\lambda\sqrt{|S||T|}\]
where $E(S,T)$ is the number of edges between $S$ and $T$.
\end{theorem}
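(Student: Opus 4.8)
The plan is to exploit the spectral decomposition of the adjacency matrix, which is available because $G$ is undirected and hence $A$ is a real symmetric matrix. First I would invoke the spectral theorem to fix an orthonormal eigenbasis $v_1,\dots,v_n$ of $A$ with real eigenvalues $\lambda_1,\dots,\lambda_n$, ordered so that $\lambda_1$ is the largest. Since $G$ is $\delta$-regular, every row of $A$ sums to $\delta$, so the all-ones vector is an eigenvector with eigenvalue $\delta$; after normalizing we take $v_1=\frac{1}{\sqrt n}\mathbf 1$ and $\lambda_1=\delta$. Because $A$ has nonnegative entries with row sums $\delta$, its spectral radius is at most $\delta$, so $\delta$ is genuinely the largest eigenvalue, and the hypothesis then guarantees $|\lambda_i|\le\lambda$ for every $i\ge2$.

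Next I would expand the test functions in this basis, writing $f=\sum_i a_i v_i$ and $g=\sum_i b_i v_i$ with $a_i=\langle f,v_i\rangle$ and $b_i=\langle g,v_i\rangle$. Applying $A$ diagonalizes the bilinear form, giving $\langle f,Ag\rangle=\sum_i\lambda_i a_i b_i$. The one computation that really matters is the leading coefficient: since $a_1=\frac{1}{\sqrt n}\sum_{v\in V}f(v)=\sqrt n\,\mathbb E(f)$ and likewise $b_1=\sqrt n\,\mathbb E(g)$, the top term equals $\lambda_1 a_1 b_1=\delta n\,\mathbb E(f)\mathbb E(g)$, which is exactly the main term subtracted off in the statement. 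Hence $\langle f,Ag\rangle-\delta n\,\mathbb E(f)\mathbb E(g)=\sum_{i\ge2}\lambda_i a_i b_i$.

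To finish the first inequality I would bound this tail, using $|\lambda_i|\le\lambda$ and then Cauchy--Schwarz:
\[
\Bigl|\sum_{i\ge2}\lambda_i a_i b_i\Bigr|\le\lambda\sum_{i\ge2}|a_i b_i|\le\lambda\Bigl(\sum_{i\ge2}a_i^2\Bigr)^{1/2}\Bigl(\sum_{i\ge2}b_i^2\Bigr)^{1/2}\le\lambda\|f\|\,\|g\|,
\]
where the final step drops the nonnegative $i=1$ contribution from the Parseval identities $\sum_i a_i^2=\|f\|^2$ and $\sum_i b_i^2=\|g\|^2$. For the edge-counting consequence I would specialize to indicator functions $f=\mathbf 1_S$ and $g=\mathbf 1_T$: then $\langle f,Ag\rangle=\sum_{u\in S,\,v\in T}A_{uv}=E(S,T)$, together with $\mathbb E(f)=|S|/n$, $\mathbb E(g)=|T|/n$, $\|f\|=\sqrt{|S|}$, and $\|g\|=\sqrt{|T|}$. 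Substituting these into the general bound yields precisely $\bigl|E(S,T)-\delta|S||T|/n\bigr|\le\lambda\sqrt{|S||T|}$.

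There is no serious obstacle here; the argument is entirely linear-algebraic once symmetry of $A$ is in hand. The only point demanding genuine care is the bookkeeping of normalizations --- verifying that the top eigenvector is $\frac{1}{\sqrt n}\mathbf 1$ with eigenvalue exactly $\delta$ (this is where $\delta$-regularity enters, and where one checks that $\delta$ is the largest eigenvalue so the separation hypothesis applies to all remaining ones), and matching the factor $\delta n\,\mathbb E(f)\mathbb E(g)$ correctly against the isolated $i=1$ term.
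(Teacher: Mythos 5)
Your proof is correct and complete: the spectral decomposition, the identification of the top eigenvector $\frac{1}{\sqrt n}\mathbf 1$ with eigenvalue $\delta$ (including the check via nonnegativity and row sums that $\delta$ really is the largest eigenvalue, so the hypothesis covers all $i\ge 2$), the Cauchy--Schwarz bound on the tail, and the specialization to indicator functions are exactly the standard argument. The paper itself states this lemma without proof, citing Hanson--Lund--Roche-Newton, so there is nothing to compare against; your write-up supplies the expected textbook proof and would serve as a valid self-contained justification.
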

\begin{theorem}[Gershgorin Circle Theorem, \cite{Brualdi1994regions}]\label{GCT}
Let $A=[A_{ij}]$ be an $n\times n$ matrix, and let $r_i=\sum_{j=1}^n|a_{ij}|$ be the sum of the absolute values of the $i$-th row of $A$. Then, for any eigenvalue $\lambda$ of $A$, there is a $1\leq i\leq n$ such that $|\lambda-a_{ii}|\leq r_i$.
\end{theorem}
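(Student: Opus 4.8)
The plan is to read the inequality straight off an eigenvector, using the single observation that an eigenvector has a coordinate of maximal modulus. First I would fix an eigenvalue $\lambda$ of $A$ together with a nonzero eigenvector $x=(x_1,\dotsc,x_n)$ satisfying $Ax=\lambda x$. Then I would choose an index $i$ with $|x_i|=\max_{1\leq j\leq n}|x_j|$; since $x\neq 0$ this guarantees $|x_i|>0$, and this choice is the crux of the whole argument, because it simultaneously lets us divide by $|x_i|$ at the end and dominate every other coordinate $|x_j|$ by $|x_i|$.

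Next I would extract the $i$-th coordinate of the relation $Ax=\lambda x$, namely $\sum_{j=1}^n a_{ij}x_j=\lambda x_i$, and isolate the diagonal term to obtain
\[(\lambda-a_{ii})x_i=\sum_{j\neq i}a_{ij}x_j.\]
Taking absolute values, applying the triangle inequality, and then invoking the maximality $|x_j|\leq|x_i|$ gives
\[|\lambda-a_{ii}|\,|x_i|\leq\sum_{j\neq i}|a_{ij}|\,|x_j|\leq|x_i|\sum_{j\neq i}|a_{ij}|.\]
Dividing through by $|x_i|>0$ yields $|\lambda-a_{ii}|\leq\sum_{j\neq i}|a_{ij}|\leq\sum_{j=1}^n|a_{ij}|=r_i$, which is exactly the claimed bound for this particular $i$.

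There is no genuinely hard step: the entire content of the proof is the pigeonhole-style selection of the coordinate of largest modulus, which is what makes both the division and the factoring of the off-diagonal sum legitimate. I would only remark in passing that the stated radius $r_i=\sum_{j=1}^n|a_{ij}|$ includes the diagonal entry and is therefore slightly larger than the classical Gershgorin radius $\sum_{j\neq i}|a_{ij}|$; the argument above actually establishes the sharper inequality $|\lambda-a_{ii}|\leq\sum_{j\neq i}|a_{ij}|$, and the weaker stated form follows at once since $\sum_{j\neq i}|a_{ij}|\leq r_i$.
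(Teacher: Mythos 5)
Your proof is correct: it is the classical maximal-coordinate argument for the Gershgorin Circle Theorem, and every step (the choice of the index $i$ with $|x_i|$ maximal, the row extraction, the triangle inequality, and the division by $|x_i|>0$) is valid. The paper itself offers no proof of this statement, citing \cite{Brualdi1994regions} instead, so there is nothing to compare against; your closing remark is also accurate, namely that your argument yields the sharper classical bound $|\lambda-a_{ii}|\leq\sum_{j\neq i}|a_{ij}|$, of which the paper's stated radius $r_i=\sum_{j=1}^n|a_{ij}|$ is a trivial weakening.
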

For simplicity, we declare some notations used in the later context:
\begin{definition}[$\widetilde{P},Q',Q_d',\Pi_d'$]
Given a point set $P\subseteq\Z_q^2$ and $d\in\Z_q^*$, define
\begin{equation*}
\begin{split}
\widetilde{P}&:=\{(x,y)\in P^2\mid x-y=(a_1,a_2)\text{ with }a_1,a_2\in\Z_q^*\},\\
Q'=Q'(P)&:=\{(x,y,z,w)\in\widetilde{P}^2\mid B(x,z)=B(y,w),B(x,z)\text{ is non-isotropic}\},\\
Q_d'=Q_d'(P)&:=\{(x,y,z,w)\in Q'\mid\|x-y\|=\|z-w\|=d\},\\
\Pi_d'=\Pi_d'(P)&:=\{(x,y)\in\widetilde{P}\mid\|x-y\|=d\}.
\end{split}
\end{equation*}
\end{definition}
With this notation and the assumption of Conjecture \ref{key_conj}, we can prove the following proposition, which is the most important part of the proof of the main theorem. A similar result for the $\F_p^2$ case can be found in Proposition 12 in \cite{Hanson2016distinct}.
\begin{proposition}\label{proposition4.2.14}
If Conjecture \ref{key_conj} is correct, for any $d\in\Z_q^*$, we have
\[|Q_d'|\ll\frac{{|\Pi_d'|}^2}{p^3}+p^5|\Pi_d'|.\]
\end{proposition}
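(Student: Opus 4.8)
The plan is to recast $|Q_d'|$ as an edge count inside a ``reflection graph'' and then apply the Expander Mixing Lemma (Theorem \ref{EML}), extracting the main term $|\Pi_d'|^2/p^3$ from the edge density and the error term $p^5|\Pi_d'|$ from a spectral gap. Concretely, I would take as vertex set the ambient analogue of $\Pi_d'$,
\[V:=\{(x,y)\in(\Z_q^2)^2\mid \|x-y\|=d,\ x-y\in(\Z_q^*)^2\},\]
and join $\mathbf{p}=(p_1,p_2)$ to $\mathbf{r}$ in $V$ whenever some reflection $\mathcal{S}$ with non-isotropic axis satisfies $(\mathcal{S}(p_1),\mathcal{S}(p_2))=\mathbf{r}$. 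The first step is the bookkeeping identity $|Q_d'|=E(\Pi_d',\Pi_d')$: given $(x,y,z,w)\in Q_d'$, the reflection across the common bisector $B(x,z)=B(y,w)$ carries $(x,y)$ to $(z,w)$ and is distance-preserving by Lemma \ref{lemma4.2.6}, while conversely each such reflection produces exactly one quadruple. Thus counting quadruples is the same as counting edges of the reflection graph lying inside the subset $\Pi_d'\subseteq V$.

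Next I would pin down the two constants feeding Theorem \ref{EML}. Using Lemma \ref{lemma4.2.8} to count points on a ``sphere'' $\|x-y\|=d$ and Lemma \ref{lemma4.2.7} to count non-isotropic reflections, I would show $n:=|V|\approx p^9$ and that the graph is (essentially) $\delta$-regular with $\delta\approx p^6$, so the edge density satisfies $\delta/n\approx p^{-3}$; this is exactly what produces the main term $\delta|\Pi_d'|^2/n\approx|\Pi_d'|^2/p^3$.

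The crux is the spectral gap: I must bound every eigenvalue of the adjacency matrix $A$ other than the trivial one by $\lambda\ll p^5$. The key observation is that $(A^2)_{\mathbf{p},\mathbf{p}'}$ counts pairs $(\mathcal{R}_1,\mathcal{R}_2)$ of reflections whose composition carries $\mathbf{p}$ to $\mathbf{p}'$, so the entries of $A^2$ are governed by $\mathcal{N}(\mathbf{p},\mathbf{p}')$ and hence by Conjecture \ref{key_conj}. Since $A$ is $\delta$-regular, $A^2$ has top eigenvalue $\delta^2$ on the constant vector, so I would apply the Gershgorin Circle Theorem (Theorem \ref{GCT}) to $B:=A^2-\tfrac{\delta^2}{n}J$, whose remaining eigenvalues are precisely $\{\lambda_i^2\}_{i\ge2}$. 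The diagonal realizes the unique largest value $p^6-p^5$ of Conjecture \ref{key_conj}, and the off-diagonal deviations from $\delta^2/n\approx p^3$ are read off from the tabulated values and multiplicities; the delicate point is that the three most populous values $p^3,\ p^3-p^2,\ p^3-3p^2$ all cluster around $\delta^2/n$, yielding the cancellation $\sum_{\mathbf{p}'\ne\mathbf{p}}|\mathcal{N}(\mathbf{p},\mathbf{p}')-\delta^2/n|\ll p^{10}$ needed to conclude $\lambda^2\ll p^{10}$. With $\delta/n\approx p^{-3}$ and $\lambda\ll p^5$ in hand, Theorem \ref{EML} applied to $S=T=\Pi_d'$ gives $|Q_d'|=E(\Pi_d',\Pi_d')\ll|\Pi_d'|^2/p^3+p^5|\Pi_d'|$.

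I expect the main obstacle to be exactly this eigenvalue estimate: making the Gershgorin bound honest requires the precise constants, not merely the orders, in Conjecture \ref{key_conj}, so that the deviations of the three $\approx p^3$-sized values from $\delta^2/n$ are genuinely of size $O(p^2)$ rather than $O(p^3)$. A secondary technical nuisance is the arithmetic of $\Z_q$: its zero divisors force care in checking that distinct reflections yield distinct quadruples, that all but negligibly many reflections map $V$ into $V$, that the isotropic reflections excluded from $Q'$ contribute only to lower-order terms, and that the graph is regular enough for Theorem \ref{EML} to apply.
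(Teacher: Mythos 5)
Your proposal is, in substance, the paper's own proof: the paper builds a graph on the pairs at distance $d$, with edges given by a common non-isotropic bisector, observes $(A^2)_{xy}=\mathcal{N}(x,y)$ so that Conjecture \ref{key_conj} controls $A^2$, extracts the spectral gap $\lambda\ll p^5$ by applying Gershgorin (Theorem \ref{GCT}) to $A^2$ minus suitable multiples of $J$ and $I$ with row sums $\Theta(p^{10})$, and finishes with the Expander Mixing Lemma on $S=T=\Pi_d'$. Your decomposition $B=A^2-\tfrac{\delta^2}{n}J$ versus the paper's $A^2=(p^3-3p^2)J+(p^6-p^5-p^3+3p^2)I+E$ is an immaterial bookkeeping difference: your diagonal entries are $\approx p^6$ rather than $0$, which still yields $\lambda^2\ll p^6+p^{10}\ll p^{10}$. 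The one correction you should make is to drop the restriction $x-y\in(\Z_q^*)^2$ from your vertex set and take, as the paper does, $V=\{(x_1,x_2)\in\Z_q^2\times\Z_q^2\mid\|x_1-x_2\|=d\}$, so $|V|=p^9+p^8$. With your restricted $V$ the graph is not exactly regular (a reflection need not preserve the condition that the coordinates of $x-y$ are units), so Theorem \ref{EML} as stated does not apply; moreover the multiplicities in Conjecture \ref{key_conj} sum to roughly $p^9+p^8$, i.e.\ they are calibrated to the full set of pairs at distance $d$, so over your restricted vertex set they would not directly give the Gershgorin row sums you need. Since $\Pi_d'$ sits inside the unrestricted $V$ in any case, enlarging the vertex set costs nothing and removes both difficulties; with that change your argument is exactly the paper's.
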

\begin{proof}
Let $G=(V,E)$ be a graph with 
\[V:=\{x=(x_1,x_2)\in\Z_q^2\times\Z_q^2\mid\|x_1-x_2\|=d\}\Rightarrow|V|=p^9+p^8\]
and 
\begin{equation*}
\begin{split}
E:=\{\{x,y\}\mid&\hspace{0.5em}x=(x_1,x_2),y=(y_1,y_2)\\
&\hspace{1em}\text{ with }B(x_1,y_1)=B(x_2,y_2)\text{ is non-isotropic}\}.
\end{split}
\end{equation*}
For a vertex $x\in V$, define $\Gamma(x)$ to be the neighborhood of $x$ and notice that for any $x$, $|\Gamma(x)|$ is equal to the number of non-isotropic lines. That is, $G$ is a $(p^6-p^5)$-regular graph. Additionally, notice that
\[|\Gamma(x)\cap\Gamma(y)|=|\{z\in\Gamma(x)\mid y\in\Gamma(z)\}|=\mathcal{N}(x,y).\] 
Let $A$ be the adjacency matrix of $G$. Then, the $(x,y)$-th entry of $A^2$
\[{(A^2)}_{xy}=|\Gamma(x)\cap\Gamma(y)|=\mathcal{N}(x,y).\]
By Conjecture \ref{key_conj}, $A^2$ is a regular matrix. That is, for every row of $A^2$, the sum of the absolute value of the entries in that row is the same. Let $J$ be the all-1s matrix, $I$ be the identity matrix, and $E$ be the error matrix with 
\[A^2=(p^3-3p^2)J+(p^6-p^5-p^3+3p^2)I+E.\]
For the $x$-th row of $E$, the absolute row sum is 
\begin{equation*}
\begin{split}
\sum_{y\in V}|E_{xy}|=&\left|{(A^2)}_{xy}-(p^3-3p^2)-(p^6-p^5-p^3+3p^2)\delta_{x,y}\right|\\
=&2p^2\times p^8+3p^2\times(p^8-2p^7+p^6)\\
&+(p^4-2p^3+3p^2)\times(p^6-p^5)\\
&+(p^4-p^3+3p^2)\times(p^5-2p^4+p^3)\\
&+(p^5-p^4-p^3+3p^2)\times(p^3-p^2)\\
&+(p^5-p^3+3p^2)\times(p^2-2p+1)\\
&+(p^3-3p^2)\times(2p^7-2p^6+2p^4-2p^3+2p-2)\\
=&\Theta(p^{10}).
\end{split}
\end{equation*}
Additionally, every diagonal entry of $E$ vanishes, so for any eigenvalue $\lambda_E$ of $E$, we have 
\begin{equation}\label{eq32}
|\lambda_E|\ll p^{10}
\end{equation}
by Theorem \ref{GCT}.
\par 
Since $A$ is a regular matrix, the all-1s vector is an eigenvector of $A$. Suppose that $v$ is an eigenvector of $A$, which is orthogonal to the all-1s vector and $Av=\lambda v$. Then, 
\[Ev=(A^2-(p^3-3p^2)J-(p^6-p^5-p^3+3p^2)I)v=(\lambda^2-p^6+p^5+p^3-3p^2)v.\]
In other words, $v$ is an eigenvector of $E$ with eigenvalue $\lambda^2-p^6+p^5+p^3-3p^2$. By Equation \ref{eq32}, we know that 
\[|\lambda^2-p^6+p^5+p^3-3p^2|\ll p^{10}\Rightarrow\lambda\ll p^5.\]
\par
Finally, we can use Theorem \ref{EML}. Taking $S=T=\Pi_d'$, we have
\[E(\Pi_d',\Pi_d')\ll\frac{(p^6-p^5){|\Pi_d'|}^2}{p^9+p^8}+p^5|\Pi_d'|\ll\frac{{|\Pi_d'|}^2}{p^3}+p^5|\Pi_d'|.\]
In the end, by definition, we have $|Q_d'|=E(\Pi_d',\Pi_d')$, so we finish the proof.
\end{proof}
Next, the following lemma also demonstrates another difference between $\Z_q^2$ and $\mathbb{F}_p^2$. After the proof of Lemma \ref{lemma4.2.15}, we will discuss the difference in the remark in detail. 
\begin{lemma}\label{lemma4.2.15}
For any point set $\mathcal{P}\subseteq\Z_q^2$, we have
\[\sum_{d\in\Z_q^*}{|\Pi_d'(\mathcal{P})|}^2\ll\frac{{|\mathcal{P}|}^4}{q}+q^3{|\mathcal{P}|}^2.\]
\end{lemma}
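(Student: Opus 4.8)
```latex
The plan is to bound the sum $\sum_{d\in\Z_q^*}|\Pi_d'(\mathcal{P})|^2$ by interpreting it as a counting problem for quadruples. Recall that $|\Pi_d'(\mathcal{P})|$ counts the ordered pairs $(x,y)\in\widetilde{\mathcal{P}}$ with $\|x-y\|=d$, where $\widetilde{\mathcal{P}}$ restricts to difference vectors with both coordinates being units. Squaring and summing over $d\in\Z_q^*$, the quantity $\sum_{d\in\Z_q^*}|\Pi_d'(\mathcal{P})|^2$ equals the number of quadruples $(x,y,z,w)\in\widetilde{\mathcal{P}}^2$ satisfying $\|x-y\|=\|z-w\|\in\Z_q^*$. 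The first step is therefore to rewrite the target sum as this quadruple count and then split it according to whether the common ``distance'' $d$ is a unit (as forced) — the relevant structure is the additive energy-type statistic of the distance function.

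First I would pass to difference variables. Writing $u=x-y$ and $v=z-w$, the constraint becomes $\|u\|=\|v\|=d$ with $d\in\Z_q^*$, where $u,v$ range over difference vectors of pairs of points from $\mathcal{P}$ lying in $\widetilde{\mathcal{P}}$. Let $r(u):=|\{(x,y)\in\widetilde{\mathcal{P}}\mid x-y=u\}|$ denote the representation function; then the sum equals $\sum_{d\in\Z_q^*}\bigl(\sum_{\|u\|=d}r(u)\bigr)^2$. Expanding and applying Cauchy--Schwarz (or directly bounding), the key is to control, for each level set $\|u\|=d$, how many difference vectors $u$ can share the same value $d$. This is exactly where Lemma \ref{lemma4.2.8} enters: the count $D_{q,\rho}$ of pairs $(u_1,u_2)\in\Z_q^2\times\Z_q^2$ with $\|u_1-u_2\|\equiv\rho$ governs how the ``distance circles'' $C_\rho(u)$ are populated, and for $\rho\in\Z_q^*$ we have $|C_\rho(u)|=p^3+p^2=\Theta(q)$ points per circle. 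Thus each nonzero distance value is realized by $\Theta(q)$ vectors, giving the uniformity we need.

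The main decomposition is to separate the ``diagonal'' contribution (where the two pairs produce the same difference vector, $u=v$) from the ``off-diagonal'' contribution. For the diagonal part, the count is at most $\sum_u r(u)^2 \le |\mathcal{P}|\cdot\max_u r(u)$ times an appropriate factor, but more usefully one bounds it by the total number of collinear-type configurations, yielding the $q^3|\mathcal{P}|^2$ term — the factor $q^3=p^9$ reflecting the circle sizes from Lemma \ref{lemma4.2.8} together with the number of points. For the off-diagonal part, where $u\neq v$ but $\|u\|=\|v\|$, I would use the uniform circle-size bound $|C_d(\cdot)|=\Theta(q)$ to show that fixing the distance $d$ constrains the pair $(u,v)$ to a set of size $\Theta(q^2)$, and since there are roughly $|\mathcal{P}|^2/q$ choices of distance weighted by representation functions, the total off-diagonal contribution is $O(|\mathcal{P}|^4/q)$. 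Combining the two regimes yields the claimed bound $\ll |\mathcal{P}|^4/q + q^3|\mathcal{P}|^2$.

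The hard part will be bookkeeping the level-set sizes precisely enough through Lemma \ref{lemma4.2.8} so that the two error terms come out with exactly the exponents $q^{-1}=p^{-3}$ and $q^3=p^9$, rather than losing powers of $p$ in the off-diagonal estimate. In particular, I expect the delicate step to be controlling vectors $u$ for which $\|u\|$ lands in $p\Z_q^*$ or $p^2\Z_q^*$ (the degenerate distance classes in Lemma \ref{lemma4.2.8}): because the summation is restricted to $d\in\Z_q^*$, such degenerate $u$ should contribute to neither side, but one must verify that $\widetilde{\mathcal{P}}$ — whose difference vectors have unit coordinates — does not force extra isotropic cancellations that inflate the count. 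Once the representation function $r(u)$ is controlled uniformly and the diagonal/off-diagonal split is carried out against the exact $D_{q,\rho}$ values, the rest is a routine summation.
```
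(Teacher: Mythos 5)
Your reduction of the sum to counting quadruples $(x,y,z,w)\in\widetilde{\mathcal{P}}^2$ with $\|x-y\|=\|z-w\|\in\Z_q^*$ is correct, and the diagonal bound is harmless, but the off-diagonal estimate is a genuine error, not just a gap. You claim that because there are ``roughly $|\mathcal{P}|^2/q$ choices of distance weighted by representation functions,'' the off-diagonal contribution is $O(|\mathcal{P}|^4/q)$. This silently assumes the distances determined by $\mathcal{P}$ are equidistributed over $\Z_q^*$, i.e.\ that $a_d:=\sum_{\|u\|=d}r(u)\ll|\mathcal{P}|^2/q$ pointwise in $d$; that is false for arbitrary $\mathcal{P}$, and the inequality in fact runs the other way: by Cauchy--Schwarz, if the $a_d$ are supported on $\lesssim q$ values and $\sum_d a_d\approx|\mathcal{P}|^2$, then $\sum_d a_d^2\geq|\mathcal{P}|^4/q$, so $|\mathcal{P}|^4/q$ is the free \emph{lower} bound, never an upper bound. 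Concretely, take $\mathcal{P}=\{(i,2i)\mid 1\leq i\leq n\}$ with $n<p$ and $p>5$. Every nonzero difference $(k,2k)$ has both coordinates units and norm $5k^2\in\Z_q^*$; the distance $5k^2$ is attained by exactly $2(n-k)$ ordered pairs, and these distances are pairwise distinct mod $q$. The off-diagonal count is then $\sum_{k}\Theta\bigl((n-k)^2\bigr)=\Theta(n^3)$, which is $\gg n^4/q$ whenever $n\ll q$. So the off-diagonal part simply cannot be charged to the $|\mathcal{P}|^4/q$ term; no counting refinement will put it there.

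The lemma itself is still easy to establish, because the entire quadruple count (diagonal and off-diagonal alike) fits under the \emph{other} term: fixing $(x,y)\in\widetilde{\mathcal{P}}$ (at most $|\mathcal{P}|^2$ choices, which determines $d$) and $z\in\mathcal{P}$, the fourth point $w$ must lie in $\mathcal{P}\cap C_d(z)$, which by Lemma \ref{lemma4.2.8} has size $\ll q$ since $d\in\Z_q^*$; hence the total is $\ll|\mathcal{P}|^3\min\{|\mathcal{P}|,q\}\leq q^3|\mathcal{P}|^2$, using $|\mathcal{P}|\leq q^2$. This is in essence what the paper's proof does, dressed in graph language: for each fixed $d$ it applies the Expander Mixing Lemma to a $(p^3+p^2)^2$-regular graph on $\Z_q^2\times\Z_q^2$, but with only the trivial Gershgorin bound $\lambda\leq(p^3+p^2)^2$ on the eigenvalues, so the $|\mathcal{P}|^4/q$ term appears merely as the harmless main term of the mixing lemma (as the paper's own remark concedes), not as anything extracted from off-diagonal structure. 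If you reorganize your write-up so that both contributions are absorbed into $q^3|\mathcal{P}|^2$ via the circle-size bound, your counting approach goes through; as written, the claimed split is incorrect.
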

\begin{proof}
For a $d\in\Z_q^*$, consider a graph $G_d=(V_d,E_d)$ with 
\[V_d=\Z_q^2\times\Z_q^2\text{ and }E_d=\{\{(x_1,x_2),(y_1,y_2)\}\subset V\mid\|x_1-y_1\|=\|x_2-y_2\|=d\}.\]
Notice that by definition, we have ${|\Pi_d'|}^2=E(\mathcal{P}\times\mathcal{P},\mathcal{P}\times\mathcal{P})$.
Now, we want to use Theorem \ref{EML} to obtain an upper bound of $E(\mathcal{P}\times\mathcal{P},\mathcal{P}\times\mathcal{P})$. First, note that $|V|=q^4$ and $G_d$ is a ${|C_d(0)|}^2={(p^3+p^2)}^2$-regular graph by Lemma \ref{lemma4.2.8}. Let $A$ be the adjacency matrix of $G$. By Theorem \ref{GCT}, every eigenvalue of $A$ is bounded by 
\[\sum_{y\in A}|A_{xy}|={(p^3+p^2)}^2.\]
Combining them together, we obtain that
\begin{equation*}
\begin{split}
&{|\Pi_d'(\mathcal{P})|}^2\ll\frac{{(p^3+p^2)}^2{|\mathcal{P}|}^4}{q^4}+{(p^3+p^2)}^2{|\mathcal{P}|}^2\\
&\Rightarrow\sum_{d\in\Z_q^*}{|\Pi_d'(\mathcal{P})|}^2\ll\frac{{|\mathcal{P}|}^4}{q}+q^3{|\mathcal{P}|}^2.
\end{split}
\end{equation*}
This completes our proof.
\end{proof}
\begin{remark}
It is easy to see that in the above proof, the bound of the second-largest eigenvalue of $A$ is quite bad. ${(p^3+p^2)}^2$ is its largest eigenvalue, which is also the trivial one. However, due to the structure of $G_d$, it is difficult to apply the skill we used in Proposition \ref{proposition4.2.14}. That is, decomposing $A^2$ into the linear combination of $J, I$, and an error matrix $E$ (as Hanson, Lund, and Roche-Newton did in \cite{Hanson2016distinct}) does not help us. For a reason, in short, both the numbers of entries in a given row of $A^2$ equal to 0 and 2, respectively, are about $q^2$. Since they have almost the same amount, we cannot set a proper coefficient for $J$ such that the row sum of the error matrix is properly bounded. Therefore, the technique improves nothing more than coefficients.
\par
As a result, I am looking for another skill to obtain a statement like
\[\sum_{d\in\Z_q^*}{|\Pi_d'(\mathcal{P})|}^2\ll\frac{{|\mathcal{P}|}^4}{q}+q^{3-\epsilon}{|\mathcal{P}|}^2,\forall\,\mathcal{P}\subseteq\Z_q^*\]
for some constant $\epsilon>0$. If such a statement and Conjecture \ref{key_conj} hold, we could get a theorem in the same form as Theorem \ref{theorem4.2.1}:
\begin{quote}
For any $\mathcal{P}\subseteq\Z_q^2$ with $|\mathcal{P}|\gg q^\alpha$, we have $B(\mathcal{P})\gg q^2$ for some non-trivial constant $\alpha$. i.e. $\alpha<2$.
\end{quote}
\end{remark}
Now, with the help of Lemma \ref{lemma4.2.15}, we can prove the following lemma. This lemma is the final step before proving our main goal in this section, Theorem \ref{main}.
\begin{lemma}\label{lemma4.2.16}
If Conjecture \ref{key_conj} is correct, then for any point set $\mathcal{P}\subset\Z_q^2$,
\[|Q'(\mathcal{P})|\ll\frac{{|\mathcal{P}|}^4}{q^2}+q^2{|\mathcal{P}|}^2.\]
\end{lemma}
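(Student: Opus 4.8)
The plan is to decompose $Q'(\mathcal{P})$ according to the common distance value $d=\|x-y\|=\|z-w\|$ and then reduce everything to the two tools already established: the spectral estimate of Proposition \ref{proposition4.2.14} for each fixed $d$, and the second-moment bound of Lemma \ref{lemma4.2.15}. Conjecture \ref{key_conj} enters only through Proposition \ref{proposition4.2.14}, so it need not be invoked again directly.

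First I would verify that only units $d\in\Z_q^*$ can occur, which is what lets the decomposition interface cleanly with the earlier results. For any $(x,y)\in\widetilde{P}$ we have $x-y=(a_1,a_2)$ with $a_1,a_2\in\Z_q^*$, so $\|x-y\|=a_1^2+a_2^2$. Because $p$ is a $4n+3$ prime, $-1$ is a non-residue modulo $p$, so $a_1^2+a_2^2\equiv0\pmod p$ would force $a_1\equiv a_2\equiv0\pmod p$, contradicting that $a_1,a_2$ are units. Hence $\|x-y\|\in\Z_q^*$ for every $(x,y)\in\widetilde{P}$. Combined with Lemma \ref{lemma4.2.6}, which gives $\|x-y\|=\|z-w\|$ whenever the common bisector is non-isotropic, every tuple counted by $Q'$ lies in exactly one $Q_d'$ with $d\in\Z_q^*$, so that $|Q'|=\sum_{d\in\Z_q^*}|Q_d'|$.

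Next I would insert Proposition \ref{proposition4.2.14}, namely $|Q_d'|\ll |\Pi_d'|^2/p^3+p^5|\Pi_d'|$, and sum over $d\in\Z_q^*$. The first term is controlled by Lemma \ref{lemma4.2.15}, which bounds $\sum_d|\Pi_d'|^2\ll |\mathcal{P}|^4/q+q^3|\mathcal{P}|^2$, while the second is handled by the trivial estimate $\sum_d|\Pi_d'|\leq|\widetilde{P}|\leq|\mathcal{P}|^2$, since the $\Pi_d'$ are disjoint subsets of $\widetilde{P}$. Substituting $q=p^3$ then turns the first contribution into $p^{-3}\bigl(|\mathcal{P}|^4/q+q^3|\mathcal{P}|^2\bigr)=|\mathcal{P}|^4/q^2+q^2|\mathcal{P}|^2$, and the second into $p^5|\mathcal{P}|^2\ll q^2|\mathcal{P}|^2$ (as $p^5<p^6=q^2$), yielding the claimed bound $|Q'(\mathcal{P})|\ll|\mathcal{P}|^4/q^2+q^2|\mathcal{P}|^2$.

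Since the genuinely hard analytic work has been front-loaded into Proposition \ref{proposition4.2.14} and Lemma \ref{lemma4.2.15}, the only new ingredient is the observation that membership in $\widetilde{P}$ forces $\|x-y\|$ to be a unit. I expect this to be the main point to get right: it is precisely where the choice of a $4n+3$ prime is used, and without it the sum would pick up contributions from $d\notin\Z_q^*$, where Proposition \ref{proposition4.2.14} (phrased for $d\in\Z_q^*$) does not apply and the clean collapse onto Lemma \ref{lemma4.2.15} would break down.
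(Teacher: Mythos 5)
Your proposal is correct and follows essentially the same route as the paper: decompose $|Q'|=\sum_{d\in\Z_q^*}|Q_d'|$, apply Proposition \ref{proposition4.2.14} to each $Q_d'$, control $\sum_d|\Pi_d'|^2$ by Lemma \ref{lemma4.2.15} and $\sum_d|\Pi_d'|\leq|\mathcal{P}|^2$ trivially, and absorb the lower-order term $q^{5/3}|\mathcal{P}|^2$ into $q^2|\mathcal{P}|^2$. Your explicit verification that $\|x-y\|\in\Z_q^*$ for $(x,y)\in\widetilde{P}$ (via $p\equiv3\bmod4$) is a detail the paper leaves implicit in the phrase ``by definition of $Q'$,'' and it is a worthwhile addition, but it does not change the argument.
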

\begin{proof}
By definition of $Q'$, we know that
\[|Q'|=\sum_{d\in\Z_q^*}|Q_d'|.\]
Together with Proposition \ref{proposition4.2.14} and Lemma \ref{lemma4.2.15}, we get
\begin{equation*}
\begin{split}
|Q'|\ll\sum_{d\in\Z_q^*}\left(\frac{{|\Pi_d'|}^2}{q}+q^{\frac{5}{3}}|\Pi_d'|\right)\ll\frac{{|\mathcal{P}|}^4}{q^2}+q^2{|\mathcal{P}|}^2+q^{\frac{5}{3}}{|\mathcal{P}|}^2\ll\frac{{|\mathcal{P}|}^4}{q^2}+q^2{|\mathcal{P}|}^2.
\end{split}
\end{equation*}
\end{proof}
Finally, everything is ready. Let us demonstrate the proof of our main goal.
\begin{proof}[Proof of Theorem \ref{main}]
For a non-isotropic line $l\in B(\mathcal{P})$, define its multiplicity $w(l)$ to be the number of pairs $(x,y)\in\mathcal{P}^2$ with $B(x,y)=l$. Then we know that 
\[\sum_{l\in B(P)}w(l)=\left|\widetilde{\mathcal{P}}\right|.\]
Since $|\mathcal{P}|\gg q^{\frac{5}{3}}$, $\left|\widetilde{\mathcal{P}}\right|=\Theta\left({|\mathcal{P}|}^2\right)$. Therefore, by Cauchy-Schwarz inequality, we have
\[{|\mathcal{P}|}^4\ll {\left(\sum_{l\in B(\mathcal{P})}w(l)\right)}^2\leq |B(\mathcal{P})|\left(\sum_{l\in B(\mathcal{P})}{w(l)}^2\right)=|B(\mathcal{P})||Q'(\mathcal{P})|.\]
Together with Lemma \ref{lemma4.2.16}, we have
\[|B(\mathcal{P})|\gg\frac{{|\mathcal{P}|}^4}{\frac{{|\mathcal{P}|}^4}{q^2}+q^2{|\mathcal{P}|}^2}\Rightarrow|B(\mathcal{P})|\gg q^{-2}{|\mathcal{P}|}^2\gg q^{2(m-1)}.\]
\end{proof}

\section*{Appendix A. Proof of Szemer\'{e}di-Trotter Theorem}
\begin{recall*}[Szemer\'{e}di-Trotter Theorem]
On Euclidean plane $\mathbb{R}^2$, given a point set $\mathcal{P}$ and a line set $\mathcal{L}$, then the number of their incidences
\[\mathcal{I}(\mathcal{P},\mathcal{L})=O\left({|\mathcal{P}|}^{\frac{2}{3}}{|\mathcal{L}|}^{\frac{2}{3}}+|\mathcal{P}|+|\mathcal{L}|\right).\]
\end{recall*}
Theorem \ref{Szemeredi-Trotter} is first proved by Szemer\'{e}di and Trotter in 1983. But the technique is so complicated, so we decide to use another way to prove it. The following proof is provided by Sz\'{e}kely (1997, \cite{Szekely1997crossing}).
\begin{proof}[Proof of Theorem \ref{Szemeredi-Trotter}]
First, we may discard lines that contain two or fewer points since they can provide at most $2|\mathcal{L}|$ incidences in total. For any line $l\in\mathcal{L}$ containing $k_l$ points in $\mathcal{P}$, these $k_l$ points will cut the line into $k_l-1$ segments. 
\par
Consider a graph $G=(V, E)$ with $V=\mathcal{P}$ and for any $x,y\in V$, $\{x,y\}\in E$ if and only if $x,y$ are two endpoints of a segment mentioned above. Notice that any two lines intersect in at most one point, so the crossing number of $G$ is at most $\frac{|\mathcal{L}|(|\mathcal{L}|-1)}{2}$. By the crossing number inequality, we know that 
\begin{equation}\label{eq33}
\frac{|\mathcal{L}|(|\mathcal{L}|-1)}{2}\geq\frac{{|E|}^3}{64{|V|}^2}\text{ or }|E|\leq4|\mathcal{P}|.
\end{equation}
By construction, we know that 
\[|E|=\sum_{l\in\mathcal{L}}(k_l-1)\geq\frac{1}{2}\sum_{l\in\mathcal{L}}k_l=\frac{1}{2}\mathcal{I}(\mathcal{P},\mathcal{L}).\]
Together with Equation \eqref{eq33}, we have 
\[\mathcal{I}(\mathcal{P},\mathcal{L})\leq2|E|\ll|\mathcal{P}|+{|\mathcal{L}|}^{\frac{2}{3}}{|\mathcal{P}|}^{\frac{2}{3}}.\]
This completes our proof.
\end{proof}
\section*{Appendix B. Proof of Szemer\'{e}di-Trotter Type Theorem in Finite Field}
\begin{recall*}[Szemer\'{e}di-Trotter type Theorem in $\mathbb{F}_p$, \cite{Stevens2017improved}]
On $\mathbb{F}_p^2$ where $p$ is a prime, given a point set $\mathcal{P}=A\times B$ and a line set $\mathcal{L}$ with 
\[|A|\leq|B|,|A|{|B|}^2\leq {|\mathcal{L}|}^3\text{, and }|A||\mathcal{L}|\ll p^2\]
then the number of their incidences
\[\mathcal{I}(A\times B,\mathcal{L})=O\left({|A|}^{\frac{3}{4}}{|B|}^{\frac{1}{2}}{|\mathcal{L}|}^{\frac{3}{4}}+|\mathcal{L}|\right).\]
\end{recall*}
\begin{proof}[Proof of Theorem \ref{Szemeredi-Trotter_in_Fp}]
First, because of the order of the upper bound, we can modify $\mathcal{L}$ as follows: (In other words, we can remove or add these lines without affecting the correctness of the statement.)
\begin{enumerate}
\item Remove all vertical lines in $\mathcal{L}$:\\
For those vertical lines in $\mathcal{L}$, they at most contribute $|A||B|$ incidences. Since $|A|{|B|}^2\leq {|\mathcal{L}|}^3$, $|A||B|\leq {|A|}^{\frac{3}{4}}{|B|}^{\frac{1}{2}}{|\mathcal{L}|}^{\frac{3}{4}}$. Thus, the incidence made by vertical lines will not play a crucial role.
\item We may assume ${|B|}^2\leq|A||\mathcal{L}|$:\\
Since there is no vertical line, we have $\mathcal{I}(A\times B,\mathcal{L})\leq|A||\mathcal{L}|$. Assume ${|B|}^2>|A||\mathcal{L}|$. Then it is clear that
\[\mathcal{I}(A\times B,\mathcal{L})\leq|A||\mathcal{L}|\leq {|A|}^{\frac{3}{4}}{|B|}^{\frac{1}{2}}{|\mathcal{L}|}^{\frac{3}{4}}\]
and thus the Theorem \ref{Szemeredi-Trotter_in_Fp} holds.
\item At most ${|A|}^{\frac{1}{2}}{|\mathcal{L}|}^{\frac{1}{2}}$ lines are parallel or concurrent (i.e. passing the same point):\\
We iteratively remove a set of parallel or concurrent lines with a size greater than ${|A|}^{\frac{1}{2}}{|\mathcal{L}|}^{\frac{1}{2}}$. Suppose that in $i$-th step, we remove $n_i$ lines. Then, these $n_i$ lines contribute at most $|A||B|+n_i$ incidences and we need at most $\frac{|\mathcal{L}|}{{|A|}^{\frac{1}{2}}{|\mathcal{L}|}^{\frac{1}{2}}}=\frac{{|\mathcal{L}|}^{\frac{1}{2}}}{{|A|}^{\frac{1}{2}}}$ steps. After these steps, we remove at most
\[\frac{{|\mathcal{L}|}^{\frac{1}{2}}}{{|A|}^{\frac{1}{2}}}\times|A||B|+\sum n_i\ll{|A|}^{\frac{3}{4}}{|B|}^{\frac{1}{2}}{|\mathcal{L}|}^{\frac{3}{4}}+|\mathcal{L}|\]
incidences since ${|B|}^2\leq|A||\mathcal{L}|$.
\end{enumerate}
Since $\mathcal{L}$ has no vertical lines, the affine dual 
\[\mathcal{L}^*:=\{(c,d)\in\mathbb{F}_p^2\mid l:y=cx+d\in\mathcal{L}\}\]
of $\mathcal{L}$ is well-defined. Then, we have the relation
\[\mathcal{I}(\mathcal{P},\mathcal{L})=\left|\{(a,b,c,d)\in A\times B\times\mathcal{L}^*\mid b=ca+d\}\right|.\]
If we set 
\[E:=\{(a,c,d,a',c',d')\in {(A\times\mathcal{L}^*)}^2\mid ca+d=c'a'+d'\},\]
then by Cauchy-Schwarz inequality and the above relation, we have
\[\mathcal{I}(\mathcal{P},\mathcal{L})\leq {|B|}^{\frac{1}{2}}{|E|}^{\frac{1}{2}}.\]
Next, we will bound $|E|$ with Theorem \ref{theorem3.3.1}. Define a point set and a plane set by 
\[\mathcal{P}'=\{(a,c,d)\in A\times\mathcal{L}^*\},\Pi=\{\pi:cx+d=ay+z\mid a\in A,(c,d)\in\mathcal{L}^*\}.\]
Then, we have $|\mathcal{P}'|=\Pi=|A||\mathcal{L}|$ and $|E|=\mathcal{I}(\mathcal{P}',\Pi)$. Now, via our modification above, $\mathcal{P}'$ and $\mathcal{L}$ satisfy the condition mentioned in Theorem \ref{theorem3.3.1} with 
\[k\leq {|A|}^{\frac{1}{2}}{|\mathcal{L}|}^{\frac{1}{2}}.\]
Hence, by Theorem \ref{theorem3.3.1}, 
\[\mathcal{I}(\mathcal{P}',\Pi)\ll {|A|}^{\frac{3}{2}}{|\mathcal{L}|}^{\frac{3}{2}}.\]
To sum up, we know that
\[\mathcal{I}(\mathcal{P},\mathcal{L})\leq {|B|}^{\frac{1}{2}}{|E|}^{\frac{1}{2}}\ll {|A|}^{\frac{3}{4}}{|B|}^{\frac{1}{2}}{|\mathcal{L}|}^{\frac{3}{4}},\]
which completes the proof.
\end{proof}
\section*{Appendix C. Proofs of Lemmas Describing Properties about \texorpdfstring{$\Z_q$}{}}
In this Appendix, we will introduce Lemma C.1 to Lemma C.3 and use them to prove Lemma \ref{lemma4.2.5} to Lemma \ref{lemma4.2.8} in Section 4.2.
\begin{aplemma}[C.1]\label{lemmaC.1}
Suppose that $x\in\Z_q^2$ and $\mathcal{S}$ is a reflection that does not fix $x$. Then the fixed line of $\mathcal{S}$ is $B(x,\mathcal{S}(x))$. Moreover, a good line $l$ is the fixed line of a unique reflection if and only if it is non-isotropic. If $y\in\Z_q^2$ is any point such that $\|x-y\|\not\equiv0$ and the line passing $x$ and $y$ is good, then $B(x,y)$ is non-isotropic, and there is a unique reflection $\mathcal{S}$ such that $\mathcal{S}(x)=y$ which fixes $B(x,y)$. 
\end{aplemma}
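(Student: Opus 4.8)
The plan is to derive the entire lemma from one elementary fact: every reflection $\mathcal{S}$ is an isometry for the form $\|\cdot\|$, meaning $\|\mathcal{S}(v)-\mathcal{S}(w)\|=\|v-w\|$. Writing $\mathcal{S}(v)=S(v-u)+u$ with $S=\begin{pmatrix}a&b\\b&-a\end{pmatrix}$ and $a^2+b^2=1$, one has $\mathcal{S}(v)-\mathcal{S}(w)=S(v-w)$, and expanding shows $\|Sw\|=(a^2+b^2)\|w\|=\|w\|$. This isometry property is exactly what forces fixed lines and perpendicular bisectors to coincide, so I would prove it first and use it repeatedly.

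For the first assertion, set $y=\mathcal{S}(x)$ with centre $u$. Since $\mathcal{S}(u)=u$, the isometry property gives $\|u-x\|=\|\mathcal{S}(u)-\mathcal{S}(x)\|=\|u-y\|$, so $u\in B(x,y)$; and for any fixed point $z$ the same computation gives $\|z-x\|=\|z-y\|$, so the fixed set lies inside $B(x,y)$. To promote this to equality I would match the normal directions. The fixed set is $\{v:(S-I)(v-u)=0\}$, and since $\det(S-I)=1-(a^2+b^2)=0$ while $S-I=\begin{pmatrix}a-1&b\\b&-a-1\end{pmatrix}$ is symmetric of rank one, the fixed line has normal $(a-1,b)$ and passes through $u$. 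On the other hand $y-x=(S-I)(x-u)$ lies in the column space of $S-I$, hence is parallel to $(a-1,b)$; and $B(x,y)$ is the line $2(y-x)\cdot z=\|y\|-\|x\|$, whose normal is $y-x$. Two genuine lines through $u$ with the same normal agree, which settles this part.

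For the remaining assertions I would record the bisector equation above: its normal coefficients are $\alpha=2(y_1-x_1)$, $\beta=2(y_2-x_2)$ with $\alpha^2+\beta^2=4\|y-x\|$. When the line through $x$ and $y$ is good the difference $y-x$ has both coordinates in $\Z_q^\ast$, and because $p\equiv 3\bmod 4$ makes $-1$ a non-residue, $\|y-x\|=(y_1-x_1)^2+(y_2-x_2)^2$ is then a unit (this is precisely where the hypotheses $\|x-y\|\not\equiv0$ and $p\equiv 3\bmod4$ convert a nonzero norm into an invertible one); thus $\alpha,\beta,\alpha^2+\beta^2\in\Z_q^\ast$ and $B(x,y)$ is non-isotropic. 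To build the reflection I would run the construction backwards: a non-isotropic line with normal $(a,b)$ and direction $d=(-b,a)$, so that $\|d\|=a^2+b^2\in\Z_q^\ast$, admits the matrix
\[
S=\frac{1}{a^{2}+b^{2}}\begin{pmatrix} b^{2}-a^{2} & -2ab\\ -2ab & a^{2}-b^{2}\end{pmatrix},
\]
which one checks is of reflection type, fixes $d$, and negates $(a,b)$; taking the centre to be the midpoint of $x$ and $y$ produces a reflection with $\mathcal{S}(x)=y$ whose fixed line is $B(x,y)$. Uniqueness, and hence the equivalence in the second assertion, follows by imposing that $(S-I)$ have column space $(a,b)$: writing $(\alpha-1,\beta)=\lambda(a,b)$ and $\alpha^2+\beta^2=1$ reduces to $\lambda(2a+\lambda(a^2+b^2))=0$, which—exactly because $a^2+b^2$ is a unit—has the single unit solution $\lambda=-2a/(a^2+b^2)$, pinning down $S$, and the centre is then forced onto the line.

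The delicate part is not any single identity but the bookkeeping imposed by the zero divisors of $\Z_{p^3}$: at each step I must track whether a coefficient, a direction, or a determinant is a unit or a multiple of $p$, since only genuine lines (those carrying a unit among their normal coordinates, with exactly $q$ points) behave like lines, and divisions such as $\lambda=-2a/(a^2+b^2)$ are legitimate only in the unit case. The non-isotropy hypothesis is precisely the invertibility of $a^2+b^2$, and it is the hinge on which both the existence of the reflection formula and the uniqueness of $\lambda$ turn; pinning down where $p\equiv 3\bmod 4$ enters—namely to guarantee $a^2+b^2\in\Z_q^\ast$ as soon as the direction has unit coordinates—is the main conceptual point, and the place where the argument genuinely departs from the field case.
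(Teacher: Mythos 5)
Your proposal follows the same skeleton as the paper's proof: the isometry identity $\|\mathcal{S}(v)-\mathcal{S}(w)\|=\|v-w\|$ gives containment of the fixed line in $B(x,\mathcal{S}(x))$; an explicit reflection matrix gives existence (your matrix, written in terms of the normal $(a,b)$, is exactly the paper's matrix $\frac{1}{d_1^2+d_2^2}\begin{pmatrix}d_1^2-d_2^2&2d_1d_2\\2d_1d_2&d_2^2-d_1^2\end{pmatrix}$ rewritten in terms of the direction $d=(-b,a)$); and $p\equiv3\pmod4$ is what turns ``good'' into ``non-isotropic.'' You diverge in the sub-steps the paper compresses. You upgrade containment to equality by matching normals, where the paper only asserts ``the fixed part is a line, so it equals the bisector.'' Your uniqueness argument is algebraic: parametrizing candidate reflection matrices by $(\alpha-1,\beta)=\lambda(a,b)$ and showing $\lambda(2a+\lambda(a^2+b^2))=0$ forces the single nonzero solution $\lambda=-2a/(a^2+b^2)$, because when $\lambda$ is a nonzero zero divisor the factor $2a+\lambda(a^2+b^2)$ is a unit. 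The paper instead argues that $\mathcal{S}\circ\mathcal{S}'$ is a rotation or translation fixing a line pointwise, hence the identity; your route makes visible exactly where non-isotropy enters, while the paper's is shorter but leans on a classification of isometries that itself needs checking over $\Z_q$. You also verify $\mathcal{S}(x)=y$ via the midpoint construction, a step the paper omits entirely.

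One claim in your write-up needs repair --- though the paper's own proof needs the same repair. Over $\Z_{p^3}$ the row $(a-1,b)$ of $S-I$ need not be primitive even though $a^2+b^2=1$: for example $a=1-p^2/2$, $b=p$ defines a legitimate reflection matrix, and there $(a-1,b)=(-p^2/2,\,p)$ equals $-\frac{p}{2}$ times the primitive row $(p,\,-2+p^2/2)$, so the fixed line's normal is the latter, and the solution set of $(a-1)(z-u)_1+b(z-u)_2=0$ is strictly larger than the fixed line. The same phenomenon can make $y-x=(S-I)(x-u)$ a non-unit multiple of the primitive normal (in the example above, choose $x-u=(0,p^2)$, giving $y-x\equiv(0,-2p^2)$); then $B(x,\mathcal{S}(x))$ is not a genuine line at all --- it has $p^5$ points rather than $q$ --- and the first assertion of the lemma, read set-theoretically, fails. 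So your comparison-of-normals step (and, equally, the paper's containment-implies-equality step) is valid exactly when $y-x$ is a unit times a primitive vector. That is automatic in the third assertion, where goodness of the line through $x$ and $y$ gives $y-x$ unit coordinates (provided one reads ``the line through $x$ and $y$'' as presupposing uniqueness of that line, which itself forces $y-x$ primitive), but in the first assertion it is an unstated hypothesis; you should either add it explicitly or restrict the equality claim to that case.
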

\begin{proof}
Observe that if $u$ is fixed by $\mathcal{S}$ then 
\[\|x-u\|=\|\mathcal{S}(x)-\mathcal{S}(u)\|=\|\mathcal{S}(x)-u\|\]
so that $u\in B\big(x,\mathcal{S}(x)\big)$. Additionally, the fixed part of $\mathcal{S}$ formed a line, so the fixed part of $\mathcal{S}$ is $B\big(x,\mathcal{S}(x)\big)$. 
\par
Let $u_1$ and $u_2$ be any distinct points on the line $l$, which is assumed to be non-isotropic. Set $d=(d_1,d_2):=u_1-u_2$. The reflection $\mathcal{S}$ by 
\[\frac{1}{d_1^2+d_2^2}\begin{pmatrix}d_1^2-d_2^2&2d_1d_2\\2d_1d_2&d_2^2-d_1^2\end{pmatrix}\]
about $u_1$ fixes $l$. If there is another reflection $\mathcal{S}'$ fixing $l$, then $\mathcal{S}\circ\mathcal{S}'$ would be either a rotation or a translation and $\mathcal{S}\circ\mathcal{S}'$ fixes a line, so $\mathcal{S}=\mathcal{S}'$. In other words, such reflection is unique. 
\par
Finally, suppose that $y\in\Z_q^2$ is distinct from $x$ with the line $l$ passing $x$ and $y$ being good. Since $p\equiv3\mod4$, $B(x,y)$ is always non-isotropic.
\end{proof}
\begin{aplemma}[C.2]\label{lemmaC.2}
Let $x,y\in C_\rho(u)$ for elements $x,y,u\in\Z_q^2$ and $\rho\in\Z_q^*$. There is a unique rotation $\mathcal{R}$ fixing $u$ and sending $x$ to $y$.
\end{aplemma}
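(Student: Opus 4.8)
The plan is to reduce to the origin and then solve directly for the rotation. Translating by $-u$, set $v := x-u$ and $w := y-u$; since $x,y \in C_\rho(u)$ we have $\|v\| = \|w\| = \rho \in \Z_q^*$. A rotation $\mathcal{R}\colon z \mapsto R(z-u)+u$ about $u$ sends $x$ to $y$ precisely when $Rv = w$, where $R = \begin{pmatrix} a & -b \\ b & a\end{pmatrix}$ with $a^2+b^2 = 1$. Thus the statement is equivalent to: there is a unique matrix $R$ of this form with $Rv = w$.

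First I would establish uniqueness, together with a candidate for existence, by treating $a,b$ as unknowns. Writing $v = (v_1,v_2)$ and $w = (w_1,w_2)$, the equation $Rv = w$ becomes the linear system
\[
\begin{pmatrix} v_1 & -v_2 \\ v_2 & v_1 \end{pmatrix} \begin{pmatrix} a \\ b \end{pmatrix} = \begin{pmatrix} w_1 \\ w_2 \end{pmatrix}.
\]
The determinant of the coefficient matrix is exactly $v_1^2 + v_2^2 = \|v\| = \rho$, which is a unit in $\Z_q$. Hence the coefficient matrix is invertible over $\Z_q$ and the system has a unique solution, namely $a = (v_1 w_1 + v_2 w_2)/\rho$ and $b = (v_1 w_2 - v_2 w_1)/\rho$. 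This already yields uniqueness of $\mathcal{R}$, since any rotation sending $x$ to $y$ must solve this system. It then remains to check that the solution genuinely defines a rotation, i.e. that $a^2+b^2=1$. Here I would invoke the Brahmagupta--Fibonacci two-square identity,
\[
(v_1 w_1 + v_2 w_2)^2 + (v_1 w_2 - v_2 w_1)^2 = (v_1^2 + v_2^2)(w_1^2 + w_2^2) = \rho\cdot\rho,
\]
so that $a^2 + b^2 = \rho^2/\rho^2 = 1$, using $\|w\| = \rho$ as well. Thus $R$ is a bona fide rotation matrix and $\mathcal{R}$ is the desired rotation.

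Conceptually, the clean reason behind the computation is that identifying $\Z_q^2$ with the ring $\Z_q[i] = \Z_q[t]/(t^2+1)$ carries rotation matrices to the norm-$1$ elements $a+bi$, and the norm $N(a+bi) = a^2+b^2$ is multiplicative. Since $\|v\| = \rho$ is a unit, $v$ is invertible in $\Z_q[i]$ with inverse $\bar v/\rho$, so $\zeta := w v^{-1}$ is the unique element with $\zeta v = w$, and its norm equals $N(w)N(v)^{-1} = 1$ automatically. I may include this viewpoint as a one-line remark, but carry out the argument via the explicit linear system to keep it self-contained.

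The only point requiring care --- and the sole place the hypothesis $\rho \in \Z_q^*$ enters --- is the invertibility of the coefficient matrix (equivalently, of $v$ in $\Z_q[i]$); this is exactly what fails for non-unit $\rho$, matching the degenerate circle cardinalities recorded in Lemma \ref{lemma4.2.8}. No special role is played by $p \equiv 3 \pmod 4$ in this lemma, so I expect no genuine obstacle here, only the bookkeeping of the two steps above.
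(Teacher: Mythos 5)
Your proposal is correct and follows essentially the same route as the paper's proof: translate $u$ to the origin, set up the linear system $\begin{pmatrix} v_1 & -v_2 \\ v_2 & v_1 \end{pmatrix}\begin{pmatrix} a \\ b\end{pmatrix}=\begin{pmatrix} w_1 \\ w_2\end{pmatrix}$, use the fact that the determinant $\|v\|=\rho$ is a unit to get a unique solution, and verify $a^2+b^2=1$ via the multiplicativity of the norm (your Brahmagupta--Fibonacci identity is the same computation the paper performs in the form $a^2+b^2=\tfrac{1}{\rho}\left((av_1-bv_2)^2+(bv_1+av_2)^2\right)=1$). If anything, your observation that uniqueness follows directly from the invertibility of the coefficient matrix is slightly cleaner than the paper's supplementary argument composing $R^{-1}\circ R'$.
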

\begin{proof}
After applying a translation, we may assume $u=0$. Then, we have $\|x\|=\|y\|\in\Z_q^*$. Let $x=(x_1,x_2)$ and $y=(y_1,y_2)$. Solve a rotation matrix $R$ with
\[Rx=y\Rightarrow\begin{pmatrix}a&-b\\b&a\end{pmatrix}\begin{pmatrix}x_1\\x_2\end{pmatrix}=\begin{pmatrix}y_1\\y_2\end{pmatrix}\Rightarrow\begin{pmatrix}x_1&-x_2\\x_2&x_1\end{pmatrix}\begin{pmatrix}a\\b\end{pmatrix}=\begin{pmatrix}y_1\\y_2\end{pmatrix}.\]
Since $\|x\|\in\Z_q^*$, there is a unique solution $(a,b)$. Additionally, 
\[a^2+b^2=\frac{1}{\rho}\left({(ax_1-bx_2)}^2+{(bx_1+ax_2)}^2\right)=\frac{1}{\rho}(y_1^2+y_2^2)=1.\]
Thus, such a rotation exists. If there is another rotation $R'$ with the same property, then the rotation $R^{-1}\circ R'$ fixes $x$ and $u$, so $R=R'$. That is, such rotation is unique.
\end{proof}
\begin{aplemma}[C.3]\label{lemmaC.3}
Suppose that $x,y,z,w\in\Z_q^2$ such that $(x,y)\neq(z,w)$ and 
\[\|x-y\|=\|z-w\|\in\Z_q^*.\]
If $x-y\neq z-w$, then there is a unique rotation $\mathcal{R}$ with $\mathcal{R}(x)=z$ and $\mathcal{R}(y)=w$. If $x-y=z-w$, then there is no rotation $\mathcal{R}$ with $\mathcal{R}(x)=z$ and $\mathcal{R}(y)=w$.
\end{aplemma}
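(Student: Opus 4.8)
The plan is to reduce the problem to the linear (matrix) part first, pin that part down with Lemma~\ref{lemmaC.2}, and then recover the centre of the rotation by a short computation; the only delicate point will be the solvability of that final linear system. Write any rotation as $\mathcal{R}(v)=R(v-u)+u=Rv+(I-R)u$, where $R=\begin{pmatrix}a&-b\\b&a\end{pmatrix}$ with $a^2+b^2=1$ and $u$ is its centre. Subtracting the two desired identities $\mathcal{R}(x)=z$ and $\mathcal{R}(y)=w$ eliminates the translation and yields $R(x-y)=z-w$. Since $\|x-y\|=\|z-w\|=d\in\Z_q^{*}$, both vectors $x-y$ and $z-w$ lie on the circle $C_d(0)$, so Lemma~\ref{lemmaC.2} (applied with centre $0$) supplies a \emph{unique} rotation matrix $R$ with $R(x-y)=z-w$. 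This fixes the linear part; the translation is then forced to be $c:=z-Rx$, and a one-line check using $R(x-y)=z-w$ shows $Ry+c=w$ automatically. Hence there is at most one affine map $v\mapsto Rv+c$ meeting both conditions, which already settles uniqueness in every case.

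For the case $x-y=z-w$ the matrix $R$ must fix the unit-norm vector $x-y$; as $I$ does so, the uniqueness in Lemma~\ref{lemmaC.2} forces $R=I$. But then $\mathcal{R}$ is the identity map, so $\mathcal{R}(x)=z$ and $\mathcal{R}(y)=w$ give $z=x$ and $w=y$, contradicting $(x,y)\neq(z,w)$. Thus no rotation exists, which is the second assertion.

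It remains, in the case $x-y\neq z-w$, to verify that the forced affine map $v\mapsto Rv+c$ is genuinely a rotation, i.e. that its fixed-point equation $(I-R)u=c=z-Rx$ has a solution $u$ (equivalently, that $u$ lies in $B(x,z)\cap B(y,w)$). Here $R\neq I$, and a direct computation gives $\det(I-R)=(1-a)^2+b^2=2(1-a)$. Since $p$ is odd, $I-R$ is invertible over $\Z_q$ exactly when $a\not\equiv 1\pmod p$, in which case there is a unique centre $u$ and we are done.

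The hard part will be the degenerate case $a\equiv 1\pmod p$ (equivalently, when $B(x,z)$ and $B(y,w)$ fail to meet transversally). There one finds $I-R=pM$ with $M$ invertible, so $\mathrm{Im}(I-R)=p\Z_q^{2}$ and a centre exists only if $z\equiv Rx\equiv x\pmod p$; this congruence does \emph{not} follow from the stated hypotheses alone, so the singular configurations must be excluded or handled separately. I expect this is precisely where the non-isotropy conditions appearing in Lemma~\ref{lemmaC.1} enter: one should either impose non-isotropy on the relevant bisector to force $a\not\equiv 1\pmod p$ (hence $I-R$ invertible), or dispose of the residual singular cases by hand. Controlling this degenerate case is the crux of the argument, while everything else reduces to Lemma~\ref{lemmaC.2} and elementary linear algebra over $\Z_q$.
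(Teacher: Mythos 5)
Your uniqueness argument and your disposal of the case $x-y=z-w$ are correct, and are essentially a streamlined version of what the paper does (the paper reaches the same conclusions by composing with the translation by $z-x$ and invoking Lemma~\ref{lemmaC.2} at the centre $z$ rather than at $0$). The gap you flag in the existence step, however, is not a defect of your approach that a cleverer argument will remove: it is a defect of the statement itself under the paper's definition of rotation. Take any prime $p\equiv3\pmod4$ and
\[x=(1,0),\quad y=(0,0),\quad w=(1,1),\quad z=(2,1+p^2).\]
Then $\|x-y\|=1=\|z-w\|$ in $\Z_q^*$ and $x-y=(1,0)\neq(1,p^2)=z-w$, so the lemma promises a rotation. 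But, exactly as in your proposal, the linear part of any such rotation is forced by Lemma~\ref{lemmaC.2} to be $R=\begin{pmatrix}1&-p^2\\p^2&1\end{pmatrix}$ (note $1+p^4\equiv1\bmod p^3$, so this is a legitimate rotation matrix), and the centre equation $(I-R)u=z-Rx=(1,1)$ is unsolvable because $\mathrm{Im}(I-R)=p^2\Z_q^2$. So no rotation about any centre sends $(x,y)$ to $(z,w)$: your ``degenerate case'' is a genuine counterexample, not a removable technicality.

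The paper's own proof glosses over precisely this point: it asserts that $\mathcal{R}'=\mathcal{R}\circ\mathcal{T}$ ``is the desired rotation,'' but over $\Z_q$ a nontrivial rotation composed with a translation need not admit a centre. This is a clean field-versus-ring phenomenon: over $\F_p$ a rotation matrix $R\neq I$ automatically has $a\neq1$ (else $b^2=1-a^2=0$ forces $b=0$), so $\det(I-R)=2(1-a)\neq0$ and the composition always re-centres; over $\Z_{p^3}$ there exist rotation matrices $R\neq I$ with $R\equiv I\pmod p$, and for those the re-centring can fail. Note also that the obvious repair --- redefining ``rotation'' as any affine map whose linear part is a rotation matrix --- rescues existence (your forced map $v\mapsto Rv+(z-Rx)$ then qualifies) but destroys the second half of the lemma, since a nontrivial translation would then count as a rotation and would handle the case $x-y=z-w$. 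A correct version must either add the hypothesis $x-y\not\equiv z-w\pmod p$ (which, given that $\|x-y\|$ is a unit, is equivalent to $a\not\equiv1\pmod p$ and hence to the invertibility of $I-R$), or adopt the relaxed notion of rotation while explicitly excluding pure translations; either choice propagates to the statements downstream that rely on this lemma.
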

\begin{proof}
Divide it into two cases:
\begin{enumerate}
\item Assume $x-y\neq z-w$. Let $\mathcal{T}$ be the translation by $z-x$. i.e. $\mathcal{T}(x)=z$. Note that 
\[\|z-w\|=\|x-y\|=\|\mathcal{T}(x)-\mathcal{T}(y)\|=\|z-\mathcal{T}(y)\|.\]
Thus, $\mathcal{T}(y)$ and $w$ are on a common circle with an invertible radius and are centered at $z$. By Lemma \ref{lemmaC.2}, there is a non-trivial rotation $\mathcal{R}$ with $\mathcal{R}\circ\mathcal{T}(y)=w$. Then $\mathcal{R}'=\mathcal{R}\circ\mathcal{T}$ is the desired rotation. Additionally, if there is another non-trivial rotation $\mathcal{R}''$ with $\mathcal{R}''(x)=z$ and $\mathcal{R}''(y)=w$.
Then, ${\mathcal{R}'}^{-1}\circ\mathcal{R}''$ is a rotation fix $x,y$. In other words, the rotation is unique.
\item Assume $x-y=z-w$. Similarly, let $\mathcal{T}$ be the translation by $z-x$. i.e. $\mathcal{T}(x)=z$ and $\mathcal{T}(y)=w$. Assume that $\mathcal{R}$ is a rotation with $\mathcal{R}(x)=z$ and $\mathcal{R}(y)=w$. Then, $\mathcal{R}^{-1}\circ\mathcal{T}$ is a non-trivial rotation fixing $x,y$, which leads to a contradiction.
\end{enumerate}
\end{proof}
\begin{proof}[Proof of Lemma \ref{lemma4.2.5}]
Let $x=x_1p^2+x_2p+x_3$ with $x_i\in\F_p$. Notice that 
\[x^2\equiv(2x_1x_3+{x_2}^2)p^2+2x_2x_3p+{x_3}^2\mod q.\]
Divide it into six cases:
\begin{enumerate}
\item If $\legendre{d}{p}=1$, then there are two nonzero solutions to ${x_3}^2\equiv d\mod p$. Next, fixed $x_3$, there is a unique solution for $x_2$ to 
\[2x_2x_3+\left\lfloor\frac{{x_3}^2}{p}\right\rfloor\equiv\left\lfloor\frac{d}{p}\right\rfloor\mod p.\]
In the end, fixed $x_2,x_3$, there is also a unique solution for $x_1$ to 
\[2x_1x_3+{x_2}^2+\left\lfloor\frac{2x_2x_3p+{x_3}^2}{p^2}\right\rfloor\equiv\left\lfloor\frac{d}{p^2}\right\rfloor\mod p.\]
To sum up, there are two solutions to $x^2\equiv d\mod q$.
\item If $\legendre{d}{p}=-1$, then there is no solution to ${x_3}^2\equiv d\mod p$ and thus $Q(d)=0$.
\item If $p\mid d$ and $p^2\nmid d$, then note that 
\[x^2\equiv2x_2x_3p+{x_3}^2\equiv d\mod p^2.\]
Since $x_3=0$, $d\equiv0\mod p^2$. This results in a contradiction, so $Q(d)=0$.
\item If $p^2\mid d$ and $\legendre{d/p^2}{p}=1$, then solve ${x_3}^2\equiv d\mod p$. We know that $x_3=0$. Also, solve
\begin{equation*}
\begin{split}
&2x_1x_3+{x_2}^2+\left\lfloor\frac{2x_2x_3p+{x_3}^2}{p^2}\right\rfloor\equiv\frac{d}{p^2}\mod p\\
&\Rightarrow {x_2}^2\equiv\frac{d}{p^2}\mod p
\end{split}
\end{equation*}
This has 2 solutions for $x_2$; for both values, $x_1$ can take arbitrary value in $\F_p$, so there are $2p$ solutions in total.
\item If $p^2\mid d$ and $\legendre{d/p^2}{p}=-1$, then consider
\begin{equation*}
\begin{split}
&2x_1x_3+{x_2}^2+\left\lfloor\frac{2x_2x_3p+{x_3}^2}{p^2}\right\rfloor\equiv\frac{d}{p^2}\mod p\\
&\Rightarrow {x_2}^2\equiv\frac{d}{p^2}\mod p
\end{split}
\end{equation*}
This has no solution for $x_2$ in $\F_p$. 
\item If $d=0$, then consider
\[(2x_1x_3+{x_2}^2)p^2+2x_2x_3p+{x_3}^2\equiv0\mod q.\]
Then, we have $x_3=0,x_2=0$, and $x_1$ can be arbitrary elements in $\F_p$, so there are $p$ solutions in total.
\end{enumerate}
\end{proof}
\begin{proof}[Proof of Lemma \ref{lemma4.2.6}]
Assume that the bisector $B:ax+by=c$. After rescaling and shifting, we may also assume that $c=0$ and $a^2+b^2=1$. Let $\mathcal{S}$ be the reflection according to $B$. Then, we know that $x=\mathcal{S}(z)$ and $y=\mathcal{B}(w)$, so 
\[\|x-y\|=\|\mathcal{S}(z)-\mathcal{S}(w)\|=\|\mathcal{S}(z-w)\|=\|z-w\|.\]
\end{proof}
\begin{proof}[Proof of Lemma \ref{lemma4.2.7}]
For a good line $l$, its parametric form is $u+t(1,a)$ with $t\in\mathbb{Z}_q$ and $a\in\Z_q$ is a unit. Since there are $p^3-p^2$ units in $\Z_q$, there are $p^3-p^2$ good lines. Furthermore, since $p\equiv3\mod4$, there is no isotropic line.
\end{proof}
\begin{proof}[Proof of Lemma \ref{lemma4.2.8}]
Divide it into three cases:
\begin{enumerate}
\item Suppose that $\rho=0$. Consider the equation
\[x^2+y^2\equiv0\text{ where }x,y\in\Z_q.\]
Let $x=x_1p^2+x_2p+x_3$ and $y=y_1p^2+y_2p+y_3$ with $x_i,y_i\in\F_p$. Then, we get $x_3\equiv y_3\equiv0$, $x_2\equiv y_2\equiv0$, and $x_1,y_1$ can be arbitrary elements in $\F_p$. Thus,
\[|C_0(u)|=p^2\]
\item Suppose that $\rho=n_1p^2+n_2p$ with $n_i\in\F_p$ and at least one of them are non-zero. Consider the equation
\[x^2+y^2\equiv\rho\text{ where }x,y\in\Z_q.\]
Let $x=x_1p^2+x_2p+x_3$ and $y=y_1p^2+y_2p+y_3$ with $x_i,y_i\in\F_p$. Then, we get $x_3\equiv y_3\equiv0$, and thus $n_2$ must be 0. To sum up, 
\[|C_\rho(u)|=\begin{cases}p^3+p^2&\text{if }n_2\equiv0\\0&\text{if }n_2\not\equiv0.\end{cases}\]
\item Suppose that $\rho=n_1p^2+n_2p+n_3$ with $n_3\not\equiv0$. Consider the equation
\[x^2+y^2\equiv\rho\text{ where }x,y\in\Z_q.\]
Let $x=x_1p^2+x_2p+x_3$ and $y=y_1p^2+y_2p+y_3$ with $x_i,y_i\in\F_p$. Then, there are $p+1$ pairs $(x_3,y_3)$ satisfying 
\[{x_3}^2+{y_3}^2\equiv n_3\mod p;\]
there are $p$ pairs $(x_2,y_2)$ satisfying 
\[x_2x_3+y_2y_3+\left\lfloor\frac{{x_3}^2+{y_3}^2}{p}\right\rfloor\equiv n_2\mod p\] if the rest coefficients are fixed (and $(x_3,y_3)\not\equiv(0,0)$); there are $p$ pairs $(x_1,y_1)$ satisfying 
\[2x_1x_3+2y_1y_3+{x_2}^2+{y_2}^2+\left\lfloor\frac{2x_2x_3p+2y_2y_3p+{x_3}^2+{y_3}^2}{p^2}\right\rfloor\equiv n_1\mod p\]
if the rest coefficients are fixed (and $(x_3,y_3)\not\equiv(0,0)$). To sum up, 
\[|C_\rho(u)|=p^3+p^2.\]
\end{enumerate}
\end{proof}

\end{document}